\DeclareRobustCommand{\SkipTocEntry}[5]{}
\def\HYPER{\relax}
\renewcommand{\href}[2]{\relax}
\renewcommand{\url}[1]{#1}
\numberwithin{equation}{section}
\newcommand{\+}{\nobreak\hspace{.087em}\nopagebreak}
\def\Re{\mathop{\mathrm{Re}}}
\def\Im{\mathop{\mathrm{Im}}}
\def\D{\mathbb D}
\def\H{\mathbb{H}}
\def\R{\mathbb R}
\def\C{\mathbb C}
\def\Hol{{\sf Hol}}
\def\Aut{{\sf Aut}(\mathbb D)}
\def\N{\mathbb N}
\def\Q{\mathbb Q}
\def\const{{\rm const}}
\def\id{{\sf id}}
\newtheorem{theorem}{Theorem}[section]
\newtheorem{lemma}[theorem]{Lemma}
\newtheorem{proposition}[theorem]{Proposition}
\newtheorem{corollary}[theorem]{Corollary}
\theoremstyle{definition}
\newtheorem{definition}[theorem]{Definition}
\newtheorem{example}[theorem]{Example}
\theoremstyle{remark}
\newtheorem{remark}[theorem]{Remark}
\numberwithin{equation}{section}
\newcommand{\hd}{\rho}
\newcommand{\phd}{\rho^*}
\newcommand{\phdisk}{\mathrm B_h}
\newcommand{\Zen}{\mathcal{Z}}
\newcommand{\Zn}{\mathcal{Z}_{\scriptscriptstyle\forall}}
\newcommand{\U}{{\sf Uni}}
\newcommand{\UH}{\mathbb{H}}
\newcommand{\UD}{\mathbb{D}}
\newcommand{\UC}{\partial\UD}
\newcommand{\Complex}{\mathbb{C}}
\newcommand{\Real}{\mathbb{R}}
\newcommand{\Natural}{\mathbb{N}}
\newcommand{\di}{\mathop{\mathrm{d}}\nolimits}
\newcommand{\anglim}{\angle\lim}
\renewcommand{\ge}{\geqslant}
\renewcommand{\le}{\leqslant}
\renewcommand{\geq}{\geqslant}
\renewcommand{\leq}{\leqslant}
\newcommand{\proofof}[1]{{\fontseries{bx}\fontshape{it}\selectfont Proof of #1}}
\newcommand{\StepP}[1]{\medskip\noindent{\textsc{Proof of #1.}}}
\newcommand{\StepC}[2]{\medskip\noindent{\textsc{Case}~#1: #2.}}
\newenvironment{ourlist}{\begin{enumerate}[label={\bf (\arabic*)}, ref={\rm (\arabic*)}, left=0em]
\everydisplay{\makeatletter\def\@eqnum{\normalfont(\theequation)}\makeatother}}{\end{enumerate}}
\newenvironment{romlist}{\begin{enumerate}[label={\rm (\roman*)}, ref={\rm (\roman*)}, left=.7em]}{\end{enumerate}}
\newenvironment{statlist}{\begin{enumerate}[label={\bf (\Alph*)}, ref={(\Alph*)}, left=0em]%
\everydisplay{\makeatletter\def\@eqnum{\normalfont(\theequation)}\makeatother}}{\end{enumerate}}
\newcommand{\dff}[1]{\textsl{#1}}
\newcommand\Wtilde[1]{\stackrel{\sim}{\smash{#1}\rule{0pt}{0.54ex}}}
\newcommand{\mtilde}[1]{\mathchoice{\,\widetilde{\!#1\!}\,}{\,\widetilde{\!#1\!}\,}%
{\raise.25ex\hbox{$\scriptstyle\Wtilde{#1}$}}%
{\Wtilde{#1}}}
\begin{document}
\title[Simultaneous linearization and centralizers of parabolic self-maps I]{Simultaneous linearization and centralizers of parabolic self-maps I: zero hyperbolic step}

\author[M. D. Contreras]{Manuel D. Contreras $^\dag$}

\author[S. D\'{\i}az-Madrigal]{Santiago D\'{\i}az-Madrigal $^\dag$}
\address{Camino de los Descubrimientos, s/n\\
	Departamento de Matem\'{a}tica Aplicada II and IMUS\\
	Universidad de Sevilla\\
	Sevilla, 41092\\
	Spain.}\email{contreras@us.es} \email{madrigal@us.es}

\author[P. Gumenyuk]{Pavel Gumenyuk$^\ddag$}\address{Pavel Gumenyuk: Department of
Mathematics\\  Politecnico di Milano, via E. Bonardi 9\\ Milan 20133, Italy.}
\email{pavel.gumenyuk@polimi.it}

\thanks{$^\dag$ Partially supported by  by Ministerio de Innovación y Ciencia, Spain, project PID2022-136320NB-I00.}
\thanks{$^\ddag$ Partially supported  by GNSAGA INdAM (\textit{Istituto Nazionale di Alta Matematica ``Francesco Severi''}) Italy}

\date\today

\begin{abstract} Let $\varphi:\mathbb  D \to \mathbb  D$ be a parabolic self-map of the unit disc~$\mathbb  D$ having \textit{zero} hyperbolic step. We study holomorphic self-maps of~$\mathbb  D$ commuting with~$\varphi$.  In particular, we answer a question from Gentili and Vlacci (1994) by proving that $\psi\in\mathsf{Hol(\mathbb  D,\mathbb  D)}$ commutes with~$\varphi$ if and only if the two self-maps have the same Denjoy\,--\,Wolff point and $\psi$ is a pseudo-iterate of~$\varphi$ in the sense of Cowen. Moreover, we show that the centralizer of~$\varphi$, i.e. the semigroup $\mathcal Z_\forall(\varphi):=\{\psi:\psi\circ\varphi=\varphi\circ\psi\}$ is  commutative. We also prove that if $\varphi$ is univalent, then all elements of~$\mathcal Z_\forall(\varphi)$ are univalent as well, and if $\varphi$ is not univalent, then the identity map is an isolated point of~$\mathcal Z_\forall(\varphi)$. The main tool is the machinery of \textit{simultaneous linearization}, which we develop using holomorphic models for iteration of non-elliptic self-maps originating in works of Cowen and Pommerenke.
\end{abstract}

\maketitle

\tableofcontents

\section{Introduction}\label{Introduction}

One of the main tools in the study of holomorphic dynamical systems is \textsl{linearization}, i.e.,
semiconjugation to a linear map. This paper is a part of a project, which we continue in~\cite{CDG-positive-h.step}, aimed at analyzing and developing the relationship of what is commonly known in Dynamics as
\textsl{\hbox{simultaneous} \hbox{linearization}} with the study of commuting holomorphic self-maps of the unit disc $\UD:={\{z\in\C\colon |z|<1\}}$.

In his seminal papers~\cite{Cowen,Cowen-comm} Cowen introduced what is now known as \textsl{holomorphic \hbox{(semi-)\,models}} for the iteration of holomorphic self-maps of~$\UD$ and used it to study commuting holomorphic self-maps. Obviously, any two iterates of the same self-map commute. Cowen's main result in~\cite{Cowen-comm} can be regarded as a (weaker form of the) converse statement. Namely, he proved that any two commuting holomorphic self-maps are ``generalized iterates'' of a third self-map, i.e. elements of its \textsl{pseudo-iteration semigroup}; see Section~\ref{S_general_facts} for the definition of this notion.

Much later, Arosio and Bracci~\cite{Canonicalmodel,Simultaneous} developed Cowen's ideas and extended them to holomorphic iteration in higher dimensions and, more generally, on complex manifolds. Noticeably, their abstract point of view appears to be fruitful even for holomorphic self-maps of~$\UD$. In this classical case, the notion of a holomorphic semimodel can be interpreted as a way to model the iteration of a given self-map by the iteration of a linear map with the help of a suitable (in general, non-injective) change of variable ${h:\UD\to\C}$; see Section~\ref{Sec:modelos} for more details.

For a non-elliptic (i.e., having  no fixed point in the open disc~$\UD$) holomorphic self-map ${\varphi:\UD\to\UD}$, the linear map modeling~$\varphi$ is of the form $w\mapsto w+1$. Accordingly, ``linearizing'' the non-elliptic  self-map~$\varphi$ consists of studying Abel's functional equation
\begin{equation}\label{EQ_Abel-intro}
 h\circ\varphi=h+1.
\end{equation}
In general, the solution to~\eqref{EQ_Abel-intro} is not unique. However, it is possible to single out one particular solution playing a special role, and often referred to as the \textsl{Koenigs function} of~$\varphi$; see Definition~\ref{DF_canonical}.
In terms of the Koenigs function~$h_\varphi$ of~$\varphi$, one can characterize elements of Cowen's pseudo-iteration semigroup of~$\varphi$ as those holomorphic self-maps ${\psi:\UD\to\UD}$ for which
\begin{equation}\label{EQ_secondAbel-intro}
  h_{\varphi}\circ\psi=h_{\varphi}+c
\end{equation}
holds with a suitable constant ${c\in\Complex}$, which we will refer to as the \textsl{simultaneous linearization coefficient} and which will be denoted by~$c_{\varphi,\psi}$.

From the algebraic point of view, the classes $\Hol(\UD)$ and $\U(\UD)$ consisting all holomorphic and all univalent (i.e. holomorphic and injective) self-maps ${\varphi:\UD\to\UD}$, respectively, are non-abelian semigroups w.r.t. composition. Therefore, the notion of centralizer naturally comes into play.
\begin{definition} Given a holomorphic self-map of the unit disc $\varphi\in \Hol(\D)$, the {\sl centralizer} of $\varphi $ is the set of all holomorphic self-maps of the unit disc which commute with $\varphi,$ that is,
$$
\Zn(\varphi):=\{\psi \in \Hol (\D): \, \psi\circ\varphi=\varphi\circ \psi\}.
$$
When $\varphi$ is also univalent, that is $\varphi\in \U(\D)$, we denote
$$
\Zen(\varphi):=\{\psi \in \U(\D): \, \psi\circ\varphi=\varphi\circ \psi\}=\Zn(\varphi)\cap \U(\D).
$$
\end{definition}

For hyperbolic\footnote{For the classification of holomorphic self-maps in~$\UD$ and for the related notion of the Denjoy\,--\,Wolff point, see Section~\ref{SS_PRE-holo-selfmaps}.} self-maps $\varphi$ the structure of the centralizer $\Zn(\varphi)$
is rather well understood. For parabolic self-maps, which we consider in this paper and in \cite{CDG-positive-h.step}, the situation is much more complicated and open. In particular, there is a significant difference between parabolic self-maps of zero hyperbolic step and those of positive hyperbolic step, as it becomes clear already in the special case of univalent commuting self-maps, which has been recently studied by the authors in~\cite{CDG-Centralizer} in relation to the much celebrated embeddability problem.

For a parabolic self-map $\varphi$ of zero hyperbolic step, in Section~\ref{S_zero} we show (Theorem~\ref{Prop:pseuso-semigroup}) that a self-map $\psi\in\Hol(\UD)$ sharing with~$\varphi$ the same Denjoy\,--\,Wolff point belongs to~$\Zn(\varphi)$ if and only if the Koenigs map~$h_\varphi$ linearizes $\psi$, i.e. if and only if $\psi$ is in the pseudo-iteration semigroup of~$\varphi$. This result was previously known under certain additional conditions, which we are now able to completely eliminate, answering in this way a question going back to Cowen~\cite{Cowen-comm} and explicitly stated by Gentili and Vlacci~\cite{Gentili-Vlacci}, see Remark~\ref{RM_under-additional-condition}.  A closely related result,  see assertion~\ref{IT_THzero-abelian} of Theorem~\ref{Thm:zero-hyperbolic-step}, states that the centralizer $\Zn(\varphi)$ of any parabolic self-map with zero hyperbolic step is \textit{abelian}, i.e. any two elements ${\psi_1,\psi_2\in\Zn(\varphi)}$ commute. In the same theorem, we also show that if $\varphi$ is univalent, then each $\psi\in\Zn(\varphi)$ is univalent as well.

For parabolic self-maps $\varphi$ of positive hyperbolic step, these results do not hold. The structure of~$\Zn(\varphi)$ in the parabolic-positive case can be much richer and will be analyzed in~\cite{CDG-positive-h.step}. In particular, $\Zn(\varphi)$ is not necessarily abelian even in the univalent case (see \cite[Example 8.5]{CDG-Centralizer}) and, in general, is not contained in the pseudo-iteration semigroup.

Returning to the parabolic-zero case, we further show (Theorem~\ref{Thm:homeomorphism}) that the map assigning to each element of the centralizer ${\psi\in\Zn(\varphi)}$ its simultaneous linearization coefficient~$c_{\varphi,\psi}$ is an isomorphism between the topological semigroup~$\Zn(\varphi)$ and a subsemigroup of~$\C$ regarded as a topological semigroup w.r.t. addition. This allows us to establish another remarkable fact (Theorem~\ref{Thm:embedding}):
if the identity map~$\id_{\D}$ is not isolated in the centralizer~$\Zn(\varphi)$, then it contains a non-trivial continuous one-parameter semigroup~$(\phi_t)_{t\ge0}$ of holomorphic self-maps of the unit disc and then the function $\varphi$ is needfully univalent. If in addition, the self-map~$\varphi$ has a boundary fixed point different from its Denjoy\,--\,Wolff point, then in combination with \cite[Theorem~1.4]{CDG-Embedding}, this result implies that $\varphi$ is contained in the semigroup~$(\phi_t)$ and hence, $\varphi$~is embeddable.

In Section~\ref{S_SLC}, we prove an explicit formula for the simultaneous linearization coefficient\footnote{Previously, this formula was found by the authors for univalent parabolic self-maps in~\cite{CDG-Centralizer}.} (see~\eqref{EQ_SLC} in Theorem~\ref{TH_SLC(2formulas)}), which allows one to determine~$c_{\varphi,\psi}$ directly, i.e. without knowing the Koenigs map of~$\varphi$. We also study the uniqueness questions for simultaneous linearization. For a parabolic self-map $\varphi$ of zero hyperbolic step and any ${\psi\in\Zn(\varphi)}$, we show (Theorem~\ref{TH_SLC(2formulas)}\+\ref{IT_SLC2}) that if a holomorphic map ${h:\UD\to\C}$ linearizes both $\varphi$ and~$\psi$, i.e. ${h\circ\varphi}={h+c_1}$ and ${h\circ\psi}={h+c_2}$ with ${|c_1|^2+|c_2|^2\neq0}$, then ${c_{\varphi,\psi}=c_2/c_1}$. Furthermore, if $c_{\varphi,\psi}$ is not a rational real number, then $h$ necessarily coincides, up to an affine transformation, with the Koenigs map~$h_\varphi$ of~$\varphi$, see Proposition~\ref{PR_SLC(cont)}.  Finally, we establish sufficient conditions on the hyperbolic step and the simultaneous linearization coefficient, under which the commutativity of parabolic self-maps $\varphi$ and~$\psi$ implies that the Koenigs map~$h_\psi$ of~$\psi$ is a multiple of~$h_\varphi$; see Theorem~\ref{TH_step-and-Koenigs}.

The rest of the paper is organized as follows. In Section~\ref{S_prelim}, we introduce the necessary preliminaries on the dynamics of holomorphic self-maps of~$\UD$. Moreover, in Subsection~\ref{Sec:modelos} we prove a few auxiliary results, used in the subsequent sections.

In Section~\ref{S_general_facts}, we explain the relationship between the pseudo-iteration semigroup of a parabolic self-map and simultaneous linearization. We show (Proposition~\ref{PR_affine-part}) that under a light additional condition, the simultaneous linearizability of two parabolic self-maps with the same Denjoy\,--\,Wolff point implies that the self-maps commute. In the same section, we also analyse the case of \textit{real} simultaneous linearization coefficient (Proposition~\ref{Prop:firstpropertiescoefficient}).

The paper is concluded in Section~\ref{S_APP} serving as an appendix, in which we provide a necessary and sufficient condition  in terms of the hyperbolic derivative for a non-elliptic self-map to be of zero hyperbolic step.

\section{Preliminaries and auxiliary results}\label{S_prelim}
Below we introduce some notation and basic theory used further in the paper. For  more details and for the proofs of the previously known results presented in this section without proof, we refer the interested readers to the recent monograph~\cite{Abate2}.

\addtocontents{toc}{\SkipTocEntry}
\subsection{Notation}\label{Notation}
As usual, we denote the unit disc by
${\UD:=\{z\in\C:|z|<1\}}$,
and we write $\UH:={\{w\in\C:\Im w>0\}}$ for the upper half-plane and $\H_{\mathrm R}:={\{w\in\C:\Re w>0\}}$ for the right half-plane.

For any two sets $A$ and $B$, the inclusion $A\subset B$ will be understood in the wide sense, i.e. allowing the equality~${A=B}$ as a special case.

Furthermore, denote by $\Hol(D,E)$ the class of all holomorphic mappings of a domain $D\subset\C$ into a set $E\subset\C$,
and let $\U(D,E)$ stand for the class of all \textit{univalent} (i.e. injective holomorphic) mappings from $D$ to~$E$.  As usual, we endow $\Hol(D,E)$ and $\U(D,E)$ with the topology of locally uniform convergence. In case $E=D$, we will write  $\Hol(D)$ and $\U(D)$ instead of $\Hol(D,D)$ and $\U(D,D)$, respectively.

For a self-map $\varphi:D\to D$ of a domain $D\subset\C$ and ${n\in\Natural}$ we denote by $\varphi^{\circ n}$ the $n$-th iterate of~$\varphi$, and let $\varphi^{\circ0}:=\id_D$, the identity map in $D$. Moreover, if $\varphi$ is an automorphism of~$D$, then for every $n\in\N$, we denote by $\varphi^{\circ-n}$ the $n$-th iterate of~$\varphi^{-1}$.

If $D$ is a hyperbolic domain in the complex plane, we denote by $\hd_{D}$ (resp. $\phd_D$) the hyperbolic distance (resp. pseudohyperbolic distance) in $D$. We write $\phdisk^{D}(z,r)$ for the pseudohyperbolic disc in~$D$ of radius~$r$ centered at~$z$. When $D$ is the with disc, we simply write $\phdisk(z,r)$ to denote $\phdisk^{\D}(z,r)$.

\addtocontents{toc}{\SkipTocEntry}
\subsection{Holomorphic self-maps of the unit disc}\label{SS_PRE-holo-selfmaps}
The study of the dynamics of a generic holomorphic self-map $\varphi$ of the unit disc $\mathbb{D}$, different from the identity map,  is a classical and well-established branch of Complex Analysis.

The central result in the area is the Denjoy\,--\,Wolff Theorem, which states that if $\varphi$ is different from an elliptic automorphism (i.e. not an automorphism of~$\UD$ possessing a fixed point in~$\UD$), then the sequence of the iterates $(\varphi ^{\circ n})$ converges locally uniformly in~$\UD$ to a certain point~${\tau\in\overline{\mathbb{D}}}$.  This point  is called the \dff{Denjoy\,--\,Wolff point\/} of $\varphi$. Moreover, if $\tau\in \partial \D$, it is the unique boundary fixed point at which the angular derivative $\varphi'(\tau)$ is finite and belongs to $(0,1]$.

According to the position of the Denjoy\,--\,Wolff point~$\tau$ and to the value of the \textsl{multiplier} $\varphi'(\tau)$, holomorphic self-maps $\varphi\in\Hol(\UD)$ different from elliptic automorphisms are divided into three categories. Namely, $\varphi$ is called:
\begin{itemize}
\item[(a)] \dff{elliptic\/} if $\tau\in\UD$,

\item[(b)] \dff{hyperbolic\/} if $\tau\in \partial \D$ and $\varphi'(\tau )<1$, and

\item[(c)] \dff{parabolic\/} if $\tau
\in \partial \D$ such that $\varphi'(\tau )=1$.
\end{itemize}
All elliptic automorphisms of~$\UD$, along with the identity mapping~$\id_\UD$, are conventionally included in the category~(a) of elliptic self-maps.
Similarly, for an elliptic automorphism different from~$\id_\UD$, its Denjoy\,--\,Wolff point is defined to be its unique fixed point in~$\UD$.

Parabolic self-maps can have very different dynamical properties depending on the so-called \textit{hyperbolic step}.
Let $\varphi\in\Hol(\UD)$ be non-elliptic. Thanks to the Schwarz\,--\,Pick Lemma, for the orbit $\big(z_n\big):=\big(\varphi^{\circ n}(z_0)\big)$ of any point ${z_0\in\UD}$, there exists a finite limit $q(z_0):=\lim_{n\to+\infty} \hd_\D(z_{n},z_{n+1})$.
 It is known, see e.g. \cite[Corollary\,4.6.9]{Abate2}, that  either $q(z_0)>0$ for all~${z_0\in\UD}~$, or $q\equiv0$ in~$\UD$.  The self-map~$\varphi$ is said to be of \dff{positive} or of \dff{zero hyperbolic step} depending on whether the former or the latter alternative occurs.
If $\varphi$ is
hyperbolic, then it is always of positive hyperbolic step. However, there exist parabolic self-maps of zero as well as of positive hyperbolic step.

\addtocontents{toc}{\SkipTocEntry}
\subsection{Commuting holomorphic self-maps}\label{SS_commuting}
It is clear that if two  holomorphic self-maps $\varphi,\psi\in\Hol(\UD)\setminus\{\id_\UD\}$ commute, i.e. ${\varphi\circ\psi}={\psi\circ\varphi}$, and if one of them is elliptic, then the other is also elliptic and they share the Denjoy\,--\,Wolff point. The situation is not so evident when we consider non-elliptic self-maps. In 1973, Behan \cite{Behan}, see also \cite[Section~4.10]{Abate2}, proved that if $\varphi$ and~$\psi$ are commuting non-elliptic self-maps of~$\UD$ and $\varphi$ is not a hyperbolic automorphism, then  $\varphi$ and~$\psi$ share the  Denjoy\,--\,Wolff point.

Later, Cowen~\cite[Corollary~4.1]{Cowen-comm} proved that if $\varphi$ and $\psi$ are two non-elliptic commuting holomorphic self-maps of~$\D$ and if $\varphi$  is hyperbolic, then $\psi$ is also hyperbolic (and thus, if $\varphi$ is parabolic, then $\psi$ is parabolic)\footnote{See also \cite[Theorem~1.3]{Simultaneous} for a generalization of this result to holomorphic self-maps of the unit ball in~$\C^n$.}. In the special case when $\varphi$ is a hyperbolic automorphism, by an old result of Heins \cite[Lemma~2.1]{Heins}, $\psi\in\Hol(\UD)\setminus\{\id_\UD\}$ commutes with~$\varphi$ if and only if $\psi$ is a hyperbolic automorphism itself with the same fixed points as~$\varphi$.

It is worth mentioning that parabolic self-maps of positive hyperbolic step can commute with parabolic self-maps of zero hyperbolic step. Moreover, in contrast to the hyperbolic case, the fact that one of them is an automorphism would not imply that the other must be also an automorphism. To see this, one can consider the following example: ${\varphi:=h^{-1}\circ(h+1)}$, ${\psi:=h^{-1}\circ(h+i)}$, where $h$ is a conformal map of~$\UD$ onto~$\UH$.

\addtocontents{toc}{\SkipTocEntry}
\subsection{Holomorphic models for holomorphic self-maps}\label{Sec:modelos}
An indispensable role in our study is played by the concept of a holomorphic model, which goes back to Pommerenke~\cite{Pom79}, Baker and Pommerenke~\cite{BakerPommerenke}, and Cowen~\cite{Cowen}. The terminology we use is mainly borrowed from~\cite{Canonicalmodel}.

\begin{definition}\label{DF_absirbing} Let  $\varphi\in\Hol(\D)$. A domain $V \subset \D$ is \emph{invariant} for $\varphi$ (or $\varphi$-invariant) if $\varphi(V)\subset V$; it is \emph{absorbing} for $\varphi$ (or $\varphi$-absorbing) if it is $\varphi$-invariant and
$$
\D=\bigcup_{n\in\Natural}\big(\varphi^{\circ n}\big)^{-1}(V).
$$
In other words, a $\varphi$-invariant domain is $\varphi$-absorbing if it eventually contains the orbit of any point of $\D$.
\end{definition}

\begin{definition}\label{DF_holomorphic-model} A \dff{holomorphic semimodel} of $\varphi\in\Hol(\D)$  is a triple $\mathcal M:=(S,h,\alpha)$, where $S$ is a Riemann surface, $\alpha$ is an automorphism of $S$, and $h$ is a holomorphic map from $\UD$ into $S$ satisfying the following two conditions:
	\begin{enumerate}[left=2.5em]
		\item[(HM1)] $h\circ \varphi=\alpha \circ h$  {}~and
		\item[(HM2)] $S\,=\,\bigcup_{n\geq0}  \big(\alpha^{\circ n}\big)^{-1}(h(\D))$.
	\end{enumerate}
		A holomorphic \dff{model} of $\varphi\in \Hol (\D)$ is a semimodel $\mathcal M:=(S,h,\alpha)$ for which
	\begin{enumerate}[left=2.5em]
		\item[(HM3)] there exits a $\varphi$-absorbing domain $V\subset\UD$ in which $h$ is injective.
	\end{enumerate}
The Riemann surface $S$ is called the \dff{base space}, and the map $h$ is called the \dff{intertwining map} of the holomorphic model~$\mathcal M$.
\end{definition}

Given a holomorphic model $\mathcal M$ of~$\varphi$, every holomorphic semimodel of~$\varphi$ factorises via the model~$\mathcal M$ in the following sense.

\begin{lemma}[{\cite[Lemma 3.5.8 and Remark 3.5.6]{Abate2}}]\label{Le:morphism}
Assume that $\varphi\in\Hol(\UD)$ admits a holomorphic model $\mathcal M:=(S,h,\alpha)$.
If $(S_1,h_1,\alpha_1)$ is another holomorphic semimodel for~$\varphi$, then there exists  a surjective holomorphic map $\beta:S\to S_1$ such that ${h_1=\beta\circ h}$ and ${\beta\circ\alpha}={\alpha_1\circ\beta}$.
\end{lemma}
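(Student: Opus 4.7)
The plan is to bootstrap a local formula for~$\beta$, defined on the image $h(V)$ of the $\varphi$-absorbing domain~$V$ provided by~(HM3), out to all of~$S$ by exploiting the fact that $\alpha$ is an automorphism of~$S$. The preliminary observation is that $h(V)$ is ``$\alpha$-absorbing'' in~$S$, i.e.\ $S=\bigcup_{n\ge 0}(\alpha^{\circ n})^{-1}(h(V))$: for $s\in S$, (HM2) supplies $n_0$ and $z\in\UD$ with $\alpha^{\circ n_0}(s)=h(z)$, while the absorbing property of~$V$ gives an $m$ with $\varphi^{\circ m}(z)\in V$, and then (HM1) yields $\alpha^{\circ(n_0+m)}(s)=h(\varphi^{\circ m}(z))\in h(V)$.

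Next I would set $\beta_0:=h_1\circ(h|_V)^{-1}$ on $h(V)$, which is well defined and holomorphic by the injectivity of $h|_V$, and I would verify that $\beta_0\circ\alpha=\alpha_1\circ\beta_0$ already on $h(V)$; this is immediate from (HM1) applied to both triples together with the $\varphi$-invariance of~$V$, which keeps the computation inside $h(V)$, where $\beta_0$ is defined. Then, for an arbitrary $s\in S$, choosing any $n$ with $\alpha^{\circ n}(s)\in h(V)$, I would define
\[
\beta(s)\;:=\;\alpha_1^{\circ-n}\bigl(\beta_0(\alpha^{\circ n}(s))\bigr).
\]
The main obstacle is showing that this does not depend on~$n$. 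If $\alpha^{\circ n}(s)$ and $\alpha^{\circ(n+k)}(s)$ both lie in~$h(V)$, I would write the first as $h(z)$ with $z\in V$ unique (by injectivity of $h|_V$); then the $\varphi$-invariance of~$V$ together with iterated (HM1) gives $\alpha^{\circ(n+k)}(s)=h(\varphi^{\circ k}(z))$ with $\varphi^{\circ k}(z)\in V$, and (HM1) for the semimodel $(S_1,h_1,\alpha_1)$ yields $h_1(\varphi^{\circ k}(z))=\alpha_1^{\circ k}(h_1(z))$, so that both candidate values for~$\beta(s)$ collapse to $\alpha_1^{\circ-n}(h_1(z))$.

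Once well-definedness is settled, holomorphicity of~$\beta$ is automatic because each local branch is a composition of holomorphic maps, and the two required identities follow by construction: $\beta\circ\alpha=\alpha_1\circ\beta$ is built in (increase $n$ by one on both sides), while $h_1=\beta\circ h$ is obtained by picking $m$ with $\varphi^{\circ m}(z)\in V$ and applying (HM1) to both the model and the semimodel. Finally, for surjectivity I would note that $\beta(S)\supset h_1(V)$ by construction, and that $\beta(S)$ is invariant under both $\alpha_1$ and $\alpha_1^{\circ-1}$ by the intertwining; combined with (HM2) for $(S_1,h_1,\alpha_1)$, this forces $\beta(S)=S_1$.
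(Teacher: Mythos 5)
Your proof is correct, and it follows the standard route (the one in the cited reference): build $\beta_0=h_1\circ(h|_V)^{-1}$ on $h(V)$ using the injectivity of $h$ on the absorbing domain $V$, verify the intertwining there, and extend to all of $S$ by conjugating with $\alpha^{\circ n}$ and $\alpha_1^{\circ -n}$ after showing that $h(V)$ is $\alpha$-absorbing. The well-definedness check, the verification of $h_1=\beta\circ h$ and $\beta\circ\alpha=\alpha_1\circ\beta$, and the surjectivity argument via $\alpha_1^{\circ\pm1}$-invariance of $\beta(S)$ together with (HM2) for the semimodel are all sound.

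One small point worth making explicit: holomorphy of $(h|_V)^{-1}$, and hence of $\beta_0$, rests on $h(V)$ being open, which follows from the open mapping theorem since $h$, being injective on the domain~$V$, is non-constant. You implicitly use this when treating $h(V)$ as a domain on which local branches $\alpha_1^{\circ-n}\circ\beta_0\circ\alpha^{\circ n}$ are defined near each point. Also, in the surjectivity step you only claim $\beta(S)\supset h_1(V)$, which is enough, but it then requires one more application of the $\varphi$-absorbing property of $V$ (push $z$ into $V$ via $\varphi^{\circ m}$ and use (HM1) for the semimodel) to reach an arbitrary point of $S_1$ from $h_1(V)$; alternatively one can note that $h_1=\beta\circ h$ already gives $\beta(S)\supset h_1(\UD)$, after which (HM2) for $(S_1,h_1,\alpha_1)$ applies directly.
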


Every
non-elliptic self-map $\varphi\in\Hol(\UD)$ admits an essentially unique holomorphic model. More precisely, the following fundamental theorem holds.

\begin{theorem}[\protect{\cite[Corollary~4.5.5]{Abate2}}] \label{Thm:uniqness} Every
non-elliptic self-map $\varphi\in\Hol(\UD)$ admits a holomorphic model. Moreover such a model is unique up to a model isomorphism; i.e., if $(S_1,h_1,\alpha_1)$ and $(S_2,h_2,\alpha_2)$ are holomorphic models for $\varphi$, then there exists a biholomorphic map $\beta$ of~$S_1$ onto~$S_2$ such that
	$$
	h_2=\beta\circ h_1,\quad \alpha_2=\beta\circ\alpha_1\circ \beta^{-1}.
	$$
\end{theorem}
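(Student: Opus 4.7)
The plan is to handle existence and uniqueness separately, with uniqueness following essentially for free from the factorization result already at hand.

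\textbf{Uniqueness.} Given two holomorphic models $\mathcal M_1=(S_1,h_1,\alpha_1)$ and $\mathcal M_2=(S_2,h_2,\alpha_2)$, apply Lemma~\ref{Le:morphism} in both directions: each model is in particular a semimodel, so there exist surjective holomorphic maps $\beta_{12}\colon S_1\to S_2$ and $\beta_{21}\colon S_2\to S_1$ with $h_2=\beta_{12}\circ h_1$, $h_1=\beta_{21}\circ h_2$, and $\beta_{ij}\circ\alpha_i=\alpha_j\circ\beta_{ij}$. Composing, $h_1=(\beta_{21}\circ\beta_{12})\circ h_1$. Let $V_1\subset\UD$ be an absorbing domain for $\varphi$ on which $h_1$ is injective, as furnished by (HM3); by the open mapping theorem applied to $h_1|_{V_1}$, the set $h_1(V_1)$ is open in $S_1$, and on it $\beta_{21}\circ\beta_{12}$ coincides with $\id_{S_1}$. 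The identity principle on the Riemann surface $S_1$ then gives $\beta_{21}\circ\beta_{12}=\id_{S_1}$ globally; symmetrically $\beta_{12}\circ\beta_{21}=\id_{S_2}$. Thus $\beta:=\beta_{12}$ is a biholomorphism, and the intertwining relations are already built in.

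\textbf{Existence.} The plan is to follow the classical Cowen--Pommerenke strategy, normalizing first so that the Denjoy--Wolff point is $\tau=1$. The key is to build a simply connected $\varphi$-absorbing domain $V\subset\UD$ on which $\varphi$ is univalent, with orbits leaving compact subsets of $\UD$. In the hyperbolic case and in the parabolic positive-step case, $V$ can be built as a suitable union of iterates of a fundamental region bounded by horocycles/hyperbolic geodesics landing at $\tau$; in the parabolic zero-step case, where $\rho_\UD(\varphi^{\circ n}(z),\varphi^{\circ(n+1)}(z))\to 0$, one uses a horodisk-type absorbing domain. Once $V$ is at hand, one chooses a Riemann map $\sigma$ of a reference domain $\Omega$ (a half-plane or a strip) onto $V$ and conjugates: $\widetilde\varphi:=\sigma^{-1}\circ\varphi\circ\sigma$ becomes a self-map of $\Omega$ of a controlled type.

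On $V$, one solves the Schr\"oder/Abel functional equation $g\circ\widetilde\varphi=\alpha\circ g$ for the expected linear model $\alpha$ ($w\mapsto\varphi'(\tau)w$ in the hyperbolic case, $w\mapsto w+1$ in the parabolic case), using a Koenigs/Valiron-type renormalized limit of iterates. Having obtained an intertwining map on $V$, extend it to all of $\UD$ by setting
\[
h(z):=\alpha^{\circ(-n)}\bigl(h(\varphi^{\circ n}(z))\bigr)
\]
for $n$ so large that $\varphi^{\circ n}(z)\in V$ (well-definedness and holomorphy follow from the absorption property and the equation on $V$). Finally, assemble the base space as the abstract inductive limit
\[
S:=\bigcup_{n\ge 0}\alpha^{\circ(-n)}\bigl(h(\UD)\bigr),
\]
equipped with the natural Riemann surface structure inherited from $\C$ (or from the quotient, in the parabolic zero-step subcase requiring a covering), and take $\alpha$ extended correspondingly. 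Properties (HM1)--(HM3) then hold by construction, with $V$ witnessing (HM3).

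\textbf{Main obstacle.} The crux, and by far the most delicate step, is the construction of the absorbing simply connected domain $V$ on which $\varphi$ is univalent and on which the renormalized iterates actually converge to a solution of the functional equation. The three dynamical classes (hyperbolic, parabolic positive-step, parabolic zero-step) require three separate geometric arguments: the first two rely on hyperbolic-geometric sector constructions near $\tau$, whereas the zero hyperbolic step case depends on the fact that consecutive orbit points become hyperbolically indistinguishable, forcing a horodisk-style fundamental region. All subsequent steps (Koenigs-type limit, extension to $\UD$, abstract base space) are then standard consequences, but each of them requires verification that the chosen $V$ has precisely the properties needed.
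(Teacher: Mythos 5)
The paper itself does not prove this theorem; it quotes it verbatim from Abate's monograph \cite[Corollary~4.5.5]{Abate2}, so there is no proof in this paper to compare against. Evaluated on its own merits, your \emph{uniqueness} argument is complete and correct: applying Lemma~\ref{Le:morphism} in both directions gives surjective intertwiners $\beta_{12},\beta_{21}$, the identity ${h_1=\beta_{21}\circ\beta_{12}\circ h_1}$ together with injectivity of $h_1$ on an absorbing domain $V_1$ forces $\beta_{21}\circ\beta_{12}=\id$ on the open set $h_1(V_1)$, and the identity principle on the connected Riemann surface $S_1$ gives $\beta_{21}\circ\beta_{12}=\id_{S_1}$, after which $\beta:=\beta_{12}$ is a biholomorphism (injectivity from this composition, surjectivity from the lemma) and the conjugacy $\alpha_2=\beta\circ\alpha_1\circ\beta^{-1}$ is the intertwining relation supplied by the lemma. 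This is precisely the standard deduction of uniqueness from the universal property of the model, and it matches the route in Abate's book.

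The \emph{existence} half, by contrast, is a plan rather than a proof. You correctly identify the skeleton of the Cowen--Pommerenke construction (build an absorbing simply connected $\varphi$-invariant domain $V$ on which $\varphi$ is univalent, solve the Abel/Schr\"oder equation on $V$ via a renormalized limit of iterates, propagate $h$ to all of $\UD$ via $h(z)=\alpha^{\circ(-n)}(h(\varphi^{\circ n}(z)))$, and take the direct limit ${S=\bigcup_n\alpha^{\circ(-n)}(h(\UD))}$ as the base space), and you correctly flag that the substance lies in constructing $V$ and proving convergence separately in the three non-elliptic regimes. But none of that substance is carried out, so nothing in the existence part rises above the level of naming the known ingredients. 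If the intent is to treat the theorem as background --- which is what the paper does --- that is reasonable; if the intent is to prove it, the existence half remains to be done, and the absorbing-domain and convergence arguments are where the genuine difficulty resides.
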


The proof of the existence of holomorphic models depends strongly on the existence of absorbing sets (see, i.e., \cite[Theorem 3.5.10]{Abate2}).
In the next result we summarize what we need about the models for self-maps of the unit disc.
The type of a self-map is reflected in, and actually can be fully determined from, the kind of holomorphic model $\varphi$ admits.  For an open interval $I\subset\Real$, we define
$$
 S_I:=\Real\times I=\{x+iy:x\in\Real,\,y\in I\}.
$$

\begin{theorem}[\protect{\cite{Cowen}, see also \cite[Theorem 4.6.8]{Abate2}}]\label{Thm:model}
Let $\varphi\in\Hol(\UD)$. The following statements hold.
\begin{ourlist}
	\item\label{IT_HM-hyp} $\varphi$ is a hyperbolic self-map with multiplier $\lambda\in(0,1)$ if and only if $\varphi$ admits a holomorphic model of the form $\mathcal M_\varphi:=(S_{I},h,z\mapsto z+1)$, where $I=(a,b)$ is a bounded open interval of length ${b-a=\pi/|\log\lambda|}$.
	\item\label{IT_HM-para-PHS} $\varphi$ is a parabolic self-map of positive hyperbolic step if and only if $\varphi$ admits a holomorphic model of the form ${\mathcal M_\varphi:=(S_{I},h,z\mapsto z+1)}$, where $I$ is an open unbounded interval different from the whole~$\Real$.
	\item\label{IT_HM-para-0HS} $\varphi$ is a parabolic self-map of zero hyperbolic step if and only if $\varphi$ admits a holomorphic model of the form ${\mathcal M_\varphi:=(\C,h,z\mapsto z+1)}$.	
\end{ourlist}	
\end{theorem}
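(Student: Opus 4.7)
The plan is to apply Theorem~\ref{Thm:uniqness} and classify the possible shapes of the base space~$S$ via uniformization. For a non-elliptic $\varphi$, the automorphism $\alpha$ in any model $(S,h,\alpha)$ must be fixed-point free: otherwise, if $\alpha(s_0)=s_0$, condition (HM2) puts $s_0$ in some $(\alpha^{\circ n})^{-1}(h(\D))$, and combining (HM1) with injectivity of $h$ on the absorbing domain~$V$ of (HM3) produces a fixed point of an iterate of $\varphi|_V$, contradicting non-ellipticity. Using that $\alpha(h(V))\subset h(V)$ (so $\alpha^{-n}(h(V))$ is ascending in $n$), one writes $S=\bigcup_n\alpha^{-n}(h(V))$ as an ascending union of biholomorphic copies of the (simply connected) set~$V$, hence $S$ is a simply connected Riemann surface. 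By uniformization, $S$ is biholomorphic either to $\C$ or to $\UH$ (the Riemann sphere is ruled out because every automorphism of $\widehat{\C}$ has a fixed point).

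Each uniformization case now branches according to the conformal type of $\alpha$. If $S\cong\C$, then $\alpha$ is a nontrivial translation and an affine change of coordinate on $S$ yields $(\C,h,z\mapsto z+1)$, giving case~\ref{IT_HM-para-0HS}. If $S\cong\UH$, then $\alpha$ is either parabolic or hyperbolic on~$\UH$. A parabolic $\alpha$ is conjugate to $z\mapsto z+1$ on~$\UH$, so after conjugation $S$ becomes $S_{(0,\infty)}$, or more generally a strip $S_I$ with $I$ unbounded on exactly one side (allowing the reflection $z\mapsto\bar z$), which is case~\ref{IT_HM-para-PHS}. A hyperbolic $\alpha$ is conjugate to $z\mapsto\mu z$ on $\UH$ for some $\mu>0$ with $\mu\neq 1$; applying the principal logarithm and rescaling by $\pi/|\log\mu|$ conjugates this to $z\mapsto z+1$ on a horizontal strip of width exactly $\pi/|\log\mu|$, as in case~\ref{IT_HM-hyp}.

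To match each shape of~$S$ with the dynamical type of~$\varphi$, I would use the hyperbolic step $q:=\lim_n\hd_{\D}(\varphi^{\circ n}(z),\varphi^{\circ(n+1)}(z))$ and the multiplier $\varphi'(\tau)$. Since the orbit eventually lies in the absorbing set~$V$, on which $h$ is a biholomorphism onto $h(V)\subset S$, the Schwarz--Pick lemma together with the absorbing property yields $q=\lim_n\hd_{S}\bigl(h(z)+n,\,h(z)+n+1\bigr)$. When $S=\C$ this limit vanishes, so $\varphi$ is parabolic of zero hyperbolic step. When $S=S_I$ with $I$ unbounded on one side, $\alpha$ is a parabolic isometry of the hyperbolic strip and the orbit recedes into the cusp at infinity, forcing $q>0$ with $\varphi$ parabolic of positive hyperbolic step. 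When $I=(a,b)$ is bounded, $\alpha$ is a hyperbolic isometry whose translation length depends monotonically on $\pi/(b-a)$; comparing this with the analogous computation of $q$ for hyperbolic $\varphi$ with multiplier $\lambda=\varphi'(\tau)$ forces $b-a=\pi/|\log\lambda|$, giving case~\ref{IT_HM-hyp}. The converse implications follow immediately from the uniqueness part of Theorem~\ref{Thm:uniqness}. The main technical obstacle is the rigorous justification that $S$ is simply connected and that the hyperbolic-distance limits behave well through the inductive construction of the model; these points are treated in detail in~\cite{Cowen,Canonicalmodel,Abate2}.
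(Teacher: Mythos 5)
The paper does not prove Theorem~\ref{Thm:model}; it is imported as a classical result of Cowen (with Abate's book as a modern reference), so there is no internal proof to compare against. Your sketch follows the standard route (classify the base space $S$ of the abstract model of Theorem~\ref{Thm:uniqness} via uniformization, then classify the automorphism~$\alpha$, then match with the dynamical type), and at the level of architecture it is sound. However, a few steps as written contain genuine gaps.

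First, the simple connectivity of~$S$ does not follow from the stated argument: condition (HM3) only requires that $h$ be injective on \emph{some} $\varphi$-absorbing \emph{domain}~$V$, and nothing in Definition~\ref{DF_holomorphic-model} forces $V$ to be simply connected. Your ascending-union argument works only under that hypothesis. To patch this one must either (a) know that the existence proof produces a model with simply connected~$V$ (e.g.\ with $V=V_\varphi(r)$ as in Theorem~\ref{Thm:V-sets}), and then invoke the uniqueness of Theorem~\ref{Thm:uniqness} to conclude every model has simply connected~$S$; or (b) replace the given~$V$ by a simply connected $\varphi$-absorbing subdomain, which is not immediate from~(HM3).

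Second, the dynamical-matching step needs care. Writing $q=\lim_n\hd_S\bigl(h(z)+n,h(z)+n+1\bigr)$ is ill-defined when $S=\C$ (no hyperbolic metric); the correct statement in that case is an exhaustion argument: since $h_\varphi(\UD)-n$ eventually contains every compact subset of~$\C$, the hyperbolic densities of those domains go to zero, so $\hd_\UD(\varphi^{\circ n}(z),\varphi^{\circ n+1}(z))\to0$. When $S$ is a strip, the correct formula is the constant value $\hd_S\bigl(h(z),h(z)+1\bigr)$ (translation-invariance of the strip metric), and this gives $q(z)$ as a function of the height $\Im h(z)$, not a single number. Deducing the exact width $b-a=\pi/|\log\lambda|$ therefore does not follow just from ``comparing the computation of $q$''; in the classical proofs it comes from the Julia--Wolff--Carath\'eodory theorem and the asymptotics of the Koenigs function near~$\tau$, which you omit. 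You acknowledge these as technical points to be borrowed from the references, which is acceptable for a sketch but worth flagging since they carry the quantitative content of case~\ref{IT_HM-hyp}.
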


\begin{remark}\label{RM_normalization}
In the above theorem, we may assume that:
\begin{itemize}
 \item[-]in cases~\ref{IT_HM-hyp} and~\ref{IT_HM-para-0HS}, $h(0)=0$;
 \item[-]in case~\ref{IT_HM-para-PHS}, $\Re h(0)=0$ and ${S_I=S_{(0,+\infty)}=\UH}$ or ${S_I=S_{(-\infty,0)}=-\UH}$.
\end{itemize}
Using the uniqueness part of Theorem~\ref{Thm:uniqness}, one can show (see e.g. \cite[Corollary 4.6.12]{Abate2} for details) that the above assumptions play the role of a normalization under which the holomorphic model $\mathcal M_\varphi$ for a given non-elliptic self-map~$\varphi$ is unique. Note that the normalization for cases \ref{IT_HM-hyp} and~\ref{IT_HM-para-0HS} would also work in case~\ref{IT_HM-para-PHS}, but we prefer to use another normalization, so that for parabolic self-maps of positive hyperbolic step,  the base space~$S_I$ of~$\mathcal M_\varphi$ coincides with $\UH$ or~$-\UH$. Moreover, replacing, if necessary, $\varphi$ with $z\mapsto\overline{\varphi(\bar z)}$ we may assume in most of the proofs that ${S_I=\UH}$.
\end{remark}

\begin{definition}\label{DF_canonical}
 The unique holomorphic model $\mathcal M_\varphi$ of a non-elliptic self-map  $\varphi\in\Hol(\UD)\setminus\{\id_\UD\}$ defined in Theorem~\ref{Thm:model} and normalized as in Remark~\ref{RM_normalization} is called \dff{the canonical (holomorphic) model} for~$\varphi$. The intertwining map~$h$ of the canonical model~$\mathcal M_\varphi$ is called the \dff{Koenigs function}, and we denote it, from now on, $h_{\varphi}$. Finally, ${h_{\varphi}(\UD)}$ is called the \dff{Koenigs domain} of~$\varphi$.
\end{definition}

\begin{remark} \label{Re:almostasemigroup}
Let $\varphi\in\Hol(\UD)$ be a non-elliptic self-map with canonical holomorphic model  $(S,h_{\varphi},z\mapsto z+1)$. By the very definition of holomorphic model, we have that for any compact set $K$ in $S$, there exists  $N>0$ such that for all natural number $n>N$, it holds that $K+n\subset h_{\varphi}(\D)$. In fact, something stronger can be proved: Take $K$ a compact set in $S$, and consider the compact set $$\mtilde K=\{w+s:\, w\in K, s\in [0,2]\}\subset S.$$ Then there is $N>0$ such that $\mtilde K+n \subset h_{\varphi}(\D)$ for all  $n>N$. Therefore,  ${K+t\subset h_{\varphi}(\D)}$ for any real number ${t>N}$. Roughly speaking, this means that asymptotically every Koenigs domain behaves like the Koenigs domain of a non-elliptic semigroup. This fact has been used several times in the literature, see e.g.  \cite[Lemma~7.6]{Bracci-Roth} and   \cite[Lemma~2.2]{Poggi}.
A bit more generally, if ${V\subset\UD}$ is any $\varphi$-absorbing open set, then  repeating the same argument with $\UD$ replaced by~$V$, we see that for each compact $K\subset S$ there exists $t_0={t_0(K,V)\ge0}$ such that ${K+t\subset h_{\varphi}(V)}$ for all ${t\in[t_0,+\infty)}$.
\end{remark}

The following result proved in~\cite{CDP} plays an important role in our study. For  a given parabolic self-map $\varphi\in\Hol(\UD)$, it provides a construction of a family of invariant sets with certain useful properties. In particular, if $\varphi$ is of zero hyperbolic step then these sets are $\varphi$-absorbing. Recall that a domain $U\subset\UD$ is said to be isogonal at a point~${\sigma\in\UC}$ if for any Stolz angle ${S\subset\UD}$ with vertex at~$\sigma$ there exists~${\varepsilon>0}$ such that ${\{z\in S:|z-\sigma|<\varepsilon\}\subset U}$.

\begin{theorem} [\protect{\cite[Theorem 5.1]{CDP}}]
\label{Thm:V-sets}
Let $\varphi \in \Hol(\D)$ be
parabolic  with Denjoy\,--\,Wolff point $\tau\in \partial \D$ and Koenigs function $h_{\varphi}$.
Given $0<r <1,$ write\begin{equation*}
V_{\varphi}(r )=\left\{ z\in \mathbb{D}:\phd_\UD(z,\varphi
(z))<r \right\}.
\end{equation*}Then,
\begin{ourlist}
\item\label{IT_V-sets(1)}
     $V_{\varphi}(r )$ is  $\varphi$-invariant, that is, $\varphi (V_{\varphi}(r ))\subset V_{\varphi}(r )$ for all $0<r <1.$

\item\label{IT_V-sets(2)}
     $V_{\varphi}(r)$ is a simply connected domain.

\item\label{IT_V-sets(3)} $V_{\varphi}(r )$ is isogonal at $\tau .$

\item\label{IT_V-sets(4)}
    If $0<r \leq \frac{1}{3},$ then both $\varphi $ and its  Koenigs map $h_{\varphi}$ are univalent in $V_{\varphi}(r ).$

\item\label{IT_V-sets(5)}
     If $\varphi $ is of zero hyperbolic step and $0<r \leq \frac{1}{3}$, then $V_{\varphi}(r )$ is $\varphi$-absorbing.
\end{ourlist}
\end{theorem}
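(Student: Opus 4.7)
The unifying device is the holomorphic function $F(z):=\big(z-\varphi(z)\big)/\big(1-\overline{\varphi(z)}z\big)$, for which $|F(z)|=\phd_\D(z,\varphi(z))$ and which is zero-free on $\D$ because $\varphi$, being parabolic, has no interior fixed point. Thus $V_\varphi(r)=\{z\in\D:|F(z)|<r\}$ is automatically open, and assertion~\ref{IT_V-sets(1)} is an immediate application of the Schwarz\,--\,Pick inequality $\phd_\D(\varphi(z),\varphi(\varphi(z)))\le\phd_\D(z,\varphi(z))$. Assertion~\ref{IT_V-sets(3)} follows from the Julia\,--\,Wolff\,--\,Carath\'eodory machinery at $\tau$: since $\varphi'(\tau)=1$, one has $\phd_\D(z,\varphi(z))\to 0$ as $z\to\tau$ non-tangentially, hence truncations of any Stolz angle sit inside $V_\varphi(r)$.

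For assertion~\ref{IT_V-sets(2)}, the zero-free property of $F$ makes $\log|F|$ harmonic on $\D$. A bounded complementary component $K$ of $V_\varphi(r)$ in $\D$ would satisfy $\log|F|=\log r$ on $\partial K$ and $\log|F|\ge\log r$ in $K$, so the maximum principle applied to $\log r-\log|F|$ forces $|F|\equiv r$ on $K$; by the open mapping theorem $F$ is then constant on $\D$, making $\varphi$ a Möbius automorphism and contradicting parabolicity. Connectedness of $V_\varphi(r)$ I would obtain by passing to the canonical Koenigs model $(S,h_\varphi,z\mapsto z+1)$: using Remark~\ref{Re:almostasemigroup}, any two points of $V_\varphi(r)$ can first be flowed forward by iterates of $\varphi$ into a single $\varphi$-absorbing open subset of $V_\varphi(r)$ on which $h_\varphi$ is a homeomorphism onto a far-right region of $S$, and then joined there by a segment pulled back via $h_\varphi^{-1}$.

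For assertion~\ref{IT_V-sets(4)}, the conjugacy $h_\varphi\circ\varphi=h_\varphi+1$ reduces univalence of $\varphi$ on $V_\varphi(r)$ to univalence of $h_\varphi$ there. Taking $z_1\neq z_2$ in $V_\varphi(r)$ with $h_\varphi(z_1)=h_\varphi(z_2)$, the equality persists along both forward orbits, which eventually enter the absorbing set on which $h_\varphi$ is injective, so the orbits collide at some finite stage. Backtracking one step at a time uses $r\le 1/3$ in an essential way: with that bound, each preceding pair $\varphi^{\circ(n-1)}(z_1),\varphi^{\circ(n-1)}(z_2)$ lies inside the pseudohyperbolic disc $\phdisk(\varphi^{\circ n}(z_j),1/3)$, and Schwarz\,--\,Pick contraction on overlapping such discs, combined with $h_\varphi(\varphi^{\circ(n-1)}(z_1))=h_\varphi(\varphi^{\circ(n-1)}(z_2))$, prevents collision of two distinct preimages and yields $z_1=z_2$. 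Finally, assertion~\ref{IT_V-sets(5)} is immediate from the definition of zero hyperbolic step: $\phd_\D(\varphi^{\circ n}(z),\varphi^{\circ(n+1)}(z))\to 0$ for every $z\in\D$, so every orbit enters $V_\varphi(r)$ and, by~\ref{IT_V-sets(1)}, stays there.

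The sharpest obstacle is pinning down the exact threshold $1/3$ in assertion~\ref{IT_V-sets(4)}: a coarser bound would follow from a single application of Schwarz\,--\,Pick on a small enough pseudohyperbolic disc, but extracting precisely $1/3$ requires a careful chain of estimates on successive orbit points together with the Koenigs normalization. Connectedness in assertion~\ref{IT_V-sets(2)} is also subtle, since $V_\varphi(r)$ is neither convex nor starlike in any visible sense and the argument genuinely needs the Koenigs model to produce paths between arbitrary points.
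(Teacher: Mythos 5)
The paper does not actually prove Theorem~\ref{Thm:V-sets}; it simply cites \cite[Theorem~5.1]{CDP}. So there is no internal proof to compare against, and I will assess your reconstruction on its own terms. Parts~\ref{IT_V-sets(1)}, \ref{IT_V-sets(3)} and \ref{IT_V-sets(5)} are fine: Schwarz--Pick gives invariance immediately, Julia--Wolff--Carath\'eodory gives $\phd_\UD(z,\varphi(z))\to0$ uniformly in each Stolz angle, and the zero-hyperbolic-step definition gives the absorbing property once invariance is known.

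Part~\ref{IT_V-sets(2)} contains a fatal error. The function $F(z)=\bigl(z-\varphi(z)\bigr)/\bigl(1-\overline{\varphi(z)}\,z\bigr)$ is \emph{not} holomorphic: the factor $\overline{\varphi(z)}$ is anti-holomorphic, so $F$ is merely a smooth function with $|F|=\phd_\UD(z,\varphi(z))$. Consequently $\log|F|$ is not harmonic (nor even subharmonic in general: a direct computation of $\Delta\log\phd_\UD(z,\varphi(z))$ yields $4\Re\bigl[\overline{\varphi'(z)}/(1-\overline{\varphi(z)}z)^2\bigr]$, which is not sign-definite), the maximum principle argument for ruling out bounded complementary components collapses, and the appeal to the open mapping theorem is vacuous since $F$ is not open. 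The connectedness sketch via Remark~\ref{Re:almostasemigroup} is also incomplete: you must actually exhibit paths inside $V_\varphi(r)$ joining a point to its forward orbit (e.g.\ via the fact that the hyperbolic geodesic segment $[z,\varphi(z)]$ lies in $V_\varphi(r)$, since for $u$ on that segment $\hd_\UD(u,\varphi(u))\le\hd_\UD(u,\varphi(z))+\hd_\UD(\varphi(z),\varphi(u))\le\hd_\UD(u,\varphi(z))+\hd_\UD(z,u)=\hd_\UD(z,\varphi(z))$), and then explain why two such orbit-paths land in the same component, which is not automatic because the orbits need not converge to $\tau$ non-tangentially.

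Part~\ref{IT_V-sets(4)} is circular. The reduction from $\varphi$ to $h_\varphi$ is fine, and the forward step (forward orbits eventually reach a $\varphi$-absorbing domain on which $h_\varphi$ is injective, forcing the orbits to merge) is fine. But the backtracking step is exactly the statement you are trying to prove: knowing that $\varphi^{\circ(n-1)}(z_1)$ and $\varphi^{\circ(n-1)}(z_2)$ both lie in $\phdisk\bigl(\varphi^{\circ n}(z_1),1/3\bigr)$ and have equal $h_\varphi$-values does not force them to coincide unless you already know that $h_\varphi$ is injective on that pseudohyperbolic disc. ``Schwarz--Pick contraction on overlapping discs'' does not yield this; you need a genuine quantitative injectivity estimate (typically obtained from a Koebe-type distortion argument exploiting $h_\varphi\circ\varphi=h_\varphi+1$ together with $\phd_\UD(z,\varphi(z))<1/3$), and that is precisely where the specific constant $1/3$ comes from in \cite{CDP}.
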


The notation $V_{\varphi}(r)$ introduced in the above theorem will be used frequently throughout the paper. In addition, some elementary properties of these sets will be used and, for the sake of clearness, we collect them in the next lemma.

\begin{lemma}\label{Le:V-sets}
Let $\varphi ,\psi \in \Hol(\D)$ be parabolic. The following statements hold.
\begin{ourlist}
\item\label{V-sets-fact1} If $\psi\in  \Zn(\varphi)$, then $V_{\varphi}(r)$ is $\psi$-invariant for all ${r\in(0,1)}$.
\item\label{V-sets-fact2} Take $z,w\in \D$ such that $\phd_{\D}(z,\psi(z))<a$ and $\phd_{\D}(z,w)<b$. Then $\phd_{\D}(w,\psi(w))<a+2b$.
\item\label{V-sets-fact3} If $z \in V_{\psi}(1/9)$ and $\phd_{\D}(z,w)<1/9$, then $w\in V_{\psi}(1/3)$. In other words, $\phdisk(z,1/9)\subset V_{\psi}(1/3)$ whenever ${z \in V_{\psi}(1/9)}$.
\item\label{V-sets-fact4a} If ${z\in V_{\psi}(1/9)}$, then $\phdisk\big(\psi^{\circ n}(z),1/9\big)\subset V_{\psi}(1/3)$ for any~${n\in\Natural}$.
\item\label{V-sets-fact4} If $\psi\in  \Zn(\varphi)$ and ${z\in V_{\psi}(1/9)}$, then $\phdisk\big(\varphi^{\circ n}(z),1/9\big)\subset V_{\psi}(1/3)$ for any~${n\in\Natural}$.
\end{ourlist}
\end{lemma}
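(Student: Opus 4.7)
The plan is to prove the five items in order, as each one builds on the previous.

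For \ref{V-sets-fact1}, I would use commutativity directly: if $z\in V_{\varphi}(r)$, then because $\psi\circ\varphi=\varphi\circ\psi$ we have $\varphi(\psi(z))=\psi(\varphi(z))$, and the Schwarz\,--\,Pick Lemma applied to $\psi$ gives
\[
\phd_\D\big(\psi(z),\varphi(\psi(z))\big)=\phd_\D\big(\psi(z),\psi(\varphi(z))\big)\le \phd_\D\big(z,\varphi(z)\big)<r,
\]
so $\psi(z)\in V_{\varphi}(r)$. This is the core idea and it immediately carries over, by the symmetry of commutation, to the fact that $V_{\psi}(r)$ is $\varphi$-invariant, which will be needed in~\ref{V-sets-fact4}.

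For \ref{V-sets-fact2}, I would combine the (standard) triangle inequality for the pseudohyperbolic distance with the Schwarz\,--\,Pick Lemma applied to $\psi$:
\[
\phd_\D(w,\psi(w))\le \phd_\D(w,z)+\phd_\D(z,\psi(z))+\phd_\D(\psi(z),\psi(w))\le b+a+b=a+2b.
\]
Item~\ref{V-sets-fact3} is then just the specialization $a=b=1/9$ of~\ref{V-sets-fact2}, since the hypothesis $z\in V_{\psi}(1/9)$ provides $\phd_\D(z,\psi(z))<1/9$ and the inclusion $\phdisk(z,1/9)\subset V_{\psi}(1/3)$ is a restatement of the implication.

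Finally, items~\ref{V-sets-fact4a} and~\ref{V-sets-fact4} are immediate combinations of the previous parts. For~\ref{V-sets-fact4a}, Theorem~\ref{Thm:V-sets}\,\ref{IT_V-sets(1)} (with $r=1/9$) shows that $V_{\psi}(1/9)$ is $\psi$-invariant, hence $\psi^{\circ n}(z)\in V_{\psi}(1/9)$ for every $n\in\Natural$, and~\ref{V-sets-fact3} then yields $\phdisk(\psi^{\circ n}(z),1/9)\subset V_{\psi}(1/3)$. For~\ref{V-sets-fact4}, since commuting is symmetric, $\varphi\in\Zn(\psi)$; therefore by~\ref{V-sets-fact1} (applied with the roles of $\varphi$ and $\psi$ interchanged) the set $V_{\psi}(1/9)$ is $\varphi$-invariant, so $\varphi^{\circ n}(z)\in V_{\psi}(1/9)$, and~\ref{V-sets-fact3} again gives the claimed inclusion. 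None of the steps present any real obstacle; the only point to keep track of is the sharpness of the constants $1/9$ and $1/3$, which is engineered precisely so that the conclusion $a+2b=1/3$ of~\ref{V-sets-fact2} matches the threshold from Theorem~\ref{Thm:V-sets}\,\ref{IT_V-sets(4)}--\ref{IT_V-sets(5)}.
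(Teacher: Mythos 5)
Your proposal is correct and follows essentially the same route as the paper's proof: commutativity plus Schwarz--Pick for \ref{V-sets-fact1}, the triangle inequality for the pseudohyperbolic metric plus Schwarz--Pick for \ref{V-sets-fact2}, specialization for \ref{V-sets-fact3}, and the invariance results (\ref{V-sets-fact1} with roles interchanged, respectively Theorem~\ref{Thm:V-sets}\,\ref{IT_V-sets(1)}) combined with \ref{V-sets-fact3} for \ref{V-sets-fact4a} and \ref{V-sets-fact4}. No gaps; your observation about the matching of the constants $1/9$ and $1/3$ with the threshold in Theorem~\ref{Thm:V-sets} is a useful sanity check but not logically required.
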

\begin{proof}
If  $\psi\in  \Zn(\varphi)$ and $z\in V_\varphi (r)$, then
$$
\phd_{\D}\big(\psi(z), \varphi\circ \psi(z)\big)=\phd_{\D}\big(\psi(z), \psi\circ \varphi(z)\big)\leq \phd_{\D}\big(z,  \varphi(z)\big)<r,
$$
so that $\psi(z)\in V_\varphi (r)$, where we have used the Schwarz\,--\,Pick Lemma. Thus \ref{V-sets-fact1} holds. Further, \ref{V-sets-fact2} is a consequence of the triangular inequality and again the Schwarz\,--\,Pick Lemma, and \ref{V-sets-fact3} follows from~\ref{V-sets-fact2}.

By Theorem~\ref{Thm:V-sets}\+\ref{IT_V-sets(1)},  ${\psi^{\circ n}(z)\in V_{\psi}(1/9)}$ for any ${z\in V_{\psi}(1/9)}$ and any~${n\in\Natural}$. Therefore, \ref{V-sets-fact3} implies~\ref{V-sets-fact4a}.
Similarly, by~\ref{V-sets-fact1} with $\varphi$ and~$\psi$ interchanged, ${\varphi^{\circ n}(z)\in V_{\psi}(1/9)}$ for any ${z\in V_{\psi}(1/9)}$ and any~${n\in\Natural}$; as a result, \ref{V-sets-fact4} follows from~\ref{V-sets-fact3}.
\end{proof}

The nice properties of the sets $V_\varphi(r)$ allow us to show that the type of the self-map $\varphi$ and the type of the restriction  $\varphi|_{V_\varphi(r)}$ coincide.  More precisely, we are going to prove the following statement.

Fix a parabolic self-map $\varphi\in\Hol(\D)$ and some $r\in(0,1/3]$. Let $f$ be a conformal map of~$\UD$ onto~${V_\varphi(r)}$. Since ${V_\varphi(r)}$ is $\varphi$-invariant, $\mtilde \varphi:={f^{-1}\circ \varphi\circ f}$ is a holomorphic self-map of~$\UD$.

\def\Mdd{\mathcal M_{\mtilde\varphi}}
\def\Md{\mathcal M_{\varphi}}
\begin{proposition} \label{modelV_par}
In the above notation, for any parabolic self-map $\varphi\in\Hol(\D)$ the following statements hold:
	\begin{ourlist}
     \item\label{IT_modelV-zero-step}
           If $\varphi$ is of zero hyperbolic step with canonical model ${\Md:=(\C, h_\varphi,z\mapsto z+1)}$, then $${\Mdd:=(\C, h_\varphi\circ f,z\mapsto z+1)}$$ is a holomorphic model of $\mtilde \varphi$ and thus, $\mtilde \varphi$ is also parabolic of zero hyperbolic step. \medskip

	 \item\label{IT_modelV-pos-step}
           If $\varphi$ is  of positive hyperbolic step with canonical model $\Md:={(S, h_\varphi\circ f,z\mapsto z+1)}$, then $\mtilde\varphi$ is also parabolic of positive hyperbolic step. Moreover, if ${S=\H}$ (resp. $S=-\H$), then $\Mdd:={(\Pi_r, h_\varphi\circ f,z\mapsto z+1)}$ (resp. $\Mdd:={(-\Pi_r, h_\varphi\circ f,z\mapsto z+1)}$), where
		   $$
		    \Pi_r:=\big\{w\in\H:\phd_\H(w,w+1)<r\big\}=\big\{w\in\H:\Im w>\sqrt{1-r^2\,}\big/(2r)\big\},
		   $$
is a holomorphic model of~$\mtilde \varphi$.
	\end{ourlist}
\end{proposition}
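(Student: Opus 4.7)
The plan is to verify the three axioms (HM1)--(HM3) of a holomorphic model for each candidate triple; (HM1) and (HM3) are essentially common to both parts. Condition (HM1) is a direct algebraic manipulation: from $h_\varphi\circ\varphi=h_\varphi+1$ one reads off
$(h_\varphi\circ f)\circ\mtilde\varphi=h_\varphi\circ\varphi\circ f=(h_\varphi\circ f)+1$.
For (HM3), Theorem~\ref{Thm:V-sets}\ref{IT_V-sets(4)} together with $r\leq 1/3$ makes $h_\varphi$ univalent on $V_\varphi(r)=f(\D)$, so composition with the conformal map $f$ shows that $h_\varphi\circ f$ is univalent on all of~$\D$; since $\D$ is trivially $\mtilde\varphi$-absorbing, (HM3) holds with the choice $W=\D$ in both cases.

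In part~\ref{IT_modelV-zero-step} only (HM2), i.e.\ $\bigcup_{n\geq 0}\bigl(h_\varphi\circ f(\D)-n\bigr)=\C$, remains to be checked. Here Theorem~\ref{Thm:V-sets}\ref{IT_V-sets(5)} makes $V_\varphi(r)$ itself $\varphi$-absorbing, so Remark~\ref{Re:almostasemigroup} applied with $V=V_\varphi(r)$ gives, for every compact $K\subset\C$ and all sufficiently large~$t$, $K+t\subset h_\varphi(V_\varphi(r))$; taking $K$ to be a singleton immediately yields the required covering.

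Part~\ref{IT_modelV-pos-step} also reduces to (HM2). The inclusion $h_\varphi\circ f(\D)\subset\Pi_r$ is the easy half: writing $z'=f(z)\in V_\varphi(r)$ and applying the Schwarz\,--\,Pick lemma to $h_\varphi:\D\to\H$, one obtains $\phd_\H\bigl(h_\varphi(z'),h_\varphi(z')+1\bigr)\leq \phd_\D\bigl(z',\varphi(z')\bigr)<r$, and the identity $\phd_\H(u,u+1)=1/\sqrt{1+4(\Im u)^2\,}$ rewrites this as $\Im h_\varphi(z')>\sqrt{1-r^2\,}/(2r)$. The delicate part is the reverse inclusion $\Pi_r\subset\bigcup_{n\geq 0}\bigl(h_\varphi(V_\varphi(r))-n\bigr)$: now $V_\varphi(r)$ is no longer $\varphi$-absorbing, and my plan is to work instead with \emph{some} $\varphi$-absorbing open set $V\subset\D$ on which $h_\varphi$ is injective (such a $V$ is provided by (HM3) for the canonical model of~$\varphi$). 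The $\varphi$-invariance of~$V$ gives $h_\varphi(V)+1\subset h_\varphi(V)$, so $\bigl(h_\varphi(V)-n\bigr)_{n\in\N}$ is an increasing exhaustion of~$\H$, and standard Carath\'eodory convergence yields $\phd_{h_\varphi(V)-n}\to\phd_\H$ locally uniformly. Fixing $w\in\Pi_r$, so that $\phd_\H(w,w+1)<r$, this convergence provides some large~$n$ with $\phd_{h_\varphi(V)}(w+n,w+n+1)<r$; pulling back via the biholomorphism $h_\varphi|_V\colon V\to h_\varphi(V)$ and using $\phd_\D\leq\phd_V$ gives $z:=(h_\varphi|_V)^{-1}(w+n)\in V_\varphi(r)$, whence $w\in h_\varphi(V_\varphi(r))-n$. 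Invoking Theorem~\ref{Thm:model}\ref{IT_HM-para-PHS} with the base space $\Pi_r=S_{(c,+\infty)}$ then also delivers the type statement, that $\mtilde\varphi$ is parabolic of positive hyperbolic step.

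The main obstacle is precisely this last argument in the positive-step case: unlike in part~\ref{IT_modelV-zero-step}, $V_\varphi(r)$ fails to be absorbing, forcing the detour through an auxiliary absorbing set~$V$ together with a comparison of the intrinsic hyperbolic pseudodistance on $h_\varphi(V)$ with that on~$\H$. What rescues the comparison is the monotone exhaustion $h_\varphi(V)-n\uparrow\H$, which lets the Schwarz\,--\,Pick inequality be effectively reversed in the asymptotic regime near the Denjoy\,--\,Wolff point.
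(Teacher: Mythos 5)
Your proof is correct, and for (HM1), (HM3), and the easy inclusion $h_\varphi(V_\varphi(r))\subset\Pi_r$ in part~\ref{IT_modelV-pos-step} it matches the paper line for line. The genuine divergence is in the hard inclusion $\Pi_r\subset\bigcup_n\bigl(h_\varphi(V_\varphi(r))-n\bigr)$. The paper takes $w\in\Pi_r$, writes $w=h_\varphi(z)-n_0$ via (HM2), and then invokes \cite[Lemma~3.16]{Canonicalmodel}, which asserts that the hyperbolic step $\phd_\D\bigl(\varphi^{\circ m}(z),\varphi^{\circ(m+1)}(z)\bigr)$ decreases to $\phd_\H\bigl(h_\varphi(z),h_\varphi(\varphi(z))\bigr)=\phd_\H(w,w+1)<r$; this lands $\varphi^{\circ m_0}(z)$ in $V_\varphi(r)$ for large~$m_0$. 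You instead take an absorbing $V$ with $h_\varphi|_V$ injective, observe $h_\varphi(V)+1\subset h_\varphi(V)$, exhaust $\H$ by the increasing domains $h_\varphi(V)-n$, and use monotone (Hejhal--Carath\'eodory) convergence of the hyperbolic metrics to drive $\phd_{h_\varphi(V)-n}(w,w+1)$ below~$r$; pulling back by the biholomorphism $h_\varphi|_V$ and using $\phd_\D\le\phd_V$ then yields the preimage in $V_\varphi(r)$. These are two routes to the same asymptotic identity between the step in $\D$ and the pseudohyperbolic step in the base space: the paper outsources it to an Arosio--Bracci lemma, while you re-derive the relevant special case from the domain-monotonicity of hyperbolic metrics. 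Your version is more self-contained but leans on the version of Carath\'eodory kernel convergence valid for general (not necessarily simply connected) increasing hyperbolic domains, since (HM3) does not a priori supply a simply connected absorbing~$V$; that is a true theorem (Hejhal), but it deserves an explicit reference since the unqualified phrase ``Carath\'eodory convergence'' usually evokes only the simply connected case. Your treatment of part~\ref{IT_modelV-zero-step} via Remark~\ref{Re:almostasemigroup} with a singleton compact is a cosmetic variant of the paper's direct computation and is equally valid.
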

\begin{proof}For brevity, we denote $V_r:=V_\varphi(r)$.

\StepP{\ref{IT_modelV-zero-step}}	
	Note that
	$$
	h_\varphi\circ f\circ\mtilde \varphi=h_\varphi\circ\varphi\circ f=h_\varphi\circ f +1.
	$$
	Hence, using $f(\D)=V_r$, we have that $\Mdd:={(S, h_\varphi\circ f,z\mapsto z+1)}$ is a semimodel of $\mtilde \varphi$, where $S:=\bigcup_{n=0}^\infty (h_\varphi(V_r)-n)$.
	Take $w\in\C$. By the absorbing property of the model, there exist $z\in\D$ and $n_0\in\N$ such that $w=h_\varphi(z)-n_0$. By Theorem~\ref{Thm:V-sets}\+\ref{IT_V-sets(5)}, $V_r$ is $\varphi$-absorbing. Hence, there exists ${m_0\in\N}$ such that ${\varphi^{\circ m_0}(z)\in V_r}$. Therefore,
		$$
		w=h_\varphi(z)+m_0-(n_0+m_0)=h_\varphi(\varphi^{\circ m_0}(z))-(n_0+m_0)\in \bigcup_{n=0}^\infty (h_\varphi(V_r)-n)=S.
		$$
Thus $S=\C$. By Theorem~\ref{Thm:V-sets}\+\ref{IT_V-sets(4)}, $h_\varphi$ is univalent in~$V_r$. Hence, ${h_\varphi\circ f}$ is univalent in~$\UD$ and as a result, the semimodel~$\Mdd$ is actually a holomorphic model for~$\mtilde\varphi$. By Theorem~\ref{Thm:model}\+\ref{IT_HM-para-0HS}, it follows that~$\mtilde\varphi$ is parabolic of zero hyperbolic step, as desired.

\StepP{\ref{IT_modelV-pos-step}}
	We assume that the base space of the canonical model of $\varphi$ is the upper half-plane~$\H$. The proof in other case, that is, when the base domain is~$-\H$, is completely similar.

 Take $w\in\H$. Then,
	$$
	\phd_\H(w,w+1)=\left|\dfrac{w-(w+1)}{w-\overline{(w+1)}}\right|=\left|\dfrac{1}{-1+2i\Im w}\right|=\frac{1}{\sqrt{1+4\Im^2w}}.
	$$
Therefore, $w\in \Pi_r$ if and only  $\Im w>\frac12\sqrt{(1-r^2)/r^2\,}.$

Let us show that
\begin{equation}\label{EQ_Pi-absorbing}
	\Pi_r=\bigcup_{n=0}^\infty (h_\varphi(V_r)-n).
\end{equation}
	Firstly, take $w\in\bigcup_{n=0}^\infty (h_\varphi(V_r)-n)$. Then $w=h_\varphi(z)-n$ for some ${z\in V_r}$ and some ${n\in\N}$.
	It follows that
	$$ \phd_\H(w,w+1)=\phd_\H\big(h_\varphi(z)-n,h_\varphi(z)-n+1\big)=\phd_\H\big(h_\varphi(z),h_\varphi(\varphi(z))\big) \leq \phd_\D\big(z,\varphi(z)\big)<r,
	$$
	where we used the invariant Schwarz\,--\,Pick Lemma, see e.g. \cite[Theorem 6.4]{Beardon-Minda}.
	Hence, ${w\in\Pi_r}$.

Reciprocally, take $w\in\Pi_r$. By the absorbing property of the model, see~(HM2) in Definition~\ref{DF_holomorphic-model}, there exist ${z\in\D}$ and ${n_0\in\N}$ such that $w=h_\varphi(z)-n_0$. Therefore, by \cite[Lemma 3.16]{Canonicalmodel},
$$
\lim_{m\to\infty}
	\phd_\D\big(\varphi^{\circ m}(z),\varphi^{\circ (m+1)}(z)\big)=
        \phd_\H\big(h_\varphi(z),h_\varphi(\varphi(z))\big)=
           \phd_\H\big(h_\varphi(z)-n_0,h_\varphi(z)-n_0+1\big) < r.
	$$
	Therefore, there exists $m_0\in\N$ such that $\phd_\D\big(\varphi^{\circ m_0}(z),\varphi^{\circ (m_0+1)}(z)\big)<r$. Thus $\varphi^{\circ m_0}(z)\in V_r$. Finally
	$$
	w=h_\varphi(z)+m_0-(n_0+m_0)=h_\varphi(\varphi^{\circ m_0}(z))-(n_0+m_0)\in \bigcup_{n=0}^\infty (h_\varphi(V_r)-n).
	$$
	
	Now, note that
	$
	h_\varphi\circ f\circ\mtilde \varphi=h_\varphi\circ\varphi\circ f=h_\varphi\circ f +1.
	$ Recall also that ${f(\D)=V_r}$. In combination with~\eqref{EQ_Pi-absorbing}, this means that $\Mdd:={(\Pi_r, h_\varphi\circ f,z\mapsto z+1)}$ is a semimodel of $\mtilde \varphi$.

Furthermore, by Theorem~\ref{Thm:V-sets}\+\ref{IT_V-sets(4)}, $h_\varphi$ is univalent in~$V_r$. Hence, ${h_\varphi\circ f}$ is univalent in~$\UD$ and as a result, the semimodel~$\Mdd$ is actually a holomorphic model for~$\mtilde\varphi$. By Theorem~\ref{Thm:model}\+\ref{IT_HM-para-PHS}, it follows that~$\mtilde\varphi$ is parabolic of positive hyperbolic step, as desired.
\end{proof}

We will make use of the following corollary of the above proposition.
Set $V:=V_{\varphi}(1/3)$ and let $f$ be a conformal map of~$\UD$ onto~$V$. Then ${\varphi(V)\subset V}$ and ${\psi(V)\subset V}$ for any~${\psi\in\Zn(\varphi)}$, see Theorem~\ref{Thm:V-sets}\+\ref{IT_V-sets(1)} and Lemma~\ref{Le:V-sets}\+\ref{V-sets-fact1}. Consider the map
\begin{equation}\label{Definition_B_varphi}
 \mathfrak P_\varphi:\Zn(\varphi)\to\Zn(\mtilde\varphi);~\psi\mapsto\mtilde\psi:={f^{-1}\circ \psi\circ f}
\end{equation}
that transforms a self-map $\psi\in\Hol(\UD)$ commuting with~$\varphi$ into the self-map $\mtilde \psi\in \Hol(\D)$ commuting with $\mtilde \varphi:={f^{-1}\circ \varphi\circ f\in \Hol(\D)}$.

\begin{corollary} \label{Cor:projection} The map $\mathfrak P_\varphi:\Zn(\varphi)\to\Zn(\mtilde\varphi)$ defined in \eqref{Definition_B_varphi} is a homeomorphism onto its image~$\mathfrak P_\varphi(\Zn(\varphi))$, which is a relatively closed subset of~$\Zn(\mtilde\varphi)$.
\end{corollary}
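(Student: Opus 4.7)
The plan is to verify well-definedness, injectivity, continuity of $\mathfrak P_\varphi$, and then to handle the closed-image and inverse-continuity claims together through a single normal-family argument. For well-definedness, I would observe that by Theorem~\ref{Thm:V-sets}\+\ref{IT_V-sets(1)} and Lemma~\ref{Le:V-sets}\+\ref{V-sets-fact1}, the domain $V$ is invariant under both $\varphi$ and any $\psi\in\Zn(\varphi)$. Since $f:\UD\to V$ is a biholomorphism, the formula $\mtilde\psi:=f^{-1}\circ\psi\circ f$ defines a holomorphic self-map of~$\UD$, and a direct algebraic computation shows that it commutes with~$\mtilde\varphi$. Injectivity is then an immediate consequence of the identity principle: if $\mathfrak P_\varphi(\psi_1)=\mathfrak P_\varphi(\psi_2)$, then $\psi_1=\psi_2$ on the nonempty open set~$V\subset\UD$, hence on all of~$\UD$.

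Continuity of $\mathfrak P_\varphi$ would follow from joint continuity of composition in the topology of locally uniform convergence. Concretely, fix a compact $K\Subset\UD$; then $f(K)$ is a compact subset of~$V$, and if $\psi_n\to\psi$ locally uniformly in~$\UD$, then $\psi_n\to\psi$ uniformly on~$f(K)$. Since $\psi(f(K))$ is a compact subset of the open set~$V$, for $n$ sufficiently large all the values $\psi_n(f(K))$ lie in some fixed compact subset $L\Subset V$ on which $f^{-1}$ is uniformly continuous; the standard composition estimate then yields $f^{-1}\circ\psi_n\circ f\to f^{-1}\circ\psi\circ f$ uniformly on~$K$.

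The main obstacle, and what I would address with a single Montel-type argument, is to pass from convergence on~$V$ back to convergence on all of~$\UD$. Suppose $\mtilde\psi_n:=\mathfrak P_\varphi(\psi_n)\to\mtilde\psi$ locally uniformly in~$\UD$ with $\mtilde\psi\in\Zn(\mtilde\varphi)$. The family $\{\psi_n\}\subset\Hol(\UD,\UD)$ is locally bounded, hence normal, so every subsequence admits a further subsequence converging locally uniformly on~$\UD$ to some holomorphic map $\psi:\UD\to\overline{\UD}$. On~$V$ we have $\psi_n=f\circ\mtilde\psi_n\circ f^{-1}\to f\circ\mtilde\psi\circ f^{-1}$, whose image lies in $V\subset\UD$; hence no such subsequential limit can be a constant in~$\partial\UD$, and by the open mapping theorem each limit in fact takes values in~$\UD$. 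Since any two subsequential limits must coincide with $f\circ\mtilde\psi\circ f^{-1}$ on the open set~$V$, the identity principle forces them to agree on all of~$\UD$; therefore the full sequence $\psi_n$ converges locally uniformly on~$\UD$ to a single $\psi\in\Hol(\UD,\UD)$. Passing to the limit in $\psi_n\circ\varphi=\varphi\circ\psi_n$ gives $\psi\in\Zn(\varphi)$, and by construction $\mathfrak P_\varphi(\psi)=\mtilde\psi$. This simultaneously establishes that $\mtilde\psi$ lies in $\mathfrak P_\varphi(\Zn(\varphi))$ (so the image is relatively closed in~$\Zn(\mtilde\varphi)$) and that $\mathfrak P_\varphi^{-1}(\mtilde\psi_n)=\psi_n\to\psi=\mathfrak P_\varphi^{-1}(\mtilde\psi)$, which is the desired continuity of~$\mathfrak P_\varphi^{-1}$.
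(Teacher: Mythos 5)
Your proof is correct and follows essentially the same route as the paper: continuity of $\mathfrak P_\varphi$ via invariance of $V$ and composition estimates, then normality of $(\psi_n)$ plus the intertwining relation $\psi_n|_V = f\circ\mtilde\psi_n\circ f^{-1}$ and the identity principle to recover the full limit $\psi\in\Zn(\varphi)$ and conclude both closedness of the image and continuity of $\mathfrak P_\varphi^{-1}$ in one stroke. You make explicit a couple of steps the paper leaves implicit (ruling out boundary-constant subsequential limits via the open mapping theorem, and the well-definedness/injectivity checks), but the argument is the same.
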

\begin{proof}
 Firstly, $\mathfrak P_\varphi$ is injective as a consequence of the identity principle for holomorphic functions. Further, consider a sequence ${(\psi_n)\subset\Zn(\varphi)}$ and denote $\mtilde\psi_n:=\mathfrak P_\varphi(\psi_n)$ for all ${n\in\Natural}$.
 If $(\psi_n)$ converges to some ${\psi\in\Zn(\varphi)}$ then, by Lemma \ref{Le:V-sets}\+\ref{V-sets-fact1},  $\psi(V)\subset V$ and hence
$$
  \mtilde\psi_n =f^{-1}\circ\psi_n\circ f~\to~f^{-1}\circ\psi\circ f=\mathfrak P_\varphi(\psi)\quad\text{as~$~n\to+\infty$},
$$
i.e. the map $\mathfrak P_\varphi$ is continuous. Conversely, suppose that $(\mtilde\psi_n)$ converges to some ${\mtilde\psi\in\Zn(\mtilde\varphi)}$. Passing to the limit in ${\psi_n\circ f=f\circ\mtilde\psi_n}$, we see that the limit~$\psi$ of any convergent subsequence of~$(\psi_n)$ satisfies
\begin{equation}\label{EQ_Gum-EQ}
\psi\circ f=f\circ\mtilde\psi.
\end{equation}
By a standard argument based on normality of the family~$\Hol(\UD)$, the latter implies in turn that $(\psi_n)$ converges to some $\psi$ with $\psi(V)=f(\mtilde\psi(\UD))\subset f(\UD)=V$. In particular, ${\psi\in\Hol(\UD)}$. Therefore, we may pass to the limit in ${\psi_n\circ\varphi}={\varphi\circ\psi_n}$ and conclude that ${\psi\in\Zn(\varphi)}$. Appealing again to~\eqref{EQ_Gum-EQ}, we get $\mtilde\psi=\mathfrak P_\varphi(\psi)$. This shows that $\mathcal Q:=\mathfrak P_\varphi(\Zn(\varphi))$ is a relatively closed in~$\Zn(\mtilde\varphi)$ and that $\mathfrak P_\varphi^{-1}$ is continuous in~$\mathcal Q$.
\end{proof}

Finally, we recall the following uniqueness result for the Koenigs function of parabolic self-maps of zero hyperbolic step.

\begin{theorem} [\protect{\cite[Theorem 3.1]{CDP-Abel}}]\label{Thm:uniqueness}
Let $\varphi \in \Hol(\D)$ be a parabolic self-map of
zero hyperbolic step with Koenigs function  ${h_{\varphi} :\mathbb{D}\rightarrow \mathbb{C}}$.
Let $z_{0}\in \mathbb{D}$ such that ${h'_{\varphi}(z_{0})\neq 0}$. Further, let
${h_{1}: \mathbb{D}\rightarrow \mathbb{C}}$ be a holomorphic function satisfying the Abel
functional equation ${h_{1} \circ \varphi =h_{1} +1}$. Then the following conditions are equivalent:
\begin{romlist}
 \item $h_{1}=h_{\varphi}+c$ for some constant~${c\in\C}$;
 \item  there exist ${r>0}$ and ${N\in \mathbb{N}}$ such that $h_{1}$ is
univalent on every hyperbolic disc of center $\varphi^{\circ n}(z_{0})$ and
radius ${r}$ for all $n>N.$
\end{romlist}
\end{theorem}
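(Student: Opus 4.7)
The implication (i)~$\Rightarrow$~(ii) is the easy direction. I would note that $V_\varphi(1/9)$ is $\varphi$-absorbing by Theorem~\ref{Thm:V-sets}\+\ref{IT_V-sets(5)}, so $\varphi^{\circ n}(z_0)\in V_\varphi(1/9)$ for all $n$ large enough; then by Lemma~\ref{Le:V-sets}\+\ref{V-sets-fact4a} the pseudohyperbolic disc $\phdisk(\varphi^{\circ n}(z_0),1/9)$ is contained in $V_\varphi(1/3)$, on which $h_\varphi$ is univalent (Theorem~\ref{Thm:V-sets}\+\ref{IT_V-sets(4)}) and so is $h_1=h_\varphi+c$. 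Any sufficiently small hyperbolic radius $r$ makes the corresponding hyperbolic discs fit inside such a pseudohyperbolic disc, and (ii) follows.

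The heart of the statement is (ii)~$\Rightarrow$~(i). The plan is first to produce a factorization $h_1=\beta\circ h_\varphi$ and then to show that $\beta$ is affine. Let $S_1:=\bigcup_{n\ge0}(h_1(\D)-n)$; the identity $h_1\circ\varphi=h_1+1$ implies that $S_1$ is a connected open subset of $\C$ on which $z\mapsto z+1$ restricts to an automorphism, so $(S_1,h_1,z\mapsto z+1)$ is a holomorphic semimodel of~$\varphi$. Applying Lemma~\ref{Le:morphism} with the canonical model $(\C,h_\varphi,z\mapsto z+1)$ produces an entire map $\beta:\C\to S_1\subset\C$ satisfying $h_1=\beta\circ h_\varphi$ and $\beta(z+1)=\beta(z)+1$; in particular, $p(z):=\beta(z)-z$ is entire and $1$-periodic. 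The claim will be proved once $\beta$ is shown to be injective on~$\C$, because then $\beta$ is affine and the intertwining relation forces $\beta(z)=z+c$, giving $h_1=h_\varphi+c$.

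The core geometric step exploits the zero hyperbolic step hypothesis. Write $U:=h_\varphi(V_\varphi(1/3))$ and $w_n:=h_\varphi(\varphi^{\circ n}(z_0))=h_\varphi(z_0)+n$; by Theorem~\ref{Thm:V-sets}\+\ref{IT_V-sets(4)} the map $h_\varphi|_{V_\varphi(1/3)}$ is a conformal isomorphism onto~$U$. Applying Remark~\ref{Re:almostasemigroup} to the $\varphi$-absorbing set $V_\varphi(1/3)$ with $K=\overline{B_R(0)}$ for each $R>0$ gives $\dist(w_n,\partial U)\to\infty$ as $n\to\infty$. Standard lower bounds for the Euclidean size of hyperbolic discs in a planar domain in terms of the boundary distance then yield Euclidean discs $B(w_n,R_n)$, with $R_n\to\infty$, contained in the hyperbolic disc of $U$ of fixed radius $r$ around $w_n$. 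Because $V_\varphi(1/3)\subset\D$, its hyperbolic metric dominates that of $\D$ restricted to $V_\varphi(1/3)$, hence the hyperbolic disc of $V_\varphi(1/3)$ of radius $r$ around $\varphi^{\circ n}(z_0)$ sits inside the analogous $\D$-hyperbolic disc $\Delta_n$; transporting by the conformal map $h_\varphi|_{V_\varphi(1/3)}$ gives $B(w_n,R_n)\subset h_\varphi(\Delta_n)$, provided $r$ is small and $n$ large enough that $\Delta_n\subset V_\varphi(1/3)$ (ensured by the first paragraph, modulo the pseudohyperbolic/hyperbolic conversion). By~(ii), $h_1$ is injective on $\Delta_n$, and since $h_\varphi$ is also injective on $\Delta_n$, the identity $h_1=\beta\circ h_\varphi$ forces $\beta$ to be injective on $h_\varphi(\Delta_n)\supset B(w_n,R_n)$.

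To conclude, I would use $\beta(z+1)=\beta(z)+1$: the $1$-periodicity of $p$ makes injectivity of $\beta$ on $B(w_n,R_n)=B(w_0,R_n)+n$ equivalent to injectivity on $B(w_0,R_n)$, so letting $n\to\infty$ yields $\beta$ injective on every Euclidean disc about~$w_0$ and therefore on~$\C$. An injective entire function is affine; the relation $\beta(z+1)=\beta(z)+1$ then forces slope~$1$, so $\beta(z)=z+b$ and $h_1=h_\varphi+b$, completing (i). The main obstacle is the geometric step of the third paragraph: quantifying how large the image $h_\varphi(\Delta_n)$ of the hyperbolic discs becomes in $\C$, a point where the zero hyperbolic step hypothesis enters essentially through Remark~\ref{Re:almostasemigroup} and where the three different hyperbolic metrics (of $\D$, of $V_\varphi(1/3)$, and of $U$) must be carefully compared.
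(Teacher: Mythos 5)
The paper does not contain its own proof of Theorem~\ref{Thm:uniqueness}: it is quoted verbatim from \cite[Theorem~3.1]{CDP-Abel} (Contreras--D\'iaz-Madrigal--Pommerenke, J.~London Math.~Soc.~2007), so there is no in-text argument to compare against. Judged on its own terms, your proof sketch is correct and complete in outline, and its backbone --- factor $h_1=\beta\circ h_\varphi$ via Lemma~\ref{Le:morphism} using that the base space of the canonical model is all of~$\C$, use condition~(ii) together with domain monotonicity of the hyperbolic metric and the Koebe distortion estimate to force $\beta$ to be injective on Euclidean discs $B(w_n,R_n)$ with $R_n\to\infty$, translate these back to a fixed center via $\beta(z+1)=\beta(z)+1$, and conclude that an injective entire function is affine --- is exactly where the zero-hyperbolic-step hypothesis does its work, through Remark~\ref{Re:almostasemigroup}.

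A few minor points worth tightening. First, when you introduce the semimodel $(S_1,h_1,z\mapsto z+1)$ you should record that $S_1$ is open and connected (it is an increasing union of the connected translates $h_1(\D)-n$, increasing because $h_1(\D)+1=h_1(\varphi(\D))\subset h_1(\D)$) and that $S_1+1=S_1$, so that $z\mapsto z+1$ is genuinely an automorphism of $S_1$; this is what makes Lemma~\ref{Le:morphism} applicable. Second, in Step~3 you actually do not need $\Delta_n\subset V_\varphi(1/3)$: it suffices that $h_\varphi$ be injective on the smaller set $B_{\hd_{V_\varphi(1/3)}}(\varphi^{\circ n}(z_0),r)\subset V_\varphi(1/3)$ and that $h_1$ be injective on $\Delta_n$, since the domain-monotonicity inclusion $B_{\hd_{V_\varphi(1/3)}}(\varphi^{\circ n}(z_0),r)\subset\Delta_n$ already gives $\beta$ injective on the corresponding $U$-hyperbolic disc. (You may, of course, also shrink the $r$ from condition~(ii) without loss, since the Koebe lower bound gives $R_n\asymp r\cdot\dist(w_n,\partial U)\to\infty$ for any fixed $r>0$.) Finally, note that the hypothesis $h_\varphi'(z_0)\neq0$ is not used anywhere in your argument; this is not an error, since for large~$n$ the point $\varphi^{\circ n}(z_0)$ lands in $V_\varphi(1/3)$ where $h_\varphi$ is univalent, so the condition at $z_0$ itself is immaterial to both implications as you have set them up, but the discrepancy with the stated hypotheses is worth flagging when citing the original source.
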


\section{Simultaneous linearization and pseudo-iteration semigroup}
\label{S_general_facts}
In~\cite{Cowen-comm} Cowen introduced  the notion of pseudo-iteration semigroup \label{pseudo-iteration} playing the central role in his study of commuting holomorphic self-maps of~$\UD$.  For a non-elliptic self-map  ${\varphi\in\Hol(\UD)}$, this notion can be defined as follows. Using conformal mappings one can pass from the canonical holomorphic model for~$\varphi$ to another holomorphic model ${(S,f,\alpha)}$ for~$\varphi$ such that the base space~$S$ is either~$\UD$ or~$\C$. A self-map ${\psi\in \Hol(\D)}$ is said to belongs to the {\sl pseudo-iteration semigroup} of~$\varphi$ if there exists a M\"obius transformation~$\beta$ of the Riemann sphere~$\overline\C$ such that
$$
\alpha\circ\beta=\beta\circ\alpha \quad\text{and}\quad f\circ \psi=\beta \circ f.
$$
The second equality implies, of course, that $\beta(S)\subset S$. Furthermore, using the fact that commuting M\"obius transformations  have the same fixed points (except for the case when both are involutions) see e.g. \cite[Corollary~1.6.21]{Abate2}, and passing back to the canonical holomorphic model, it is not difficult to see that ${\psi\in \Hol(\D)}$ belongs to the  pseudo-iteration semigroup of a non-elliptic self-map ${\varphi\in\Hol(\UD)}$ if and only if
$$
  h_\varphi\circ \psi=h_\varphi+c \quad\text{for some constant~$~c\in\C$.}
$$
Since the Koenigs function $h_\varphi$ of~$\varphi$, by the very definition, satisfies Abel's equation ${h_\varphi\circ \varphi=h_\varphi+1}$,  this explains the relation of the pseudo-iteration semigroup to the notion of simultaneous linearization, see Definition~\ref{DF_SimLin} below. As we have seen in~\cite{CDG-Centralizer} for univalent self-maps and as it will become evident in~\cite{CDG-positive-h.step}, the simultaneous linearization turns out to be a more convenient tool than the general concept of pseudo-iteration when we restrict the study of commuting self-maps to the (most interesting and complicated) non-elliptic case.

\begin{definition}\label{DF_SimLin}
 Let $\varphi,\psi\in\Hol(\UD)$ be two self-maps at least one of which is non-elliptic. We say that $\varphi$ and $\psi$ admit \textsl{simultaneous linearization} if there exist constants ${c_1,c_2\in\Complex}$ with ${|c_1|^2+|c_2|^2\neq0}$ and a holomorphic function ${h:\UD\to\C}$ satisfying
 \begin{equation}\label{EQ_SimLinSYS}
   h\circ \varphi=h+c_{1}, \qquad h\circ \psi=h+c_{2}.
 \end{equation}
\end{definition}

\begin{remark}
It is worth mentioning that simultaneous linearization, as a general concept, have been studied in connection with commutativity in other contexts, e.g. for torus diffeomorphisms and holomorphic germs; see \cite{SL,SLJ1,SLJ2} and references therein.
\end{remark}

The main result of this preparatory section, Proposition~\ref{PR_affine-part} below, shows that the simultaneous linearization, with a slight additional assumption imposed on the function~$h$, provides a sufficient condition for commutativity.

\begin{proposition}\label{PR_affine-part}
Let $\varphi, \psi \in\Hol(\UD)$ be two parabolic self-maps having the same Denjoy\,--\,Wolff point~$\tau$.
If there exists $c_{1}, c_{2}\in \C$ and a holomorphic function $h:\D\to \C$ which is univalent in some domain $V\subset \D$ isogonal at $\tau$ and such that
\begin{equation*}
h\circ \varphi=h+c_{1}, \qquad h\circ \psi=h+c_{2}.
\end{equation*}
Then $\varphi \circ \psi=\psi\circ \varphi$.\smallskip

In particular, if  ${h_\varphi\circ\psi}={h_\varphi+c}$ for some constant~${c\in\Complex}$, then ${\psi\in\Zn(\varphi)}$.
\end{proposition}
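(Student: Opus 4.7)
\medskip

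\textbf{Proof plan.}
The first step is purely algebraic: from $h\circ\varphi=h+c_{1}$ and $h\circ\psi=h+c_{2}$ I would compose the two relations in both orders to obtain
\[
  h\circ(\varphi\circ\psi)\;=\;(h\circ\varphi)\circ\psi\;=\;h\circ\psi+c_{1}\;=\;h+c_{1}+c_{2},
\]
and symmetrically $h\circ(\psi\circ\varphi)=h+c_{1}+c_{2}$. Therefore $h\circ(\varphi\circ\psi)\equiv h\circ(\psi\circ\varphi)$ on all of~$\UD$. Since $h$ is only univalent on the subdomain~$V$, this does not immediately give ${\varphi\circ\psi=\psi\circ\varphi}$. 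My plan is to exhibit a non-empty open set ${U\subset\UD}$ on which \emph{both} $\varphi\circ\psi$ and $\psi\circ\varphi$ take values in~$V$; injectivity of~${h|_{V}}$ will then force equality on~$U$, and the identity principle will propagate it to all of~$\UD$.

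Building~$U$ is the core of the argument and is where the geometric hypotheses enter. By isogonality of~$V$ at~$\tau$, a truncated Stolz angle ${\Delta_{0}\subset V}$ with vertex at~$\tau$ is available. Because $\varphi$ and~$\psi$ are parabolic with common Denjoy\,--\,Wolff point~$\tau$, the Julia\,--\,Wolff\,--\,Carath\'eodory theorem yields the non-tangential asymptotics
\[
  \varphi(z)-\tau\;\sim\;z-\tau,\qquad \psi(z)-\tau\;\sim\;z-\tau \qquad\text{as}~z\to\tau~\text{inside any Stolz angle at}~\tau.
\]
A direct consequence is that for every Stolz angle~$\Delta$ at~$\tau$ and every strictly wider Stolz angle~$\Delta'$ there exists ${\rho>0}$ with $\varphi\bigl(\Delta\cap B(\tau,\rho)\bigr)\subset\Delta'$, and likewise for~$\psi$. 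Starting from~$\Delta_{0}$ and applying this aperture-shrinking step twice---first to absorb the action of~$\psi$, then that of~$\varphi$---I obtain a Stolz angle~$\Delta_{2}$ and a radius ${\rho_{2}>0}$ such that ${(\varphi\circ\psi)\bigl(\Delta_{2}\cap B(\tau,\rho_{2})\bigr)\subset V}$. The symmetric construction with the roles of~$\varphi$ and~$\psi$ interchanged produces an analogous set for~$\psi\circ\varphi$, and the intersection of the two yields the required non-empty open set~$U$.

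For the second assertion, one just applies the general case with ${h:=h_{\varphi}}$, ${c_{1}:=1}$, ${c_{2}:=c}$, and ${V:=V_{\varphi}(1/3)}$: by Theorem~\ref{Thm:V-sets}\+\ref{IT_V-sets(3)}\+\ref{IT_V-sets(4)}, this~$V$ is isogonal at~$\tau$ and $h_{\varphi}$ is univalent on it, while ${h_{\varphi}\circ\varphi=h_{\varphi}+1}$ holds by definition of the Koenigs function.

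The only delicate point is the Stolz-angle tracking that produces~$U$. Parabolicity is the borderline case $\varphi'(\tau)=1$ of the Julia\,--\,Wolff\,--\,Carath\'eodory estimates, so no strict contraction of apertures is available; the argument must be built on the non-tangential limit ${\bigl(\varphi(z)-\tau\bigr)/(z-\tau)\to 1}$ and proceed in the right order of nesting, choosing first a narrower Stolz angle and then a smaller radius, before composing with~$\psi$.
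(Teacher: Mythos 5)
Your argument is correct and follows essentially the same strategy as the paper's: observe algebraically that $h\circ(\varphi\circ\psi)=h\circ(\psi\circ\varphi)=h+c_1+c_2$, produce an open set on which both $\varphi\circ\psi$ and $\psi\circ\varphi$ take values in the injectivity domain~$V$, and conclude by the identity principle. The only substantive difference is that where you re-derive the geometric ingredient by hand via Julia--Wolff--Carath\'eodory asymptotics and nested truncated Stolz angles applied to $\varphi$ and $\psi$ separately, the paper instead invokes a packaged result (Lemma~\ref{LM_isogonality}, taken from~\cite{GuKouMouRoth}) applied \emph{once} to each composition $f_1:=\varphi\circ\psi$ and $f_2:=\psi\circ\varphi$ with $\sigma:=\tau$: since $f_1,f_2\in\Hol_\tau(\UD)$ with finite angular derivative at~$\tau$, the lemma yields domains $U_1,U_2$ isogonal at~$\tau$ with $f_k(U_k)\subset V$, and one takes $B:=U_1\cap U_2$. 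Your version is more self-contained; the paper's is shorter because it avoids the double nesting. One small slip in your description of the nesting: since $\varphi\circ\psi$ applies $\psi$ first and $\varphi$ second, to land in~$V$ you should absorb $\varphi$ first (choose $\Delta_1$ so that $\varphi(\Delta_1\cap B(\tau,\rho_1))\subset V$) and then absorb $\psi$ (choose $\Delta_2$ so that $\psi(\Delta_2\cap B(\tau,\rho_2))\subset\Delta_1\cap B(\tau,\rho_1)$); as written, your order produces $(\psi\circ\varphi)(\Delta_2\cap B(\tau,\rho_2))\subset V$. This is harmless because you also perform the symmetric construction, but the labels should be swapped.
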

The hypothesis that $\varphi$ and $\psi$ have the same DW-point~$\tau$ cannot be eliminated from the above proposition, see Example~\ref{EX_DW-importante} in the next section.

A related result can be found in \cite[Proposition~2.2]{Cowen-comm}. However, a principal difference is that Proposition~\ref{PR_affine-part} gives a sufficient condition for $\varphi$ and $\psi$ to commute \textit{without} assuming that $\psi$ is a pseudo-iteration of~$\varphi$. Accordingly, the simultaneous linearization \textit{a priori} does not have to be given by the Koenigs function~$h_\varphi$ of~$\varphi$. It does so (and the condition becomes also necessary) when $\varphi$ is of zero hyperbolic step; see Theorem~\ref{Thm:zero-hyperbolic-step}\+\ref{IT_THzero-SimLin} in the next section. In contrast, for the case of positive hyperbolic step it will be shown in~\cite{CDG-positive-h.step}, see also~\cite{CDG-Centralizer}, that $h$ in~\eqref{EQ_SimLinSYS} can be essentially different from~$h_\varphi$.

For the proof of Proposition~\ref{PR_affine-part} we need the following lemma, which we regard as known to specialists and which can be easily deduced, e.g., from  \cite[Lemma~A.1 and Proposition~A.6]{GuKouMouRoth}.
\begin{lemma}\label{LM_isogonality}
Let $f\in\Hol(\UD)$ and~$\sigma\in\UC$. Suppose that the angular limit~$f(\sigma):=\anglim_{z\to\sigma}f(z)$ exists and belongs to~$\UC$ and suppose that the angular derivative
$$
 f'(\sigma):=\anglim_{z\to\sigma}\frac{f(z)-f(\sigma)}{z-\sigma}
$$
is finite. Then the preimage $f^{-1}(V)$ of any domain ${V\subset\UD}$ isogonal at~$f(\sigma)$ contains a domain ${U\subset\UD}$ isogonal at~$\sigma$.
\end{lemma}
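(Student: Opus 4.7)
The plan is to transport the isogonality of $V$ at $f(\sigma)$ back to $\sigma$ via the Julia--Wolff--Carath\'eodory theorem, which is the essential analytic ingredient of the proof.

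First I fix an arbitrary Stolz angle $S\subset\UD$ with vertex at~$\sigma$. Since the angular limit $f(\sigma)$ lies in~$\UC$ and the angular derivative $f'(\sigma)$ is finite, the Julia--Wolff--Carath\'eodory theorem (which is precisely the content extracted in \cite[Proposition A.6]{GuKouMouRoth}) provides a Stolz angle $S'\subset\UD$ with vertex at~$f(\sigma)$ together with a radius $\delta_1=\delta_1(S)>0$ such that
\[
 f\bigl(S\cap\{z\in\UD:|z-\sigma|<\delta_1\}\bigr)\subset S'.
\]
In other words, the finite-angular-derivative hypothesis forces $f$ to map non-tangential approach regions at~$\sigma$ into non-tangential approach regions at~$f(\sigma)$; this is the only deep step in the argument.

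Next I exploit the isogonality of~$V$ at~$f(\sigma)$ applied to the Stolz angle~$S'$ just produced: there exists $\varepsilon>0$ with $\{w\in S':|w-f(\sigma)|<\varepsilon\}\subset V$. The existence of the angular limit $f(\sigma)=\anglim_{z\to\sigma}f(z)$ then yields $\delta=\delta(S)\in(0,\delta_1]$ for which $|f(z)-f(\sigma)|<\varepsilon$ whenever $z\in S$ satisfies $|z-\sigma|<\delta$. Chaining these inclusions, the truncated Stolz angle $S_\delta:=S\cap\{|z-\sigma|<\delta\}$ lies inside~$f^{-1}(V)$.

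To manufacture a single domain~$U$ rather than a family of truncated angles, I would apply the previous step once to a fixed reference Stolz angle (say, one with opening parameter $M_0=2$) in order to exhibit a radial tail $T:=\{r\sigma:r_0<r<1\}$ contained in $f^{-1}(V)$. Then I would define $U$ to be the connected component of the open set $f^{-1}(V)$ containing~$T$. For an arbitrary Stolz angle $S$ at~$\sigma$, the truncated piece $S_\delta$ constructed in the previous step contains a sub-tail of~$T$ (possibly after further shrinking $\delta$ to ensure $1-\delta>r_0$), hence meets~$T$ and therefore lies in the same connected component~$U$. Consequently every Stolz angle is eventually contained in~$U$, which is precisely the isogonality of~$U$ at~$\sigma$. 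The only nontrivial ingredient is the Julia--Wolff--Carath\'eodory input; the assembly of~$U$ is routine continuity and connectedness bookkeeping of the type packaged in \cite[Lemma A.1]{GuKouMouRoth}.
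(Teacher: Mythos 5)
Your argument is correct and follows exactly the route the paper intends: the paper states this lemma without proof, deferring to the Julia--Wolff--Carath\'eodory-type results in \cite{GuKouMouRoth}, and your proof is a valid realization of that deduction (JWC maps truncated Stolz angles at~$\sigma$ into Stolz angles at~$f(\sigma)$, isogonality of~$V$ plus the angular limit pulls the truncations into~$f^{-1}(V)$, and the connected component containing a radial tail assembles the domain~$U$). The only cosmetic slip is the remark about ``further shrinking $\delta$ to ensure $1-\delta>r_0$'': the radial tail of any truncated Stolz angle and the reference tail~$T$ both terminate at~$\sigma$, so they automatically intersect and no adjustment is needed.
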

For a point~$\tau\in\overline\UD$, denote by $\Hol_\tau(\UD)$ the family of self-maps consisting of~$\id_\UD$ and all~${\psi\in\Hol(\UD)\setminus\{\id_\UD\}}$ whose DW-point coincides with~$\tau$.
\begin{proof}[\proofof{Proposition~\ref{PR_affine-part}}] Denote $f_1:={\varphi\circ\psi}$ and $f_2:={\psi\circ\varphi}$. Both self-maps belong to $\Hol_{\tau}(\D)$. In particular, both satisfy the hypothesis of Lemma \ref{LM_isogonality} with $\sigma=\tau$.
Observe first of all that it follows easily from the hypothesis  that
\begin{equation}\label{EQ_h-equality}
 h\circ f_{1}= h\circ(\varphi\circ\psi)=h_\varphi+c_{1}+c_{2}=h\circ(\psi\circ\varphi)=h\circ f_{2}.
\end{equation}
Since the domain~$V$ is isogonal at~$\tau$, applying Lemma~\ref{LM_isogonality} to $f_1$ and~$f_2$ with ${\sigma:=\tau}$, we see that there are two domains ${U_1,U_2\subset\UD}$ both isogonal at~$\tau$ and such that ${f_k(U_k)\subset V}$,  ${k=1,2}$.
It follows easily from the very definition that the intersection of any finite number of domains isogonal at the same point of~$\UC$ has non-empty interior. Set ${B:=U_1\cap U_2}$. Therefore, ${f_k(B)\subset V}$,  ${k=1,2}$, and by the univalence of $h$ in~$V$ and the identity principle for holomorphic functions, we obtain that ${f_{1}=f_{2}}$ on the unit disc. This proves the first assertion of the proposition.

Concerning the second assertion, it is enough to recall that, by Theorem~\ref{Thm:V-sets}, $h_\varphi$ is univalent in the domain ${V:=V_\varphi(1/3)}$, which is isogonal at $\tau$.
\end{proof}

We conclude this section by deducing some further conclusions assuming that the hypothesis of the second assertion in Proposition~\ref{PR_affine-part} holds with some~${c\in\Real}$.

\begin{proposition} \label{Prop:firstpropertiescoefficient}
Let $\varphi\in\Hol(\D)$ be parabolic with Koenigs function~$h_\varphi$ and Denjoy\,--\,Wolff point ${\tau\in\partial\UD}$.  Suppose ${\psi\in\Hol_\tau(\UD)}$ satisfies ${h_\varphi\circ\psi}={h_\varphi+c}$ for some constant~${c\in\R}$. The following statements hold.
\begin{ourlist}
		\item\label{firstpropereties1}  $c=0$ if and only if $\psi=\id_\D$.
		\item\label{firstpropereties2} If $c=\frac{m}{n}\in\Q$ with $m,n\in\N$, then $\psi^{\circ n}=\varphi^{\circ m}$.
		\item\label{firstpropereties3}  If $c<0$, then $\varphi$ as well as $\psi$ are parabolic automorphisms. Moreover, if $c=-\frac{m}{n}\in\Q$ with $m,n\in\N$, then $\psi^{\circ n}=\varphi^{\circ -m}$.	
	\end{ourlist}		

\end{proposition}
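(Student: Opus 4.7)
My plan exploits that, under the hypothesis $h_\varphi\circ\psi=h_\varphi+c$, the second assertion of Proposition~\ref{PR_affine-part} already gives $\psi\in\Zn(\varphi)$. Consequently, by Lemma~\ref{Le:V-sets}\+\ref{V-sets-fact1}, the set $V:=V_\varphi(1/3)$ is $\psi$-invariant, while $h_\varphi$ is univalent on $V$ by Theorem~\ref{Thm:V-sets}\+\ref{IT_V-sets(4)}. Assertions~\ref{firstpropereties1} and~\ref{firstpropereties2} then follow quickly. For~\ref{firstpropereties1}, the forward direction is immediate; conversely, $c=0$ means $h_\varphi\circ\psi=h_\varphi$ on $V$, and since $\psi(V)\subset V$ and $h_\varphi|_V$ is injective, this forces $\psi|_V=\id_V$ and hence $\psi=\id_\D$ by the identity principle. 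For~\ref{firstpropereties2}, iterating gives $h_\varphi\circ\psi^{\circ n}=h_\varphi+m=h_\varphi\circ\varphi^{\circ m}$; since $\psi^{\circ n},\varphi^{\circ m}\in\Zn(\varphi)$ both preserve~$V$, the same univalence-plus-identity-principle argument yields $\psi^{\circ n}=\varphi^{\circ m}$.

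For~\ref{firstpropereties3}, the rational subcase $c=-m/n$ follows directly from~\ref{firstpropereties1}: the identities $h_\varphi\circ(\varphi^{\circ m}\circ\psi^{\circ n})=h_\varphi$ and $h_\varphi\circ(\psi^{\circ n}\circ\varphi^{\circ m})=h_\varphi$ force, via~\ref{firstpropereties1}, both compositions to equal $\id_\D$. Hence $\psi^{\circ n}$ and $\varphi^{\circ m}$ are mutual inverses in $\Hol(\D)$, which simultaneously proves $\psi^{\circ n}=\varphi^{\circ -m}$ and shows that both are automorphisms of~$\D$. Since $\varphi^{\circ m}$ is bijective, so is $\varphi$ (for injectivity, $\varphi(z_1)=\varphi(z_2)$ yields $\varphi^{\circ m}(z_1)=\varphi^{\circ m}(z_2)$ and hence $z_1=z_2$; for surjectivity, every $w=\varphi^{\circ m}(z)=\varphi(\varphi^{\circ m-1}(z))$ lies in $\varphi(\D)$), so $\varphi$ is a parabolic automorphism, and similarly $\psi$.

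For the general case $c<0$, my plan is a density argument on $\Omega:=h_\varphi(\D)$. From $\Omega+1\subset\Omega$ and $\Omega+c\subset\Omega$ one obtains $\Omega+s\subset\Omega$ for every $s$ in the additive semigroup $S:=\{k+lc:k,l\in\N\cup\{0\}\}$, which is dense in~$\R$ when $c<0$ is irrational (given $t\in\R$, pick $l$ so the fractional part of $lc$ approximates that of $t$, then choose a non-negative integer $k$ matching the integer part). Openness of $\Omega$ then upgrades this to $\Omega+t\subset\Omega$ for every $t\in\R$, so $\Omega=\R+iE$ for some open interval $E\subset\R$. Combined with $\bigcup_n(\Omega-n)=S_I$ and $\Omega-n=\Omega$, this forces $\Omega$ to coincide with the base space~$S_I$ of the canonical model of~$\varphi$. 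If $\varphi$ were of zero hyperbolic step, then $S_I=\C$; reducing to $\mtilde\varphi,\mtilde\psi$ via Proposition~\ref{modelV_par}\+\ref{IT_modelV-zero-step}, where $h_{\mtilde\varphi}=h_\varphi\circ f$ is univalent on all of~$\D$, the same density argument forces $h_{\mtilde\varphi}(\D)=\C$, contradicting Liouville. Hence $\varphi$ has positive hyperbolic step, $\Omega$ equals the appropriate half-plane~$\H$, and relying on the global univalence of $h_\varphi$ on~$\D$ in the parabolic positive-step case, $h_\varphi:\D\to\H$ is a biholomorphism. Consequently $\varphi=h_\varphi^{-1}\circ(h_\varphi+1)$ and $\psi=h_\varphi^{-1}\circ(h_\varphi+c)$ are parabolic automorphisms. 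The main obstacle I anticipate is this last step, namely justifying the global univalence of the Koenigs function for parabolic self-maps of positive hyperbolic step (or, alternatively, promoting the automorphism property of $\mtilde\varphi$ back to $\varphi$), which is a standard but non-trivial classical fact.
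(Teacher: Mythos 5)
Your parts \ref{firstpropereties1}, \ref{firstpropereties2}, and the rational subcase of \ref{firstpropereties3} are correct and essentially follow the paper's route (your derivation of the rational subcase of \ref{firstpropereties3} from \ref{firstpropereties1}, showing that $\varphi^{\circ m}$ and $\psi^{\circ n}$ are mutually inverse, is a slightly cleaner packaging of the same identity-principle argument; the paper instead shows $h\circ\mtilde\psi^{\circ n}\circ\mtilde\varphi^{\circ m}=h$ and concludes directly). Your density argument for the irrational subcase is a genuine and self-contained alternative to the paper's appeal to an external result: once you know $h_*(\D)+1\subset h_*(\D)$ and $h_*(\D)+c\subset h_*(\D)$ with $h_*:=h_\varphi\circ f$ univalent, the density of $\{k+\ell c:k,\ell\in\N\cup\{0\}\}$ (plus the small openness argument you gloss over) forces $h_*(\D)$ to be $\R$-translation invariant, hence equal to the base space of the model for~$\mtilde\varphi$; the zero-step case gives a biholomorphism $\D\to\C$, contradicting Liouville, and the positive-step case makes $h_*$ a biholomorphism onto a half-plane, whence $\mtilde\varphi,\mtilde\psi\in\Aut$. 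The paper instead gets $\mtilde\varphi\in\Aut$ in one stroke by citing \cite[Theorem~3.1\+(B)]{CDG-Centralizer}.

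However, there is a real gap at the very end, exactly where you flag the ``main obstacle.'' Your primary route --- arguing that $h_\varphi:\D\to\H$ is globally univalent because $\varphi$ is parabolic of positive hyperbolic step --- relies on a claim that is \emph{false} in general: for any non-elliptic $\varphi$, univalence of $h_\varphi$ forces univalence of $\varphi$ (if $\varphi(z_1)=\varphi(z_2)$ then $h_\varphi(z_1)+1=h_\varphi(z_2)+1$), so when $\varphi$ is not univalent, $h_\varphi$ cannot be univalent on $\D$, regardless of the hyperbolic step. The proposition does not assume $\varphi$ univalent, so this cannot be invoked. The second route you mention --- promoting $\mtilde\varphi\in\Aut$ to $\varphi\in\Aut$ --- is the correct one, but it is not a ``standard classical fact''; it needs an actual argument, and the one the paper gives is specific to the set $V=V_\varphi(1/3)$: from $\mtilde\varphi\in\Aut$ one concludes $\varphi$ maps $V$ biholomorphically onto itself, then picks $z_1\in\UD\cap\partial V$ (so $\phd_\UD(z_1,\varphi(z_1))=1/3$), sets $z_2:=\varphi(z_1)\in\partial V$, observes $\phd_\UD(\varphi(z_1),\varphi(z_2))=\phd_\UD(z_2,\varphi(z_2))=1/3=\phd_\UD(z_1,z_2)$ with $z_1\neq z_2$, and invokes the equality case of the Schwarz--Pick Lemma. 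Without this (or some equivalent) step, the argument for $\varphi\in\Aut$ is incomplete; and once $\varphi\in\Aut$ is in hand, $h_\varphi$ does become univalent, so that only then is $\psi=h_\varphi^{-1}\circ(h_\varphi+c)$ well-defined and visibly an automorphism.
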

\begin{proof}
Let us denote $V:=V_\varphi(1/3)$. By Theorem~\ref{Thm:V-sets}, $h_{\varphi}$ and $\varphi$ are univalent in $V$ and ${\varphi(V)\subset V}$.
Since $\psi$ commutes with~$\varphi$ by Proposition~\ref{PR_affine-part}, using Lemma~\ref{Le:V-sets}\+\ref{V-sets-fact1}, we see that ${\psi(V)\subset V}$. Moreover, $\psi$ is univalent in~$V$. Indeed, if $z_{1}, z_{2}\in V$ are such that $\psi(z_{1})=\psi(z_{2})$, then
$$
h_{\varphi}(z_{1})+c=h_{\varphi}(\psi(z_{1}))=h_{\varphi}(\psi(z_{2}))=h_{\varphi}(z_{2})+c.
$$
So that $z_{1}=z_{2}$ by the univalence of $h_{\varphi}$ in $V$.

\StepP{\ref{firstpropereties1}} If $c=0$,  then ${h_\varphi\circ\psi}={h_\varphi}$ and  $\psi|_{V}=\id_V$ because  ${\psi(V)\subset V}$ and $h_{\varphi}$ is univalent in~$V$. By the identity principle, ${\psi=\id_\D}$. The converse implication is trivial.

\StepP{\ref{firstpropereties2}}  If $c=\frac{m}{n}$ with some $m,n\in \N$, then
	$$
	h_\varphi\circ \psi^{\circ n}=h_\varphi+m=h_\varphi\circ\varphi^{\circ m}.
	$$
	Bearing in mind that $\psi(V)\subset V$ and $\varphi(V)\subset V$, we have that $\psi^{\circ n}|_V=\varphi^{\circ m}|_V$ and, by the identity principle, $\psi^{\circ n}=\varphi^{\circ m}$.
	
\StepP{\ref{firstpropereties3}}
Take $f:\D\to V$, a Riemann map from $\D$ onto  $V$. Then $\mtilde \varphi:={f^{-1}\circ \varphi\circ f}$ and $\mtilde \psi:={f^{-1}\circ \psi\circ f}$ are univalent holomorphic self-maps of~$\UD$ and moreover,
\begin{equation}\label{EQ_new}
h_{\varphi}\circ f\circ \mtilde \varphi=h_{\varphi}\circ f+1\quad\text{and}\quad
h_{\varphi}\circ f\circ \mtilde \psi=h_{\varphi}\circ f+c.
\end{equation}
By Proposition~\ref{modelV_par}, ${h:=h_{\varphi}\circ f}$ coincides up to an additive constant with the Koenigs function of~$\mtilde \varphi$. Recall also that $h_\varphi$ is univalent in~$V$ by Theorem~\ref{Thm:V-sets}\+\ref{IT_V-sets(4)}. As a consequence, $h$ is univalent in~$\UD$. Furthermore, from the second equality in~\eqref{EQ_new} it follows that
\begin{equation}\label{EQ_Omega_plus_c}
{h(\UD)+c}={h\circ\mtilde\psi(\UD)}\subset h(\UD).
\end{equation}

If $c=-m/n$ with ${m,n\in\Natural}$, then we have
 $$
   h\circ\mtilde\psi^{\circ n}\circ\mtilde\varphi^{\circ m}=h\circ\mtilde\varphi^{\circ m}+nc
     =h+m+nc=h.
 $$
Hence, $f^{-1}\circ\psi^{\circ n}\circ\varphi^{\circ m}\circ f=\mtilde\psi^{\circ n}\circ\mtilde\varphi^{\circ m}=\id_\UD$. With the help of the identity principle we may therefore conclude that ${\psi^{\circ n}\circ\varphi^{\circ m}}=\id_\UD$. Thus, $\varphi,\psi\in\Aut$ and $\psi^{\circ n}=\varphi^{\circ-m}$, as desired.

It remains to see that $\varphi,\psi\in\Aut$ also in case ${c\in(-\infty,0)\setminus\mathbb Q}$.
Recalling~\eqref{EQ_Omega_plus_c} and using \cite[Theorem~3.1\+(B)]{CDG-Centralizer}, we see that ${\mtilde\varphi\in\Aut}$.
Hence, $\varphi$ maps~$V$ conformally onto itself. We wish to show that ${\varphi\in\Aut}$. This is immediate if~${V=\UD}$, so we suppose that ${V\neq\UD}$. Fix some ${z_1\in\UD\cap\partial V}$. Then ${z_2:=\varphi(z_1)}$ also lies on~$\partial V$. Taking into account the definition of~$V$, we have ${\phd_{\UD}(z,\varphi(z))=1/3}$ for any ${z\in\UD\cap\partial V}$. In particular,
$$
  \phd_{\UD}\big(z_1,z_2\big)=\phd_{\UD}\big(z_1,\varphi(z_1)\big)=1/3\quad \text{and}\quad \phd_{\UD}\big(\varphi(z_1),\varphi(z_2)\big)=\phd_{\UD}\big(z_2,\varphi(z_2)\big)=1/3.
$$
Therefore, $\varphi\in\Aut$ by the Schwarz\,--\,Pick Lemma. It follows easily that $\psi$ is also an automorphism because $\psi={h_\varphi^{-1}\circ(h_\varphi+c)}$, where the Koenigs function $h_\varphi$ of~$\varphi$ is a conformal mapping of~$\UD$ onto a half-plane, with $\partial h_\varphi(\UD)$ being parallel to~$\Real$.
\end{proof}

\section{Characterization of commutativity via simultaneous linearization}\label{S_zero}
In his seminal paper~\cite{Cowen-comm}, Cowen proved that if two self-maps ${\varphi,\psi\in\Hol(\UD)}$ commute, then they belong to the pseudo-iteration semigroup of~${\varphi\circ\psi}$. Moreover, if additionally $\varphi$ is elliptic or hyperbolic then as he showed, $\psi$ belongs also to the pseudo-iteration semigroup of~$\varphi$. Conversely, if $\psi$ belongs to the pseudo-iteration semigroup of an elliptic or hyperbolic self-map ${\varphi\in\Hol(\UD)}$ and if it has the same Denjoy\,--\,Wolff point as~$\varphi$, then $\psi$ and~$\varphi$ commute, see e.g. Gentili and Vlacci \cite[Theorem~2.7]{Gentili-Vlacci} and Bisi and Gentili \cite[Theorem~6]{BisGen01}; see also Vlacci~\cite{Vlacci}.
Taking into account the relationship of the pseudo-iteration semigroup of a non-elliptic self-map to simultaneous linearization, discussed in the previous section, the following theorem can be seen as an analogue of these results for parabolic self-maps of zero hyperbolic step.

\begin{theorem}\label{Prop:pseuso-semigroup}
Let $\varphi\in \Hol(\D)$ be parabolic of zero hyperbolic step and let $\psi\in \Hol(\D)$ be different from $\id_\D$. Likewise, let $h_{\varphi}$ be the Koenigs function of $\varphi$. Then, the following are equivalent:
\begin{romlist}
\item\label{IT-EQUIV_commute} $\psi\in \Zn(\varphi)$;
\item\label{IT-EQUIV_SimLin} $\varphi$ and $\psi $ have the same Denjoy\,--\,Wolff point and
$\,h_\varphi\circ \psi=h_\varphi+c\,$ for some constant $c\in\C$.
\end{romlist}
\end{theorem}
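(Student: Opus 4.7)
The plan is to derive (ii)$\Rightarrow$(i) as a direct application of Proposition~\ref{PR_affine-part}: the hypothesis $h_\varphi\circ\psi=h_\varphi+c$ is precisely the ``in particular'' assertion there, with $h_\varphi$ univalent on the domain $V_\varphi(1/3)$, which is isogonal at $\tau$ by Theorem~\ref{Thm:V-sets}\+\ref{IT_V-sets(3)} and~\ref{IT_V-sets(4)}. Hence $\psi\in\Zn(\varphi)$.

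For the substantive direction (i)$\Rightarrow$(ii), assume $\psi\in\Zn(\varphi)$ with $\psi\neq\id_\D$. First I would observe that $\psi$ must be parabolic with the same Denjoy--Wolff point $\tau$ as $\varphi$: a non-identity self-map commuting with a non-elliptic, non-hyperbolic-automorphism map cannot be elliptic, must share the Denjoy--Wolff point (Behan), and preserves the parabolic type (Cowen), as recalled in Subsection~\ref{SS_commuting}. Setting $g:=h_\varphi\circ\psi$, commutativity yields $g\circ\varphi=h_\varphi\circ\varphi\circ\psi=g+1$, so $g$ is another holomorphic solution of the Abel equation associated with~$\varphi$. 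I would then invoke the uniqueness Theorem~\ref{Thm:uniqueness} with $h_1=g$ to conclude $g=h_\varphi+c$; this reduces the task to exhibiting a base point $z_0$ with $h_\varphi'(z_0)\neq0$ and a radius $r>0$ such that $g$ is univalent on every hyperbolic disc around $\varphi^{\circ n}(z_0)$ of radius $r$, for all sufficiently large~$n$.

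I would pick $z_0\in V_\varphi(1/9)\cap V_\psi(1/9)$, which is non-empty because both sets are isogonal at $\tau$ by Theorem~\ref{Thm:V-sets}\+\ref{IT_V-sets(3)}, and on which $h_\varphi'(z_0)\neq0$ by the univalence of $h_\varphi$ on $V_\varphi(1/3)$. For each $n\in\N$, the pseudohyperbolic disc $D_n:=\phdisk(\varphi^{\circ n}(z_0),1/9)$ lies inside $V_\varphi(1/3)\cap V_\psi(1/3)$: the first inclusion comes from Lemma~\ref{Le:V-sets}\+\ref{V-sets-fact4a} applied with $\psi$ there replaced by~$\varphi$, and the second from Lemma~\ref{Le:V-sets}\+\ref{V-sets-fact4}, which is available precisely because $\psi\in\Zn(\varphi)$ and $z_0\in V_\psi(1/9)$. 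Univalence of $g=h_\varphi\circ\psi$ on $D_n$ then chains together three ingredients: $\psi$ is univalent on $V_\psi(1/3)\supset D_n$ by Theorem~\ref{Thm:V-sets}\+\ref{IT_V-sets(4)}; the set $V_\varphi(1/3)$ is $\psi$-invariant by Lemma~\ref{Le:V-sets}\+\ref{V-sets-fact1}, so $\psi(D_n)\subset V_\varphi(1/3)$; and $h_\varphi$ is univalent on $V_\varphi(1/3)$. Theorem~\ref{Thm:uniqueness} then yields $g=h_\varphi+c$.

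The main difficulty I anticipate is exactly this coordination between the two families $V_\varphi(\cdot)$ and $V_\psi(\cdot)$: Theorem~\ref{Thm:V-sets}\+\ref{IT_V-sets(4)} provides univalence of $h_\varphi$ only on the former and univalence of $\psi$ only on the latter, yet Theorem~\ref{Thm:uniqueness} requires both simultaneously along the entire forward $\varphi$-orbit of a single base point. Starting from $z_0$ in the intersection and exploiting Lemma~\ref{Le:V-sets}\+\ref{V-sets-fact4}---which uses the commutation hypothesis in an essential way---is what reconciles the two demands.
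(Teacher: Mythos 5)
Your proof is correct, and for the direction (i)$\Rightarrow$(ii) it takes a genuinely different and in fact shorter path than the paper. The paper also reduces to Theorem~\ref{Thm:uniqueness}, but it does so through the chain Theorem~\ref{Thm:zero-hyperbolic-step}\+\ref{IT_THzero-univalent}$\Rightarrow$\ref{IT_THzero-univalent2}$\Rightarrow$\ref{IT_THzero-SimLin}: one first proves (using $V_\psi(1/9)\subset V_\psi(1/3)$ and Theorem~\ref{Thm:uniqueness}) that $\psi$ is univalent when $\varphi$ is, then upgrades this to univalence of $\psi|_{V_\varphi(1/3)}$ by conjugating with a Riemann map $f:\UD\to V_\varphi(1/3)$ and invoking Proposition~\ref{modelV_par} (that $\mtilde\varphi:=f^{-1}\circ\varphi\circ f$ is again parabolic of zero hyperbolic step), and only then returns to $V_\varphi(1/9)$, $V_\varphi(1/3)$ and applies Theorem~\ref{Thm:uniqueness} a second time. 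You instead coordinate the two families $V_\varphi(\cdot)$ and $V_\psi(\cdot)$ at once by choosing $z_0$ in the intersection $V_\varphi(1/9)\cap V_\psi(1/9)$ (nonempty since both are isogonal at $\tau$, after noting via Behan and Cowen that $\psi$ is parabolic with the same Denjoy--Wolff point) and using Lemma~\ref{Le:V-sets}\+\ref{V-sets-fact4a} and~\ref{V-sets-fact4} to land the discs $\phdisk(\varphi^{\circ n}(z_0),1/9)$ simultaneously inside $V_\varphi(1/3)$ and $V_\psi(1/3)$, so that univalence of $h_\varphi\circ\psi$ on each disc follows directly from Theorem~\ref{Thm:V-sets}\+\ref{IT_V-sets(4)} for $\psi$, the $\psi$-invariance of $V_\varphi(1/3)$ (Lemma~\ref{Le:V-sets}\+\ref{V-sets-fact1}), and Theorem~\ref{Thm:V-sets}\+\ref{IT_V-sets(4)} for $\varphi$. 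This avoids Proposition~\ref{modelV_par} and the pass to $\mtilde\varphi$ entirely, and applies Theorem~\ref{Thm:uniqueness} only once. The price is that you do not, along the way, establish the paper's assertions~\ref{IT_THzero-univalent} and~\ref{IT_THzero-univalent2} (that $\psi$ inherits univalence from $\varphi$, globally or on absorbing sets), which the paper needs anyway for Theorem~\ref{Thm:zero-hyperbolic-step} and its later applications; but for the isolated statement of Theorem~\ref{Prop:pseuso-semigroup} your route is the more economical one.
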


\noindent Implication \ref{IT-EQUIV_SimLin}~$\Rightarrow$~\ref{IT-EQUIV_commute} in this theorem has been already proved in the previous section: it is exactly the second statement in Proposition~\ref{PR_affine-part}. The other implication \ref{IT-EQUIV_commute}~$\Rightarrow$~\ref{IT-EQUIV_SimLin} follows directly from Behan's Theorem and assertion~\ref{IT_THzero-SimLin} of the more technical Theorem~\ref{Thm:zero-hyperbolic-step}, which we will prove below.

\begin{remark}\label{RM_under-additional-condition}
For parabolic self-maps, \cite[Theorem~6]{BisGen01} implies the equivalence \ref{IT-EQUIV_commute}~$\Leftrightarrow$~\ref{IT-EQUIV_SimLin} under the assumption that at least one (and hence every) orbit w.r.t.~$\varphi$ converges to the Denjoy\,--\,Wolff point non-tangentially. In fact, in \cite[Remark on page~40]{Gentili-Vlacci}
it is asked whether this additional hypothesis can be removed.
Theorem~\ref{Prop:pseuso-semigroup} answers positively this question for parabolic self-maps of zero hyperbolic step\footnote{It is worth recalling
that every parabolic self-map having an orbit that converges to the Denjoy\,--\,Wolff
point non-tangentially is necessarily of zero hyperbolic step \cite[page~440]{Pom79}. At the same time (see e.g. \cite[\S3.2]{ParaZoo})
there exist parabolic self-maps of~$\UD$ all of whose orbits converge to the Denjoy\,--\,Wolff point in the
tangential way. For further details, see \cite[\S4.6]{Abate2}.}.  At the same time, as we will see in \cite{CDG-positive-h.step}, and as it has been shown for univalent self-maps in~\cite[Section~7]{CDG-Centralizer},
 the answer in the general case is negative, because the implication \ref{IT-EQUIV_commute}~$\Rightarrow$~\ref{IT-EQUIV_SimLin} does not hold for parabolic self-maps of positive hyperbolic step.
\end{remark}

\begin{definition} \label{simul-linea-coefficient-zero}
Let $\varphi \in \Hol(\D)$ be a parabolic self-map of zero hyperbolic step  and let $\psi\in\Zn(\varphi)$. We will denote by~$c_{\varphi,\psi}$ the constant ${c\in\C}$ defined in a unique way by the relation ${h_\varphi\circ\psi=h_\varphi+c}$ in Theorem~\ref{Prop:pseuso-semigroup}\+\ref{IT-EQUIV_SimLin}. This constant will be called the {\sl simultaneous linearization coefficient} of~$\psi$ w.r.t.~$\varphi$.
\end{definition}

Note that according to Proposition~\ref{Prop:firstpropertiescoefficient}, $c_{\varphi,\psi}=0$ if and only if $\psi=\id_{\D}$.
At the end of this section we will establish basic properties of the map ${\Zn(\varphi)\ni\psi\mapsto c_{\varphi,\psi}\in\C}$, see Theorem~\ref{Thm:homeomorphism}. As a corollary, we will prove the following remarkable result. (For the definition of a one-parameter semigroup, see Appendix, page~\pageref{PAGE_one-paremeter-semigroup}.)

\begin{theorem}\label{Thm:embedding}
Let $\varphi \in \Hol(\D)$ be parabolic of zero hyperbolic step. If $\id_{\D}$ is not isolated in $\Zn(\varphi)$, then $\varphi$ is univalent and $  \Zn(\varphi)$ contains a non-trivial continuous one-parameter semigroup~$(\phi_t)$. If in addition, $\varphi$ has a boundary fixed point other than its Denjoy\,--\,Wolff point, then $\varphi=\phi_{t_0}$ for some~${t_0>0}$ (and hence, by the very definition,  $\varphi$ is embeddable).
\end{theorem}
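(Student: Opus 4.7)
The plan is to transport the hypothesis via the semigroup homeomorphism ${\psi\mapsto c_{\varphi,\psi}}$ of Theorem~\ref{Thm:homeomorphism} to the image subsemigroup $C:=\{c_{\varphi,\psi}:\psi\in\Zn(\varphi)\}\subset(\Complex,+)$, and then to build the desired one-parameter semigroup by a compactness argument on the coefficient side. Univalence of~$\varphi$ is immediate from Theorem~\ref{Thm:zero-hyperbolic-step}, since the non-isolation hypothesis rules out the non-univalent case in which $\id_\D$ is isolated in~$\Zn(\varphi)$.

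Pick a sequence $(\psi_n)\subset\Zn(\varphi)\setminus\{\id_\D\}$ with $\psi_n\to\id_\D$. The homeomorphism yields $c_n:=c_{\varphi,\psi_n}\to 0$, with ${c_n\ne 0}$ by Proposition~\ref{Prop:firstpropertiescoefficient}\+\ref{firstpropereties1}. Passing to a subsequence, assume ${c_n/|c_n|\to c_0}$ with ${|c_0|=1}$. Fix ${t>0}$ and set ${k_n:=\lfloor t/|c_n|\rfloor}$, so that ${k_nc_n\to tc_0}$, while $\psi_n^{\circ k_n}\in\Zn(\varphi)$ satisfies
\[
 h_\varphi\circ\psi_n^{\circ k_n}=h_\varphi+k_nc_n.
\]
By Montel's theorem, along a further subsequence, $\psi_n^{\circ k_n}\to\phi_t$ locally uniformly on~$\D$, for some ${\phi_t\in\Hol(\D,\overline\D)}$. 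Passing to the limit in the displayed identity yields ${h_\varphi\circ\phi_t=h_\varphi+tc_0}$ as an equality of holomorphic functions $\D\to\Complex$; since the right-hand side is non-constant, $\phi_t$ cannot be a boundary constant, so ${\phi_t\in\Hol(\D)}$, and since ${tc_0\ne 0}$, we have ${\phi_t\ne\id_\D}$ by Proposition~\ref{Prop:firstpropertiescoefficient}\+\ref{firstpropereties1}.

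Next, one verifies that the Denjoy\,--\,Wolff point of~$\phi_t$ is the Denjoy\,--\,Wolff point~$\tau$ of~$\varphi$: iterating the linearization identity, ${h_\varphi(\phi_t^{\circ m}(z))=h_\varphi(z)+mtc_0\to\infty}$ in the Koenigs domain, and combining the asymptotic description of Koenigs domains from Remark~\ref{Re:almostasemigroup} with the fact that preimages under $h_\varphi^{-1}$ of sequences tending to~$\infty$ in the Koenigs domain tend to~$\tau$, we deduce ${\phi_t^{\circ m}(z)\to\tau}$. Then Proposition~\ref{PR_affine-part} applies and gives ${\phi_t\in\Zn(\varphi)}$, so ${tc_0\in C}$. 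Additivity of~$c_{\varphi,\cdot}$ with respect to composition together with injectivity of the homeomorphism force ${\phi_s\circ\phi_t=\phi_{s+t}}$, while continuity of the inverse of the homeomorphism of Theorem~\ref{Thm:homeomorphism} transports the continuity of ${t\mapsto tc_0}$ into continuity of ${t\mapsto\phi_t}$. Hence $(\phi_t)_{t\ge 0}\subset\Zn(\varphi)$ is a non-trivial continuous one-parameter semigroup.

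For the last assertion, with the additional hypothesis that $\varphi$ has a boundary fixed point distinct from~$\tau$, one applies the embeddability criterion \cite[Theorem~1.4]{CDG-Embedding} to the univalent parabolic self-map~$\varphi$ and the non-trivial continuous one-parameter semigroup~$(\phi_t)$ that commutes with it, to conclude ${\varphi=\phi_{t_0}}$ for some ${t_0>0}$. The main technical obstacle is the compactness step: one must rule out that the normal-family limit $\phi_t$ be a boundary constant and then verify that its Denjoy\,--\,Wolff point is exactly~$\tau$. Both difficulties are, however, forced by the limiting linearization identity $h_\varphi\circ\phi_t=h_\varphi+tc_0$ and the asymptotic geometry of the Koenigs domain of a parabolic self-map of zero hyperbolic step.
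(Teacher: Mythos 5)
Your construction of the one\,--\,parameter semigroup via the additive subsemigroup $\mathcal A_\varphi=\{c_{\varphi,\psi}\}$ follows essentially the same route as the paper: extract a subsequence with $c_n/|c_n|\to c_0$, set $k_n=\lfloor t/|c_n|\rfloor$, pass $k_nc_n\to tc_0$, and use the homeomorphism/isomorphism structure of Theorem~\ref{Thm:homeomorphism} to turn the one-parameter subsemigroup of $\Complex$ into a continuous one-parameter semigroup in $\Zn(\varphi)$. The paper does this more economically: once $k_nc_n\to tc_0$, the \emph{closedness} of $\mathcal A_\varphi$ (Theorem~\ref{Thm:homeomorphism}) immediately gives $tc_0\in\mathcal A_\varphi$, and the homeomorphism supplies $\phi_t$ with $c_{\varphi,\phi_t}=tc_0$ and transports both the semigroup law and continuity at once. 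Your Montel extraction of $\phi_t$, the exclusion of a boundary constant, the verification of the Denjoy\,--\,Wolff point, and the detour through Proposition~\ref{PR_affine-part} are therefore not wrong, but are a more laborious re-derivation of facts already packaged in Theorem~\ref{Thm:homeomorphism}. (Also, to invoke Proposition~\ref{PR_affine-part} you would first need to know $\phi_t$ is parabolic, a point you do not address; it is simpler to just pass to the limit in $\psi_n^{\circ k_n}\circ\varphi=\varphi\circ\psi_n^{\circ k_n}$ once you know $\phi_t\in\Hol(\D)$.)

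The genuine gap is in your first paragraph, where you claim that univalence of $\varphi$ is ``immediate from Theorem~\ref{Thm:zero-hyperbolic-step}, since the non-isolation hypothesis rules out the non-univalent case in which $\id_\D$ is isolated in $\Zn(\varphi)$.'' Theorem~\ref{Thm:zero-hyperbolic-step} says nothing of the sort: it asserts that \emph{if} $\varphi$ is univalent then every $\psi\in\Zn(\varphi)$ is univalent, and it gives simultaneous linearization and commutativity of the centralizer; it does \emph{not} assert that non-univalence of $\varphi$ forces $\id_\D$ to be isolated in $\Zn(\varphi)$. In fact, the statement ``if $\varphi$ is not univalent, then $\id_\D$ is isolated in $\Zn(\varphi)$'' is precisely the contrapositive of the conclusion you are trying to prove, so invoking it here is circular. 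The paper's logic runs in the opposite order: it \emph{first} constructs the non-trivial continuous one-parameter semigroup $(\phi_t)\subset\Zn(\varphi)$ as above, and \emph{then} deduces the univalence of $\varphi$ from the external result \cite[Proposition~3.3]{CDG-Embedding} (which uses the existence of such a semigroup commuting with $\varphi$). Your argument needs to be reorganised accordingly: build $(\phi_t)$ first, then obtain univalence from \cite[Proposition~3.3]{CDG-Embedding}, and finally apply \cite[Theorem~1.4]{CDG-Embedding} for the last assertion (which you do cite correctly).
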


The next theorem gathers the rest of the results we are going to establish in this section.

\begin{theorem}\label{Thm:zero-hyperbolic-step}
Let $\varphi:\D\to \D$ be parabolic of zero hyperbolic step and $\psi \in \Zn(\varphi)$. Let $h_\varphi$ be the Koenigs map of $\varphi$.
Then the following four statements hold.
\begin{ourlist}
\item\label{IT_THzero-univalent}
          If $\varphi $ is univalent, then so is $\psi$.
\item\label{IT_THzero-univalent2}
          Let $V\subset\UD$ be a simply connected  and $\varphi$-absorbing domain for $\varphi$.  If ${\psi(V)\subset V}$ and $\varphi$ is univalent in~$V$, then $\psi$ is univalent in~$V$ as well.
\item\label{IT_THzero-SimLin}
          There exits $c\in \C$ such that $h_\varphi\circ \psi=h_\varphi+c$.
\item\label{IT_THzero-abelian}
          $\Zn(\varphi)$ is abelian, i.e., if $\psi_{j}\in \Zn(\varphi)$, $j=1,2$, then $\psi_{1}\circ\psi_{2}=\psi_{2}\circ \psi_{1}$.
\end{ourlist}
\end{theorem}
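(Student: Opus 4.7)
The plan is to prove the four assertions in the order \ref{IT_THzero-SimLin}, then \ref{IT_THzero-univalent2}, then \ref{IT_THzero-univalent} (as a special case), and finally \ref{IT_THzero-abelian}. Both \ref{IT_THzero-univalent} and \ref{IT_THzero-abelian} will fall out quickly once simultaneous linearization \ref{IT_THzero-SimLin} is established, since $h_\varphi$ is univalent on the absorbing set $V:=V_\varphi(1/3)$ and the equation $h_\varphi\circ\psi=h_\varphi+c$ propagates injectivity through $h_\varphi$. The main technical work therefore sits entirely in \ref{IT_THzero-SimLin}, where the key device is the uniqueness result for Koenigs functions in Theorem~\ref{Thm:uniqueness}.

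For \ref{IT_THzero-SimLin}, I would set $g:=h_\varphi\circ\psi$. Commutativity of $\varphi$ and $\psi$ together with Abel's equation for $h_\varphi$ gives $g\circ\varphi=h_\varphi\circ(\varphi\circ\psi)=(h_\varphi+1)\circ\psi=g+1$, so $g$ satisfies the same Abel equation as $h_\varphi$. To apply Theorem~\ref{Thm:uniqueness}, I need to verify that $g$ is univalent on a hyperbolic disc of a fixed (positive) radius around $\varphi^{\circ n}(z_0)$ for some base point $z_0$ and all sufficiently large $n$. Fix any $z_0\in V_\varphi(1/9)$; such a $z_0$ exists (and has $h_\varphi'(z_0)\neq0$) because $h_\varphi$ is univalent on $V_\varphi(1/3)$ by Theorem~\ref{Thm:V-sets}\+\ref{IT_V-sets(4)}. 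For every $n\in\Natural$, by $\varphi$-invariance of $V_\varphi(1/9)$ the point $\varphi^{\circ n}(z_0)$ stays in $V_\varphi(1/9)$. Applying Lemma~\ref{Le:V-sets}\+\ref{V-sets-fact4a} to $\varphi$ and Lemma~\ref{Le:V-sets}\+\ref{V-sets-fact4} to $\psi$, the pseudohyperbolic disc $\phdisk(\varphi^{\circ n}(z_0),1/9)$ is contained simultaneously in $V_\varphi(1/3)$ and in $V_\psi(1/3)$. The second inclusion, combined with Theorem~\ref{Thm:V-sets}\+\ref{IT_V-sets(4)} applied to $\psi$, shows that $\psi$ is univalent on this disc; the Schwarz\,--\,Pick lemma sends its image into $\phdisk\big(\varphi^{\circ n}(\psi(z_0)),1/9\big)$, which by the same argument (since $\psi(z_0)\in V_\varphi(1/9)$ by Lemma~\ref{Le:V-sets}\+\ref{V-sets-fact1}) is contained in $V_\varphi(1/3)$, where $h_\varphi$ is univalent. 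Hence $g=h_\varphi\circ\psi$ is the composition of two univalent maps on $\phdisk(\varphi^{\circ n}(z_0),1/9)$ and is therefore univalent there. Translating pseudohyperbolic to hyperbolic radius, Theorem~\ref{Thm:uniqueness} yields $g=h_\varphi+c$ for some $c\in\C$.

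Now I would prove \ref{IT_THzero-univalent2}, from which \ref{IT_THzero-univalent} follows by taking $V=\UD$. Let $f:\UD\to V$ be a Riemann map. Since $V$ is $\varphi$-invariant and $\varphi$-absorbing, Proposition~\ref{modelV_par}\+\ref{IT_modelV-zero-step} says that $\mtilde\varphi:=f^{-1}\circ\varphi\circ f$ is parabolic of zero hyperbolic step with Koenigs function $\mtilde h:=h_\varphi\circ f$ (up to additive constant), and $\mtilde\varphi$ is univalent on $\UD$ by hypothesis. The assumption $\psi(V)\subset V$ lets me define $\mtilde\psi:=f^{-1}\circ\psi\circ f\in\Zn(\mtilde\varphi)$. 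Since $\mtilde\varphi$ is globally univalent, $\mtilde h$ itself is globally univalent on $\UD$: if $\mtilde h(z_1)=\mtilde h(z_2)$, iterating the Abel equation gives $\mtilde h(\mtilde\varphi^{\circ n}(z_1))=\mtilde h(\mtilde\varphi^{\circ n}(z_2))$, and the $\mtilde\varphi$-absorbing property together with univalence of $\mtilde h$ on $V_{\mtilde\varphi}(1/3)$ forces $\mtilde\varphi^{\circ n}(z_1)=\mtilde\varphi^{\circ n}(z_2)$ for large $n$, after which global univalence of $\mtilde\varphi$ gives $z_1=z_2$. Applying \ref{IT_THzero-SimLin} to $\mtilde\varphi,\mtilde\psi$ produces $\mtilde h\circ\mtilde\psi=\mtilde h+\mtilde c$, and since $\mtilde h$ is univalent on $\UD$, $\mtilde\psi$ is injective; unwinding the conjugation yields injectivity of $\psi$ on $V$.

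Finally, for \ref{IT_THzero-abelian}, given $\psi_1,\psi_2\in\Zn(\varphi)$ with simultaneous linearization coefficients $c_1,c_2$ from \ref{IT_THzero-SimLin}, a direct computation gives $h_\varphi\circ(\psi_1\circ\psi_2)=h_\varphi+c_1+c_2=h_\varphi\circ(\psi_2\circ\psi_1)$. Both $\psi_1\circ\psi_2$ and $\psi_2\circ\psi_1$ map $V_\varphi(1/3)$ into itself by Lemma~\ref{Le:V-sets}\+\ref{V-sets-fact1}, and $h_\varphi$ is univalent there, so the two compositions agree on $V_\varphi(1/3)$; by the identity principle they coincide on $\UD$. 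The main obstacle I anticipate is purely in \ref{IT_THzero-SimLin}, namely ensuring simultaneously that pseudohyperbolic discs around the $\varphi$-orbit lie in \emph{both} $V_\varphi(1/3)$ (to apply univalence of $h_\varphi$ to the image) and $V_\psi(1/3)$ (to apply univalence of $\psi$ to the domain); the machinery in Lemma~\ref{Le:V-sets} is tailored precisely to this double containment, so once the orbit is pushed into $V_\varphi(1/9)$ the argument goes through cleanly.
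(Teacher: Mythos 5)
Your proposal is essentially correct but reverses the paper's logical chain: the paper proves \ref{IT_THzero-univalent} first (directly, using the $V_\psi$-machinery and the \emph{global} univalence of $h_\varphi$ in the univalent case), then deduces \ref{IT_THzero-univalent2} by conjugation with a Riemann map, and only then proves \ref{IT_THzero-SimLin} by applying \ref{IT_THzero-univalent2} with $V=V_\varphi(1/3)$ to get univalence of $\psi$ on that set. You instead prove \ref{IT_THzero-SimLin} directly, then derive \ref{IT_THzero-univalent2} and \ref{IT_THzero-univalent} from it. Your direct argument is sound: using $V_\psi(1/3)$ to get local univalence of $\psi$ on the orbit discs, and $V_\varphi(1/3)$ to get local univalence of $h_\varphi$ on their $\psi$-images (via Schwarz--Pick and commutativity), is exactly the double-containment that Lemma~\ref{Le:V-sets} is designed to supply. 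This route avoids the chained dependence of \ref{IT_THzero-SimLin} on \ref{IT_THzero-univalent2} on \ref{IT_THzero-univalent}, at the cost of tracking $V_\varphi(\cdot)$ and $V_\psi(\cdot)$ simultaneously in a single step, whereas the paper keeps each step tied to one family of $V$-sets.

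There is one slip in your proof of \ref{IT_THzero-SimLin}. You fix $z_0\in V_\varphi(1/9)$ and then invoke Lemma~\ref{Le:V-sets}\+\ref{V-sets-fact4} to conclude $\phdisk(\varphi^{\circ n}(z_0),1/9)\subset V_\psi(1/3)$, but that item requires $z_0\in V_\psi(1/9)$, not $z_0\in V_\varphi(1/9)$. Since (for $\psi\neq\id_\UD$) $\psi$ is parabolic with the same Denjoy--Wolff point $\tau$ as $\varphi$ by Behan and Cowen, and both $V_\varphi(1/9)$ and $V_\psi(1/9)$ are isogonal at $\tau$ by Theorem~\ref{Thm:V-sets}\+\ref{IT_V-sets(3)}, the intersection $V_\varphi(1/9)\cap V_\psi(1/9)$ is nonempty, so the fix is just to pick $z_0$ in the intersection. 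With that correction the argument goes through, and your derivations of \ref{IT_THzero-univalent2}, \ref{IT_THzero-univalent}, and \ref{IT_THzero-abelian} are correct. Two small remarks: you should dispose of the trivial case $\psi=\id_\UD$ before using parabolicity of $\psi$; and your invocation of Proposition~\ref{modelV_par} for a general simply connected $\varphi$-absorbing $V$ (rather than $V_\varphi(r)$) requires the same mild extension of that proposition that the paper itself implicitly uses in its own proof of \ref{IT_THzero-univalent2} --- the proof carries over once one checks that $\varphi$ univalent in $V$ together with $V$ being $\varphi$-absorbing forces $h_\varphi$ to be univalent in $V$.
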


\begin{remark}
Assertions~\ref{IT_THzero-univalent} and~\ref{IT_THzero-abelian} of the above theorem extend Cowen's results \cite[Corollaries~4.2 and~4.9]{Cowen-comm} to parabolic self-maps of zero hyperbolic step. As we will see in~\cite{CDG-positive-h.step}, these statements fail in the case of positive hyperbolic step.
\end{remark}

Before passing to the proof of the results stated above, it
is also worth pointing out that for \textit{univalent} parabolic  self-maps~$\varphi$ of zero hyperbolic step, by \cite[Proposition~4.3]{CDG-Centralizer} we have
\begin{fleqn}[\parindent]
\begin{equation*}
\qquad
 \Zn(\varphi)=\{\psi \in \Hol(\D):\, \textrm{ there exists } c\in \C  \textrm{ such that } h_\varphi\circ \psi=h_\varphi+c\}.
\end{equation*}
At the same time by Theorem~\ref{Prop:pseuso-semigroup},
\begin{multline*}
\qquad
 \Zn(\varphi)=\{\psi \in \Hol_\tau(\D):\, \textrm{ there exists } c\in \C  \textrm{ such that } h_\varphi\circ \psi=h_\varphi+c\}\\
  ~\subset~
\{\psi \in \Hol(\D):\, \textrm{ there exists } c\in \C  \textrm{ such that } h_\varphi\circ \psi=h_\varphi+c\},
\end{multline*}
\end{fleqn}
where $\tau$ is the Denjoy\,--\,Wolff point of~$\varphi$ and we recall that $\Hol_\tau(\UD)$ stands the family of self-maps consisting of~$\id_\UD$ and all~${\psi\in\Hol(\UD)\setminus\{\id_\UD\}}$ whose Denjoy\,--\,Wolff point coincides with~$\tau$.
As the example below shows, if~$\varphi$ is not univalent, then the inclusion is in general strict.

\begin{example}\label{EX_DW-importante}
Take $\varphi$ a non-elliptic self-map of~$\UD$ such that $\varphi(-z)=\varphi(z)$ for all $z\in \D$ and let $h_{\varphi}$ be its Koenigs function. (One example of such a self-map is $\varphi(z):={(z^2+a)/(a+1)}$ with ${a\ge1}$.) Then
$$
h_{\varphi}(-z)=h_{\varphi} (\varphi(-z))-1=h_{\varphi} (\varphi(z))-1=h_{\varphi}(z), \quad z\in \D.
$$ The function $\psi:=-\varphi$ does not commute with  $\varphi$ but the above property shows that $h_{\varphi}\circ \psi=h_{\varphi}+1$.
\end{example}

\begin{proof}[\proofof{Theorem~\ref{Thm:zero-hyperbolic-step}}]
If $\psi=\id_\UD$, then statements~\ref{IT_THzero-univalent}\,--\,\ref{IT_THzero-SimLin} hold trivially. So we may suppose that ${\psi\neq\id_\UD}$. Then by Cowen's result, see Section~\ref{SS_commuting}, $\psi$ is parabolic. This allows us to take advantage of Lemma~\ref{Le:V-sets} and use Theorem~\ref{Thm:V-sets} both for $\varphi$ and~$\psi$.

Now write $h_{1}:=h_{\varphi}\circ \psi$. Then ${h_1\circ\varphi}={h_{\varphi}\circ\psi\circ\varphi} ={h_{\varphi}\circ\varphi\circ\psi}={(h_{\varphi}+1)\circ\psi}={h_1+1}$, i.e. $h_1$ satisfies Abel's equation for~$\varphi$.

\StepP{\ref{IT_THzero-univalent}} The function $h_\varphi$ is univalent because $\varphi$ is univalent (see \cite[Lemma~3.5.4]{Abate2}).
Take ${z_{0}\in V_{\psi}(1/9)}$ and write $z_{n}:=\varphi^{\circ n}(z_{0})$. By Lemma~\ref{Le:V-sets}\+\ref{V-sets-fact4}, ${\phdisk(z_{n},1/9)\subset V_{\psi}(1/3)}$ for all~${n\in\Natural}$. By Theorem~\ref{Thm:V-sets}\+\ref{IT_V-sets(4)}, $\psi$ is univalent in $V_{\psi}(1/3)$. As a consequence,  $h_{1}={h_{\varphi}\circ \psi}$ is univalent in $\phdisk(z_{n},1/9)$ for any~${n\in\Natural}$. Therefore, by Theorem~\ref{Thm:uniqueness}, there is a constant~$c$ such that ${h_{\varphi}\circ \psi}=h_{1}={h_{\varphi}+c}$. Thus, $\psi={h_{\varphi}^{-1}\circ(h_{\varphi}+c)}$ is univalent.

\StepP{\ref{IT_THzero-univalent2}} Take $f:\D\to V$ a Riemann map of $V$ and define $\mtilde \varphi:=f^{-1}\circ \varphi\circ f$. Clearly, $\mtilde \varphi$ is univalent and, by Proposition~\ref{modelV_par}, it is of zero hyperbolic step.
Define ${\mtilde \psi:=f^{-1}\circ \psi\circ f}$. Since $\psi(V)\subset V$, $\mtilde \psi$ is a well-defined holomorphic self-map of~$\UD$ commuting with $\mtilde \varphi$. Thus, by~\ref{IT_THzero-univalent}, $\mtilde \psi$ is univalent and then so is $\psi$ in~$V$.

\StepP{\ref{IT_THzero-SimLin}} In part, our argument here looks similar to that we have used to prove~\ref{IT_THzero-univalent}, but it is important to emphasize that now we work with $V_\varphi(1/9)$ and $V_\varphi(1/3)$ instead of the analogous sets defined for~$\psi$.  Take ${z_{0}\in V_{\varphi}(1/9)}$ and write ${z_{n}:=\varphi^{\circ n}(z_{0})}$. By Lemma~\ref{Le:V-sets}\+\ref{V-sets-fact4a} applied for~$\varphi$ in place of~$\psi$, we have ${\phdisk(z_{n},1/9)\subset V_{\varphi}(1/3)}$.
Recall that $V_{\varphi}(1/3)$ is a $\varphi$-absorbing set and that $\varphi$ and $h_{\varphi}$ are univalent in~$V_{\varphi}(1/3)$, thanks to Theorem~\ref{Thm:V-sets}. Moreover, by Lemma~\ref{Le:V-sets}\+\ref{V-sets-fact1},
${\psi (V_{\varphi}(1/3))\subset V_{\varphi}(1/3)}$. As a consequence, $\psi$ is univalent in~$V_{\varphi}(1/3)$ by statement~\ref{IT_THzero-univalent2} for~$V:=V_\varphi(1/3)$. Thus, $h_{1}:={h_{\varphi}\circ \psi}$ is univalent in $V_{\varphi}(1/3)$ and hence in $\phdisk(z_{n},1/9)$ for all~${n\in\Natural}$. Applying again Theorem \ref{Thm:uniqueness}, we see that there is a constant $c$ such that $h_{\varphi }\circ \psi=h_{1}=h_{\varphi}+c$, as desired.

\StepP{\ref{IT_THzero-abelian}} Let ${\psi_{1},\psi_{2}\in\Zn(\varphi)}$.
Take ${f:\D\to V_{\varphi}(1/3)}$  a Riemann map of $V_{\varphi}(1/3)$ and define $\mtilde \varphi:={f^{-1}\circ \varphi\circ f}$, $\mtilde \psi_{1}:={f^{-1}\circ \psi_{1}\circ f}$ and $\mtilde \psi_{2}:={f^{-1}\circ \psi_{2}\circ f}$.
Then by Proposition~\ref{modelV_par}\+\ref{IT_modelV-zero-step}, $\mtilde \varphi$ is parabolic of zero hyperbolic step. Furthermore, in view of~Lemma~\ref{Le:V-sets}\+\ref{V-sets-fact1}, $\mtilde\psi_1$ and $\mtilde\psi_2$ are well-defined  self-maps of~$\UD$ that commute with~$\mtilde \varphi$.
With the help of~\ref{IT_THzero-SimLin},  it follows easily that
$$h_\varphi\circ f\circ \mtilde\psi_1\circ\mtilde\psi_2=h_\varphi\circ f\circ \mtilde\psi_2\circ\mtilde\psi_1.
$$
Since $h_\varphi\circ f\in\U(\D,\C)$ by Theorem~\ref{Thm:V-sets}\+\ref{IT_V-sets(4)}, we have $$\mtilde\psi_1\circ\mtilde\psi_2=\mtilde\psi_2\circ\mtilde\psi_1.
$$
Thus $\psi_{1}|_{V_{\varphi}(1/3)}$ and $\psi_{2}|_{V_{\varphi}(1/3)}$ commute and so do $\psi_{1}$ and~$\psi_{2}$.
\end{proof}

Basic properties of the map that takes $\psi\in\Zn(\varphi)$ to the simultaneous linearization coefficient of~$\psi$ w.r.t.~$\varphi$ are given in the following result, which is an analogue of Pragner's Theorem for centralizers of elliptic self-maps~\cite[Theorem~3]{Pranger}; compare with~\cite[Theorem~2]{BisGen01} and with our earlier results~\cite[Theorems~3.5 and~5.2]{CDG-Centralizer} for the univalent case.

\begin{theorem}\label{Thm:homeomorphism}
Let $\varphi\in\Hol(\UD)$ be parabolic of zero hyperbolic step. Then the map
$$
  \Zn(\varphi)\ni\psi \mapsto c_{\varphi,\psi}\in\C
$$
is a homeomorphism onto a closed subset $\mathcal A_\varphi\subset\C$. Moreover,
\begin{equation}
 c_{\varphi,\psi_1\circ\psi_2}=c_{\varphi,\psi_1}+c_{\varphi,\psi_2}\quad\text{for any~$~\psi_1,\psi_2\in\Zn(\varphi)$}
\end{equation}
and, as a consequence, $[\mathcal A_\varphi,+]$ is an additive semigroup containing~$\N_0$.
\end{theorem}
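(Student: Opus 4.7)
The plan is to verify four properties in turn: (a) additivity of the map $\psi\mapsto c_{\varphi,\psi}$ together with the containment $\N_0\subset\mathcal A_\varphi$, (b) injectivity, (c) continuity, and (d) closedness of the image together with continuity of the inverse. Throughout I would set $V:=V_\varphi(1/3)$, which by Theorem~\ref{Thm:V-sets} is $\varphi$-absorbing and admits $h_\varphi$ as a univalent map; by Lemma~\ref{Le:V-sets}\+\ref{V-sets-fact1} one has $\psi(V)\subset V$ for every $\psi\in\Zn(\varphi)$.

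The first three steps are short. For (a), I would just compute
$$
  h_\varphi\circ(\psi_1\circ\psi_2)=(h_\varphi+c_{\varphi,\psi_1})\circ\psi_2=h_\varphi+c_{\varphi,\psi_1}+c_{\varphi,\psi_2},
$$
which gives additivity; applying it inductively to $\varphi$ yields $c_{\varphi,\varphi^{\circ n}}=n$, hence $\N_0\subset\mathcal A_\varphi$. For (b), I would note that $c_{\varphi,\psi_1}=c_{\varphi,\psi_2}$ forces $h_\varphi\circ\psi_1=h_\varphi\circ\psi_2$; univalence of $h_\varphi|_V$ together with $\psi_j(V)\subset V$ then gives $\psi_1=\psi_2$ on $V$, and the identity principle extends this to $\UD$. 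For (c), at any fixed $z_0\in\UD$ one has $c_{\varphi,\psi_n}=h_\varphi(\psi_n(z_0))-h_\varphi(z_0)$, so local uniform convergence $\psi_n\to\psi$ forces $c_{\varphi,\psi_n}\to c_{\varphi,\psi}$.

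The main effort goes into (d). Given $(\psi_n)\subset\Zn(\varphi)$ with $c_n:=c_{\varphi,\psi_n}\to c\in\C$, I would aim to produce $\psi\in\Zn(\varphi)$ with $c_{\varphi,\psi}=c$ and $\psi_n\to\psi$. Montel's theorem supplies a subsequence $(\psi_{n_k})$ converging locally uniformly in $\UD$ to some $\psi^*\in\Hol(\UD,\oD)$, and the maximum principle forces the dichotomy that either $\psi^*\in\Hol(\UD)$ or $\psi^*\equiv\sigma$ for some $\sigma\in\UC$. The hard part, and the main obstacle, is to rule out the second alternative. My plan is to exhibit one point $z_0'\in V$ at which the images $\psi_{n_k}(z_0')$ are visibly bounded away from $\UC$. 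Concretely, for any $z_0\in V$, Remark~\ref{Re:almostasemigroup} applied to the compact set $K:=\{h_\varphi(z_0)+c\}$ and to the $\varphi$-absorbing open set $V$ produces $N\in\N$ with $h_\varphi(z_0)+c+N\in h_\varphi(V)$; setting $z_0':=\varphi^{\circ N}(z_0)\in V$, so that $h_\varphi(z_0')+c\in h_\varphi(V)$, the relation
$$
  h_\varphi\bigl(\psi_{n_k}(z_0')\bigr)=h_\varphi(z_0')+c_{n_k}~\longrightarrow~h_\varphi(z_0')+c
$$
together with $\psi_{n_k}(z_0')\in V$ and continuity of $(h_\varphi|_V)^{-1}$ on the open set $h_\varphi(V)$ forces $\psi_{n_k}(z_0')\to(h_\varphi|_V)^{-1}(h_\varphi(z_0')+c)\in V\subset\UD$. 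Hence $\psi^*(z_0')\in\UD$, which is incompatible with $\psi^*\equiv\sigma\in\UC$.

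Once $\psi^*\in\Hol(\UD)$ is secured, passing to the limit in $\psi_{n_k}\circ\varphi=\varphi\circ\psi_{n_k}$ shows $\psi^*\in\Zn(\varphi)$, while passing to the limit in $h_\varphi\circ\psi_{n_k}=h_\varphi+c_{n_k}$ shows $c_{\varphi,\psi^*}=c$. Injectivity from step (b) then guarantees that every convergent subsequence of $(\psi_n)$ has the same limit $\psi^*$, so the subsequence--subsequence principle yields $\psi_n\to\psi^*$ in $\Hol(\UD)$. This simultaneously gives that $\mathcal A_\varphi\subset\C$ is closed and that the inverse map $\mathcal A_\varphi\to\Zn(\varphi)$ is continuous, completing the scheme.
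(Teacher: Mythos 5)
Your proposal is correct, and it takes a genuinely different route from the paper's proof. The paper reduces to the univalent case: it conjugates by a Riemann map $f$ of $V:=V_\varphi(1/3)$ to obtain a univalent parabolic self-map $\mtilde\varphi=f^{-1}\circ\varphi\circ f$ of zero hyperbolic step (Proposition~\ref{modelV_par}\+\ref{IT_modelV-zero-step}), observes via Theorem~\ref{Thm:zero-hyperbolic-step}\+\ref{IT_THzero-univalent} that every element of $\Zn(\mtilde\varphi)$ is then univalent, and invokes the already-established homeomorphism $\mathfrak T_{\mtilde\varphi}:\mathcal A_{\mtilde\varphi}\to\Zn(\mtilde\varphi)$ from \cite[Theorems~3.1 and~5.2]{CDG-Centralizer}; the desired map is then identified with $\mathfrak T_{\mtilde\varphi}^{-1}\circ\mathfrak P_\varphi$ and closedness comes from Corollary~\ref{Cor:projection}. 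You instead prove the homeomorphism properties directly via a normal-families argument without passing through the univalent reduction or citing the external result. The delicate point in your version is ruling out that a sequence $(\psi_n)$ with convergent coefficients degenerates to a boundary constant, and your use of Remark~\ref{Re:almostasemigroup} to manufacture a base point $z_0'=\varphi^{\circ N}(z_0)$ with $h_\varphi(z_0')+c\in h_\varphi(V)$, so that $\psi_n(z_0')$ is trapped in $V$ and converges to $(h_\varphi|_V)^{-1}(h_\varphi(z_0')+c)$, is a clean and correct way to do it. What the paper's route buys is brevity by reusing the machinery already built for the univalent setting; what yours buys is self-containedness and a transparent topological mechanism, at the cost of having to handle Montel degeneration by hand.
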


\begin{proof}
Set $V:=V_{\varphi}(1/3)$ and let $f$ be a conformal map of~$\UD$ onto~$V$. Then ${\varphi(V)\subset V}$ and ${\psi(V)\subset V}$ for any~${\psi\in\Zn(\varphi)}$, see Theorem~\ref{Thm:V-sets}\+\ref{IT_V-sets(1)} and Lemma~\ref{Le:V-sets}\+\ref{V-sets-fact1}. Consider the map
$$
 \mathfrak P_\varphi:\Zn(\varphi)\to\Zn(\mtilde\varphi);~\psi\mapsto\mtilde\psi:={f^{-1}\circ \psi\circ f}
$$
that transforms a self-map $\psi\in\Hol(\UD)$ commuting with~$\varphi$ into the self-map $\mtilde \psi\in \Hol(\D)$ commuting with $\mtilde \varphi:={f^{-1}\circ \varphi\circ f\in \Hol(\D)}$.

By Theorem~\ref{Thm:V-sets}\+\ref{IT_V-sets(4)}, $\mtilde\varphi$ is univalent, and by Proposition~\ref{modelV_par}\+\ref{IT_modelV-zero-step}, it is parabolic of zero hyperbolic step.
Therefore, by Theorem~\ref{Thm:zero-hyperbolic-step}\+\ref{IT_THzero-univalent}, all elements of~$\Zn(\mtilde\varphi)$ are univalent. These facts allow us to apply~\cite[Theorems~3.1 and~5.2]{CDG-Centralizer}, according to which the map
$$
  \mathfrak T_{\mtilde\varphi}:\mathcal A_{\mtilde\varphi}\to\Zn(\mtilde\varphi)\subset\U(\UD);~
    c\mapsto\mtilde\psi_c:=h_{\mtilde\varphi}^{-1}\circ\big(h_{\mtilde\varphi}+c\big)
$$
is a homeomorphism of the closed set $\mathcal A_{\mtilde\varphi}:=\{c\in\Complex\colon \mtilde\Omega+c\subset\mtilde\Omega\}\subset\C$, ${\mtilde\Omega:=h_{\mtilde\varphi}(\UD)}$, onto the centralizer of~$\mtilde\varphi$.

By Proposition~\ref{modelV_par}\+\ref{IT_modelV-zero-step} and Remark~\ref{RM_normalization}, the Koenigs map $h_{\mtilde\varphi}$ coincides, up to an additive constant, with~$h_\varphi\circ f$. Therefore, given any~${\psi\in\Zn(\varphi)}$, the equality ${h_\varphi\circ\psi=h_\varphi+c_{\varphi,\psi}}$ implies that ${h_{\mtilde\varphi}\circ\mtilde\psi=h_{\mtilde\varphi}+c_{\varphi,\psi}}$ with ${\mtilde\psi:=\mathfrak P_\varphi(\psi)}$. As a result, we can conclude that  ${\Zn(\varphi)\ni\psi\mapsto c_{\varphi,\psi}}$ coincides with the composition $\mathfrak T_{\mtilde\varphi}^{-1}\circ\mathfrak P_\varphi$.
By Corollary \ref{Cor:projection},  $\mathfrak P_\varphi$ is a homeomorphism onto its image~$\mathfrak P_\varphi(\Zn(\varphi))$, which is a relatively closed subset of~$\Zn(\mtilde\varphi)$.
Together with the preceeding argument, this implies that the map
$$
{\big(\Zn(\varphi)\ni\psi\mapsto c_{\varphi,\psi}\big)}=\mathfrak T_{\mtilde\varphi}^{-1}\circ\mathfrak P_\varphi
$$
is a homeomorphism of $\Zn(\varphi)$ onto the closed set
$$
 \mathcal A_\varphi:=\{c_{\varphi,\psi}:\psi\in\Zn(\varphi)\}~=~\mathfrak T_{\mtilde\varphi}^{-1}\big(\mathfrak P_\varphi(\Zn(\varphi))\big)~\subset~\mathcal A_{\mtilde\varphi}~\subset\C.
$$

It remains to observe that
$$
  c_{\varphi,\psi_1\circ\psi_2}~=~h_\varphi\circ\psi_1\circ\psi_2\,-\,h_\varphi~=~
    (h_\varphi\circ\psi_1-h_\varphi)\circ\psi_2~+~h_\varphi\circ\psi_2-h_\varphi~=~
    c_{\varphi,\psi_1}+c_{\varphi,\psi_2}
$$
for any $\psi_1,\psi_2\in\Zn(\varphi)$.
\end{proof}

\medskip
\begin{proof}[\proofof{Theorem~\ref{Thm:embedding}}]
By the hypothesis ${\Zn(\varphi)\setminus\{\id_\UD\}}$ contains a sequence $(\psi_n)$ converging locally uniformly in~$\UD$ to the identity map. We have to show that $\Zn(\varphi)$ contains a non-trivial continuous one-parameter semigroup.

By Theorem~\ref{Thm:homeomorphism}, the sequence $(c_{\varphi,\psi_{n}})$ converges to $c_{\varphi,\id_{\D}}=0$. Notice that $c_{\varphi,\psi_{n}}\neq 0$ for all~$n$.
Passing to a subsequence we may suppose that also ${\arg (c_{\varphi,\psi_{n}})\to\theta_0}$ as ${n\to+\infty}$, for some ${\theta_0\in\Real}$. Now fix some ${t>0}$ and write $k(n):=\big\lfloor t/|c_{\varphi,\psi_{n}}|\big\rfloor$, to denote the integer part of ${t/|c_{\varphi,\psi_{n}}|}$. Since
$$
k(n)|c_{\varphi,\psi_{n}}|\leq t<(k(n)+1)|c_{\varphi,\psi_{n}}| \quad \textrm{ for all } \ n,
$$
it is easy to see that
\begin{equation*}
k(n)c_{\varphi,\psi_{n}}\to t e^{i\theta_0}\quad\text{as~$~n\to+\infty$}.
\end{equation*}
Note that $k(n)c_{\varphi,\psi_{n}}$ equals the simultaneous linearization coefficient of $\psi_{n}^{\circ k(n)}\in\Zn(\varphi)$.
Since by Theorem~\ref{Thm:homeomorphism}, $\mathcal A_\varphi:=\{c_{\varphi,\psi}:\psi\in\Zn(\varphi)\}$ is a closed subset of~$\C$ containing~$0$, it follows
that ${\{t e^{i\theta_0}:t\ge0\}\subset \mathcal A_\varphi}$. This means that $\Zn(\varphi)$ contains a family $(\phi_t)_{t\ge0}$ such that
$$
 c_{\varphi,\phi_t}=t e^{i\theta_0}\quad\text{for any~$~t\ge0$}.
$$

The family $(\phi_t)_{t\ge0}$ having this property is a non-trivial continuous one-parameter semigroup because by Theorem~\ref{Thm:homeomorphism}, the map ${\psi\mapsto c_{\varphi,\psi}}$ is an isomorphism between the topological unital semigroups ${[\Zn(\varphi),\circ]}$ and ${[\mathcal A_\varphi,+]}$.
The univalence of $\varphi$ is now a consequence of \cite[Proposition~3.3]{CDG-Embedding}.

Finally, if $\varphi$ has a boundary fixed point~${\sigma\in\UC}$ different from its Denjoy\,--\,Wolff point, then $\varphi$ coincides with one of the elements of~$(\phi_t)$ by \cite[Theorem~1.4]{CDG-Embedding}.
\end{proof}

\begin{remark}
Clearly, if in the above proof $\theta_0=0$, then $\phi_1=\varphi$ and hence $\varphi$ is embeddable. At the same time, if $0$ is not in the limit set of the sequence $(\arg (c_{\varphi,\psi_{n}}))$, then $\varphi$ does not have to be embeddable, even though $\Zn(\varphi)$ contains a non-trivial continuous one-parameter semigroup  (see e.g. \cite[Example~8.4]{CDG-Centralizer}).
\end{remark}

\section{The simultaneous linearization coefficient}\label{S_SLC}
\addtocontents{toc}{\SkipTocEntry}\subsection{Statements of the results}
For a parabolic self-map $\varphi$ of zero hyperbolic step and an element~$\psi$ of its centralizer~$\Zn(\varphi)$, the simultaneous linearization coefficient $c_{\varphi,\psi}$ was introduced in the previous section with the help of the Koenigs function~$h_\varphi$ of~$\varphi$. Namely, $c_{\varphi,\psi}$ is defined as the constant ${h_\varphi\circ\psi\,-\,h_\varphi}$. Below we establish two other ways to express $c_{\varphi,\psi}$.

\begin{theorem}\label{TH_SLC(2formulas)}
Let $\varphi\in\Hol(\UD)$ be a parabolic self-map of zero hyperbolic step with Denjoy\,--\,Wolff point $\tau\in\partial\D$ and Koenigs function $h_{\varphi}$.  Let $\psi\in\Zn(\varphi)$. The following two statements hold.
\begin{statlist}
 \item\label{IT_SLC1}
   The following angular limit exists and equals the simultaneous linearization coefficient $c_{\varphi,\psi}$ of~$\psi$ \,w.\,r.\,t.~$\varphi$:
   \begin{equation}\label{EQ_SLC}
     \anglim_{z\to\tau}\frac{\psi(z)-z}{\varphi(z)-z}~=~c_{\varphi,\psi}.
   \end{equation}
 \smallskip%
  \item\label{IT_SLC2}
    Suppose that
	\begin{equation}\label{EQ_SLsystem}
	h\circ \varphi=h+c_1,\qquad h\circ \psi=h+c_2,
	\end{equation}
	for some holomorphic function $h:\UD\to\C$ and  some ${c_1,c_2\in\C}$ with $|c_1|^2+|c_2|^2\neq0$.\\ Then ${c_1\neq0}$ and ${c_2/c_1=c_{\varphi,\psi}}$.
\end{statlist}
\end{theorem}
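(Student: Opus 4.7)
The plan is to transfer both assertions to the Koenigs coordinate $w=h_\varphi(z)$, exploiting that on the absorbing isogonal domain $V:=V_\varphi(1/3)$ (Theorem~\ref{Thm:V-sets}), $h_\varphi$ is univalent and conjugates $\varphi$ and $\psi$ to the translations $w\mapsto w+1$ and $w\mapsto w+c$, where $c:=c_{\varphi,\psi}$.

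For assertion~\ref{IT_SLC1}, writing $g:=h_\varphi^{-1}|_{h_\varphi(V)}$ one has, for $z\in V$,
\begin{equation*}
 \frac{\psi(z)-z}{\varphi(z)-z}\;=\;\frac{g(w+c)-g(w)}{g(w+1)-g(w)},\qquad w:=h_\varphi(z),
\end{equation*}
so the task reduces to showing this divided-difference ratio tends to~$c$ as $z\to\tau$ non-tangentially. By isogonality of~$V$ at~$\tau$, any Stolz angle eventually lies in $V_\varphi(r)$ for every~$r>0$ (Theorem~\ref{Thm:V-sets}), and consequently $\phd_\D(z,\varphi(z))\to0$. Combining this with Lemma~\ref{Le:V-sets}\+\ref{V-sets-fact3} gives that pseudohyperbolic discs of fixed radius around~$z$ eventually sit inside~$V$, yielding the density comparison $\rho_V(z,\varphi(z))\to0$; by conformal invariance, $\rho_{h_\varphi(V)}(w,w+1)\to0$, and then the Koebe one-quarter theorem applied to the simply connected domain $h_\varphi(V)$ converts this into $\dist(w,\partial h_\varphi(V))\to+\infty$. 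Once this is in hand, Koebe's distortion theorem applied to $g$ on the Euclidean disc of radius $\dist(w,\partial h_\varphi(V))$ centered at~$w$ yields $g(w+s)-g(w)=g'(w)\,s\,(1+o(1))$ uniformly for $s\in\{1,c\}$, and the quotient tends to~$c$. The main technical obstacle is the density comparison underlying $\dist(w,\partial h_\varphi(V))\to+\infty$.

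For assertion~\ref{IT_SLC2}, I would push the two simultaneous Abel equations to the Koenigs coordinate by setting $H:=h\circ h_\varphi^{-1}$ on $h_\varphi(V)$. Using $\psi(V)\subset V$ (Lemma~\ref{Le:V-sets}\+\ref{V-sets-fact1}) one checks $H(w+1)=H(w)+c_1$ and $H(w+c)=H(w)+c_2$ on $h_\varphi(V)$. By Remark~\ref{Re:almostasemigroup}, $h_\varphi(V)$ contains $K+[t_0(K),+\infty)$ for every compact $K\subset\C$, which lets me define an entire extension $\mtilde H\colon\C\to\C$ unambiguously via $\mtilde H(w):=H(w+n)-nc_1$ for any sufficiently large integer~$n$; both functional equations then persist on all of~$\C$.

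The final step is to extract the scalar identity $c_2=c\cdot c_1$, which simultaneously yields $c_1\neq0$ (otherwise $c_2=0$, contradicting $|c_1|^2+|c_2|^2\neq0$) and the desired $c_2/c_1=c_{\varphi,\psi}$. Differentiating both equations shows that $\mtilde H'$ has additive periods $1$ and~$c$. I would split into two cases. If $c\notin\R$, the periods are $\R$-linearly independent, so the entire function $\mtilde H'$ is doubly periodic, hence bounded on a fundamental parallelogram and constant by Liouville; then $\mtilde H$ is affine and the equations force $c_2=c\cdot c_1$ immediately. If $c\in\R$, iterating both equations yields
\begin{equation*}
 \mtilde H(w+\{nc\})\;=\;\mtilde H(w)+\{nc\}\,c_1+n(c_2-c c_1)
\end{equation*}
for every $n\in\N$, where $\{nc\}\in[0,1)$ denotes the fractional part of~$nc$; the left-hand side stays bounded by continuity of $\mtilde H$ on the compact interval $[w,w+1]$, while the term $n(c_2-c c_1)$ would be unbounded unless $c_2=c c_1$, so the identity is forced. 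The reduction to the doubly translation-equivariant entire function $\mtilde H$ is the substantive move; the ensuing case analysis is a routine Liouville-style argument.
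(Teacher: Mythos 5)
Your proposal is correct, and it follows a genuinely different technical route than the paper's, even though the underlying engines (Koebe-type distortion for univalent maps, period/Liouville arguments for entire functions) are the same. For part~(A), the paper proves the intermediate derivative identity $\anglim_{z\to\tau}\tfrac{h_\varphi(\psi(z))-h_\varphi(z)}{h'_\varphi(z)(\psi(z)-z)}=1$ (Proposition~\ref{positivo2}) by normalizing via the Cayley transform, building a family of univalent functions $g_\xi$ on $\UD$ (Lemma~\ref{lema_constante} controls the scale $\tfrac15\Re\xi$), and invoking Cauchy's formula plus the Growth Theorem; you instead push everything to the Koenigs domain, write the quotient as a divided-difference ratio of $g=h_\varphi^{-1}$, and apply Koebe distortion on Euclidean discs whose radius $\dist(w,\partial h_\varphi(V))$ you show diverges via isogonality of the $V_\varphi(r)$'s and hyperbolic-metric comparison. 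Your density argument is sound: once $z\in V_\varphi(1/9)$, Lemma~\ref{Le:V-sets}\+\ref{V-sets-fact3} gives $\phdisk(z,1/9)\subset V$, and distance-monotonicity of the inclusion immediately yields $\rho_V(z,\varphi(z))\le\tanh^{-1}(9\,\phd_\D(z,\varphi(z)))\to0$ without needing to control the geodesic pointwise. For part~(B), the paper factorizes $h=\beta\circ h_\varphi$ through the abstract semimodel morphism (Lemma~\ref{Le:morphism}) and splits by $c\in\mathbb Q$ versus $c\notin\mathbb Q$, handling the rational case via $\psi^{\circ n}=\varphi^{\circ m}$ (Proposition~\ref{Prop:firstpropertiescoefficient}) and the irrational case via elliptic-function period theory; you construct the entire extension $\mtilde H$ directly from Remark~\ref{Re:almostasemigroup} and split by $c\in\R$ versus $c\notin\R$, with the real case handled by an elementary boundedness argument that uniformly absorbs the rational and irrational-real subcases. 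Your construction of $\mtilde H$ is in fact the function $\beta$ of the paper's proof, recovered without invoking Lemma~\ref{Le:morphism}. The trade-off: your approach is more hands-on and self-contained (no appeal to the theory of periods of meromorphic functions, no abstract morphism lemma), whereas the paper's stays closer to the model-theoretic framework; both establish the same key facts, namely $c_1\neq0$ and $c_2=c_{\varphi,\psi}c_1$. Two minor points worth making explicit in a write-up: the trivial case $\psi=\id_\UD$ ($c_{\varphi,\psi}=0$, both sides of~\eqref{EQ_SLC} vanish, $c_2=0$ forces $c_1\neq0$) should be set aside at the start, and the use of Theorem~\ref{Thm:zero-hyperbolic-step}\+\ref{IT_THzero-SimLin} to identify $h_\varphi\circ\psi-h_\varphi=c_{\varphi,\psi}$ should be cited when you pass from $\psi(z)$ to $g(w+c)$.
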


In~\cite{CDG-Centralizer} we proved part~\ref{IT_SLC1} of the above theorem for \textit{univalent} parabolic self-maps. Here we remove the univalence assumption but suppose that $\varphi$ is of zero
hyperbolic case. The case of positive hyperbolic case will be covered in~\cite{CDG-positive-h.step}.

The second part~\ref{IT_SLC2} is a sort of uniqueness result. Indeed, by Theorem~\ref{Thm:zero-hyperbolic-step}\+\ref{IT_THzero-SimLin} and Definition~\ref{simul-linea-coefficient-zero}, the Koenigs function ${h:=h_\varphi}$ of~$\varphi$ satisfies~\eqref{EQ_SLsystem} with ${(c_1,c_2)}:={(1,c_{\varphi,\psi})}$. Taking this into account, Theorem~\ref{TH_SLC(2formulas)}\+\ref{IT_SLC2} implies that the simultaneous linearization problem~\eqref{EQ_SLsystem} admits a holomorphic solution ${h:\UD\to\C}$ if and only if ${(c_1,c_2)\in\C^2}$ is a multiple of~${(1,c_{\varphi,\psi})}$. Concerning the uniqueness of~$h$, we will prove the following statement.
\begin{proposition}\label{PR_SLC(cont)}
If under the hypothesis of Theorem~\ref{TH_SLC(2formulas)}\+\ref{IT_SLC2}, the simultaneous linearization coefficient is not a rational real number, i.e. if ${c_{\varphi,\psi}\not\in\mathbb Q},$ then
$$
 h-h(0)=c_1 h_\varphi.
$$
\end{proposition}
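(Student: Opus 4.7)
The plan is to reduce the statement to the uniqueness of the Koenigs function by using the irrationality hypothesis to extend $h$ to an entire simultaneous linearizer whose ``periodic part'' is forced to be constant. Write $c:=c_{\varphi,\psi}$ throughout.

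First, I would normalize. By Theorem~\ref{TH_SLC(2formulas)}\+\ref{IT_SLC2} one has $c_1\neq 0$ and $c=c_2/c_1$, so setting $\mtilde h:=(h-h(0))/c_1$ gives $\mtilde h(0)=0$, $\mtilde h\circ\varphi=\mtilde h+1$, and $\mtilde h\circ\psi=\mtilde h+c$. Since $h_\varphi(0)=0$ by Remark~\ref{RM_normalization}, it suffices to prove $\mtilde h=h_\varphi$.

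Next, I would transfer to the Koenigs coordinate on the absorbing set $V:=V_\varphi(1/3)$: by Theorem~\ref{Thm:V-sets}, $V$ is simply connected and $\varphi$-absorbing and $h_\varphi$ is univalent on~$V$; by Lemma~\ref{Le:V-sets}\+\ref{V-sets-fact1}, $V$ is also $\psi$-invariant. Set $\Omega:=h_\varphi(V)$, so that $\Omega+1\subset\Omega$ and $\Omega+c\subset\Omega$, and consider $F:=\mtilde h\circ h_\varphi^{-1}:\Omega\to\C$. A direct check yields $F(w+1)=F(w)+1$ and $F(w+c)=F(w)+c$ for every $w\in\Omega$. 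Invoking Remark~\ref{Re:almostasemigroup}, which supplies for every compact $K\subset\C$ some $N$ with $K+n\subset\Omega$ whenever $n\geq N$, I would extend $F$ to an entire map $\mtilde F:\C\to\C$ by $\mtilde F(w):=F(w+n)-n$ for any sufficiently large $n\in\N$. Consistency of the definition follows from $F(w+1)=F(w)+1$, and both functional equations pass to $\mtilde F$ on the whole of $\C$.

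The last step is a Liouville-type argument for $G(w):=\mtilde F(w)-w$, which is entire and satisfies $G(w+1)=G(w)$ and $G(w+c)=G(w)$. Since $c\notin\Q$, I would split into two sub-cases: if $c\notin\R$, then $1$ and $c$ span a lattice in~$\C$, $G$ is doubly periodic, hence bounded on a fundamental parallelogram, hence constant by Liouville; if $c\in\R\setminus\Q$, then $\{n+mc:n,m\in\Z\}$ is dense in~$\R$, and continuity forces $G$ to be invariant under all real translations, so $G'\equiv 0$ on $\R$ and $G$ is constant by analytic continuation. In either case $\mtilde F(w)=w+K$ for some $K\in\C$, so $\mtilde h=h_\varphi+K$ on $V$ and hence on $\D$ by the identity principle; evaluating at $z=0$ forces $K=0$. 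The only delicate step is the extension of $F$ to an entire function: this relies crucially on the zero-hyperbolic-step hypothesis through Remark~\ref{Re:almostasemigroup}, since for positive hyperbolic step the base space of the canonical model is only a half-plane and the extension argument breaks down.
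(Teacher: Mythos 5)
Your argument is correct, and its overall shape (reduce to an entire function with two incommensurable ``periods,'' then conclude it is affine) coincides with the paper's, but the route to the entire function is genuinely different. The paper's proof is folded into the proof of Theorem~\ref{TH_SLC(2formulas)}\+\ref{IT_SLC2}: there, one observes that $(S',h,z\mapsto z+c_1)$ with $S'=\bigcup_{n\in\N}\big(h(\D)-nc_1\big)$ is a holomorphic semimodel for~$\varphi$, and then invokes the abstract factorization Lemma~\ref{Le:morphism} to produce an entire map $\beta$ with $h=\beta\circ h_\varphi$; the functional relations~\eqref{EQ-beta-period} follow, and $\beta'$ is shown to be constant by citing that a non-constant entire function has at most one primitive period. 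You instead transplant $\mtilde h$ through the Koenigs coordinate on the absorbing set $V_\varphi(1/3)$ and extend $F$ to an entire $\mtilde F$ by hand, using $F(w+1)=F(w)+1$ together with Remark~\ref{Re:almostasemigroup}; your extension step is in effect a concrete, self-contained re-derivation of what Lemma~\ref{Le:morphism} furnishes abstractly. Your concluding step is also more elementary: the two sub-cases (double periodicity plus Liouville when $c\notin\R$; dense group of real periods plus continuity when $c\in\R\setminus\Q$) replace the paper's citation to elliptic-function theory, and yield the same conclusion. Both approaches use the zero-hyperbolic-step hypothesis in the same place---it makes the base space of the canonical model all of~$\C$, so that $\beta$ (resp.\ $\mtilde F$) is genuinely entire---and your closing remark correctly identifies why the argument fails in the positive-step case. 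One tiny point of phrasing: from $G(w+t)=G(w)$ for all $t\in\R$ you get $G'\equiv0$ on all of~$\C$ directly (differentiate in $t$ at $t=0$ for each fixed $w\in\C$), so no analytic continuation is needed; this does not affect the validity of the proof.
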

The hypothesis that $c_{\varphi,\psi}$ is not a rational number  in the above proposition is essential. Indeed, if ${c_{\varphi,\psi}=p/q\in\mathbb Q}$, then every function of the form $h:={h_\varphi+F\circ h_\varphi}$, where $F$ is a $1/q$-periodic entire function, satisfies~\eqref{EQ_SLsystem}.

Suppose now that the two parabolic self-maps $\varphi$ and~$\psi$ are \textit{both} of zero hyperbolic step. Then combining Theorem~\ref{Thm:zero-hyperbolic-step}\+\ref{IT_THzero-SimLin} with the above proposition applied to $\varphi$ and $\psi$ interchanged, we see that $h_\varphi=c_{\varphi,\psi} h_\psi$ provided that ${c_{\varphi,\psi}\not\in\mathbb Q}$. As our next result shows, in this case the latter condition is not essential.

\begin{theorem}\label{TH_step-and-Koenigs}
Let $\varphi$ and $\psi$ be commuting parabolic self-maps of the unit disc. The following three statements hold.
\begin{statlist}
  \item\label{IT_SaK-both-zero-st}
    If both $\varphi$ and $\psi$ are of zero hyperbolic step, then $h_\psi=c^{-1}_{\varphi,\psi}\,h_\varphi.$
  \item\label{IT_SaK-composition}
     If $\varphi$ is of zero hyperbolic step, then $\varphi\circ\psi$ is also of zero hyperbolic step and different from~$\id_\UD${\rm;} in particular, ${h_{\varphi\circ\psi}=c^{-1}_{\varphi,\varphi\circ\psi}\,h_\varphi}.$
  \item\label{IT_SaK-real-SLC}
   Suppose that system~\eqref{EQ_SLsystem} admits a solution ${h\in\Hol(\UD,\C)}$ for some ${(c_1,c_2)\in\Real^2\setminus\{(0,0)\}}$. If~$\varphi$~is of zero hyperbolic step, then $\psi$ is also of zero hyperbolic step {\rm(}and hence ${h_\psi=c^{-1}_{\varphi,\psi}\,h_\varphi}${\rm)}.
\end{statlist}
\end{theorem}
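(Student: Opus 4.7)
For part~\ref{IT_SaK-both-zero-st}, I may assume $\psi\neq\id_\UD$, since otherwise the statement is vacuous. Writing $c:=c_{\varphi,\psi}$, which is nonzero by Proposition~\ref{Prop:firstpropertiescoefficient}\+\ref{firstpropereties1}, the defining identity $h_\varphi\circ\psi=h_\varphi+c$ immediately shows that $F:=c^{-1}h_\varphi$ satisfies Abel's equation for $\psi$, namely $F\circ\psi=F+1$. The plan is to invoke the uniqueness Theorem~\ref{Thm:uniqueness} applied to~$\psi$ to identify $F$ with $h_\psi$ up to an additive constant which normalization will kill. To verify the univalence hypothesis there, I exploit that $\varphi\in\Zn(\psi)$ by commutativity, so Lemma~\ref{Le:V-sets}\+\ref{V-sets-fact4} with the roles of $\varphi$ and~$\psi$ interchanged gives $\phdisk(\psi^{\circ n}(z_0),1/9)\subset V_\varphi(1/3)$ for every $n\ge0$ and every $z_0\in V_\varphi(1/9)$; since $h_\varphi$, and thus $F$, is univalent on $V_\varphi(1/3)$ by Theorem~\ref{Thm:V-sets}\+\ref{IT_V-sets(4)}, it is enough to pick $z_0$ additionally in $V_\psi(1/3)$ so that $h'_\psi(z_0)\neq 0$. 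Theorem~\ref{Thm:uniqueness} then yields $F-h_\psi\equiv\mathrm{const}$, and the normalizations $h_\varphi(0)=h_\psi(0)=0$ force the constant to vanish.

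For part~\ref{IT_SaK-composition}, the case $\psi=\id_\UD$ is trivial, so assume $\psi\neq\id_\UD$. To exclude $\varphi\circ\psi=\id_\UD$, I observe that this would give $h_\varphi=h_\varphi\circ(\varphi\circ\psi)=h_\varphi+1+c_{\varphi,\psi}$, forcing $c_{\varphi,\psi}=-1\in\Real$; Proposition~\ref{Prop:firstpropertiescoefficient}\+\ref{firstpropereties3} would then make $\varphi$ a parabolic automorphism, but such maps act as pseudohyperbolic isometries and so have positive hyperbolic step, contradicting the hypothesis. For the zero-step assertion, I pass to $V:=V_\varphi(1/3)$ through a Riemann map $f\colon\UD\to V$: by Proposition~\ref{modelV_par}\+\ref{IT_modelV-zero-step}, Theorems~\ref{Thm:V-sets}\+\ref{IT_V-sets(4)} and~\ref{Thm:zero-hyperbolic-step}\+\ref{IT_THzero-univalent}, and Lemma~\ref{Le:V-sets}\+\ref{V-sets-fact1}, the conjugates $\mtilde\varphi,\mtilde\psi$ are univalent commuting self-maps of $\UD$, $\mtilde\varphi$ is of zero hyperbolic step, and $\mtilde{\varphi\circ\psi}=\mtilde\varphi\circ\mtilde\psi$. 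Univalence of $h_{\mtilde\varphi}$ together with Theorem~\ref{Thm:zero-hyperbolic-step}\+\ref{IT_THzero-SimLin} gives $\mtilde\psi=h_{\mtilde\varphi}^{-1}\circ(h_{\mtilde\varphi}+c)$ with $c=c_{\varphi,\psi}$, whence $h_{\mtilde\varphi}/(1+c)$ is a normalized univalent solution of Abel's equation for $\mtilde\varphi\circ\mtilde\psi$; by Theorem~\ref{Thm:model}\+\ref{IT_HM-para-0HS} together with the uniqueness of the canonical model, $\mtilde\varphi\circ\mtilde\psi$ will be of zero hyperbolic step once I verify $\bigcup_{n\ge0}\bigl(h_{\mtilde\varphi}(\UD)-n(1+c)\bigr)=\C$. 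The plan for this verification is as follows: given $w\in\C$, Remark~\ref{Re:almostasemigroup} supplies $N\in\N$ with $w+N\in h_{\mtilde\varphi}(\UD)$, and since $c$ lies in the stabilizer $\{\lambda\in\C:h_{\mtilde\varphi}(\UD)+\lambda\subset h_{\mtilde\varphi}(\UD)\}$ (because $\mtilde\psi$ is a self-map of $\UD$), iterating the $c$-translation $N$ times yields $w+N(1+c)=(w+N)+Nc\in h_{\mtilde\varphi}(\UD)$. Pulling back through~$f$ then gives the zero-step property of $\varphi\circ\psi$, and the explicit formula follows from~\ref{IT_SaK-both-zero-st} applied to the commuting pair $(\varphi,\varphi\circ\psi)$.

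For part~\ref{IT_SaK-real-SLC}, Theorem~\ref{TH_SLC(2formulas)}\+\ref{IT_SLC2} forces $c_1\neq 0$ and $c_{\varphi,\psi}=c_2/c_1\in\Real$; Proposition~\ref{Prop:firstpropertiescoefficient}\+\ref{firstpropereties1} excludes $c_{\varphi,\psi}=0$ (as $\psi$ is parabolic), and the automorphism/hyperbolic-step contradiction used in~\ref{IT_SaK-composition} excludes $c_{\varphi,\psi}<0$, leaving $c_{\varphi,\psi}>0$. Repeating the univalent-setting conjugation by~$f$ from~\ref{IT_SaK-composition}, the map $h_{\mtilde\varphi}/c_{\varphi,\psi}$ is a normalized univalent solution of Abel's equation for $\mtilde\psi$ and so is its Koenigs function; the zero-step property of $\mtilde\psi$ then reduces to $\bigcup_n(h_{\mtilde\varphi}(\UD)-nc_{\varphi,\psi})=\C$, which is immediate from Remark~\ref{Re:almostasemigroup} applied with $t:=nc_{\varphi,\psi}$ since $c_{\varphi,\psi}$ is real and positive. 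Consequently $\psi$ is of zero hyperbolic step and the formula follows from~\ref{IT_SaK-both-zero-st}. The principal obstacle in the whole argument is the covering step in~\ref{IT_SaK-composition}: when $c$ is genuinely non-real, Remark~\ref{Re:almostasemigroup} (designed only for real positive translations) does not apply directly to the translation by $n(1+c)$, and the resolution is the coupled decomposition $n(1+c)=n+nc$ combined with the fact that both $1$ and $c$ belong to the stabilizer $\mathcal A_{\mtilde\varphi}$.
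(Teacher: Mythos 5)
Your proof is correct throughout, but it takes genuinely different routes from the paper in all three parts, so a comparison is worthwhile.

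For~\ref{IT_SaK-both-zero-st}, you apply the uniqueness Theorem~\ref{Thm:uniqueness} directly to~$\psi$ with $h_1:=c^{-1}_{\varphi,\psi}h_\varphi$, verifying the disc-univalence hypothesis via Lemma~\ref{Le:V-sets}\+\ref{V-sets-fact4} (with $\varphi$ and~$\psi$ interchanged) and Theorem~\ref{Thm:V-sets}\+\ref{IT_V-sets(4)}; since both $V_\varphi(1/9)$ and $V_\psi(1/3)$ are isogonal at~$\tau$, they intersect, so a base point $z_0$ with $h_\psi'(z_0)\neq 0$ is available. The paper instead splits into the cases $c_{\varphi,\psi}\in\mathbb Q$ and $c_{\varphi,\psi}\notin\mathbb Q$, handling the first by the iterate identity $\psi^{\circ n}=\varphi^{\circ m}$ from Proposition~\ref{Prop:firstpropertiescoefficient} and the second via Theorem~\ref{TH_SLC(2formulas)}\+\ref{IT_SLC2} and Proposition~\ref{PR_SLC(cont)}. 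Your argument is unified and avoids the dichotomy entirely; the trade-off is that you recycle the same $V$-set machinery used elsewhere rather than reusing Proposition~\ref{PR_SLC(cont)}.

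For~\ref{IT_SaK-composition} and~\ref{IT_SaK-real-SLC}, you conjugate to the univalent setting and prove the zero-step property by exhibiting a holomorphic model with base space~$\C$, reducing each to showing that the stabilizer-translates cover~$\C$ (your ``coupled decomposition'' $n(1+c)=n+nc$ handles the non-real coefficient in~\ref{IT_SaK-composition}, and Remark~\ref{Re:almostasemigroup} handles the real positive coefficient in~\ref{IT_SaK-real-SLC}). The paper does something entirely different: for~\ref{IT_SaK-composition} it uses the one-line Schwarz--Pick estimate
\[
\hd_\D\big((\varphi\circ\psi)^{\circ(n+1)}(z_0),(\varphi\circ\psi)^{\circ n}(z_0)\big)
\leq \hd_\D\big(\varphi^{\circ n}(\varphi\circ\psi(z_0)),\varphi^{\circ n}(z_0)\big)\longrightarrow 0,
\]
which is considerably shorter and does not require constructing a model at all, while for~\ref{IT_SaK-real-SLC} it works with the hyperbolic distortion of $\mtilde\psi^{\circ n}$ and invokes the Appendix criterion (Proposition~\ref{Pro:step-derivative}). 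Your approach buys uniformity (the same ``base space is $\C$'' mechanism carries both parts) and sidesteps Proposition~\ref{Pro:step-derivative}, but at the cost of a noticeably longer argument in~\ref{IT_SaK-composition} where the paper's distance estimate is essentially immediate. One minor expository point: in~\ref{IT_SaK-real-SLC} you declare $h_{\mtilde\varphi}/c_{\varphi,\psi}$ to be ``the Koenigs function'' of $\mtilde\psi$ before verifying the covering condition; logically, the covering condition must come first, as it is what pins down the base space (and hence the hyperbolic-step type) of $\mtilde\psi$.
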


\begin{remark}
The hypothesis that \textit{both} self-maps are of zero hyperbolic step cannot be removed from statement~\ref{IT_SaK-both-zero-st} of the above theorem. Indeed, consider the Riemann map~$h:\UD\to\Omega$ of
$$
  \Omega:=\C\,\setminus\,\bigcup_{k\in\mathbb Z}\{x+ik\colon x\le-1\},
$$
normalized by $h(0)=0$, $h'(0)>0$. Then $\varphi:={h^{-1}\circ(h+1)}$ and $\psi:={h^{-1}\circ(h+i)}$ are commuting holomorphic self-maps of~$\UD$. Moreover, $(\C, h, z\mapsto z+1)$ is a holomorphic model of $\varphi$, so that it is parabolic of zero hyperbolic step with ${h_\varphi=h}$, and ${h_\varphi\circ\psi}={h_\varphi+i}$. However, $\psi$ is a parabolic automorphism of~$\UD$ and therefore, $h_\psi$ is a conformal map of~$\UD$ onto a half-plane and so the equality $i h_\psi=h_\varphi$ is not possible. Note that being a parabolic automorphism, $\psi$~is of positive parabolic step.
\end{remark}

\addtocontents{toc}{\SkipTocEntry}\subsection{Proofs} We start by establishing two auxiliary statements.

\begin{lemma}\label{lema_constante} Let $\Phi\in\Hol(\H_{\mathrm R})$ be a parabolic self-map of the right half-plane. For ${0<r<1}$, let
	$$
	V_\Phi(r):=\{w\in\H_{\mathrm R}: \phd_{\H_{\mathrm R}}(w,\Phi(w))<r\},
	$$
	where $\phd_{\H_{\mathrm R}}$ is the pseudohyperbolic distance in $\H_{\mathrm R}$. Then, for any $\xi\in V_\Phi(1/9)$ and any $w\in\D$, it holds that
	$$
	\xi+\frac15 w\Re\xi~\in~ V_\Phi(1/3).
	$$
\end{lemma}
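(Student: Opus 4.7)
The plan is to prove this by reducing it to the pseudohyperbolic analogue of Lemma~\ref{Le:V-sets}\+\ref{V-sets-fact3}, which works verbatim in any hyperbolic domain (here the right half-plane). Concretely, if I can show that $\phd_{\H_{\mathrm R}}(\xi,\eta)<1/9$ for $\eta:=\xi+\tfrac15 w\Re\xi$, then the triangle inequality together with the Schwarz\,--\,Pick contraction of $\Phi$ in the $\phd_{\H_{\mathrm R}}$-metric gives
$$
\phd_{\H_{\mathrm R}}(\eta,\Phi(\eta))\,\leq\,\phd_{\H_{\mathrm R}}(\eta,\Phi(\xi))+\phd_{\H_{\mathrm R}}(\Phi(\xi),\Phi(\eta))\,<\,(1/9+1/9)+1/9=1/3,
$$
which is exactly the assertion $\eta\in V_\Phi(1/3)$.

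The first thing to verify, so that the statement is meaningful, is that $\eta\in\H_{\mathrm R}$. Writing $x:=\Re\xi>0$, we have $\Re\eta = x\bigl(1+\tfrac15\Re w\bigr)$, and since $|w|<1$ this is strictly bigger than $\tfrac{4}{5}x>0$. So $\eta$ indeed lies in the right half-plane.

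The core step is the explicit pseudohyperbolic computation. Using the standard formula $\phd_{\H_{\mathrm R}}(z_1,z_2)=|z_1-z_2|/|z_1+\overline{z_2}|$, I compute
$$
\phd_{\H_{\mathrm R}}(\xi,\eta)=\frac{\tfrac15|w|\,x}{\bigl|2x+\tfrac15 \bar w\,x\bigr|}=\frac{|w|}{|10+\bar w|}\,\leq\,\frac{|w|}{10-|w|}.
$$
The function $t\mapsto t/(10-t)$ is increasing on $[0,1)$ and takes the value $1/9$ at $t=1$, so for $|w|<1$ we get the strict bound $\phd_{\H_{\mathrm R}}(\xi,\eta)<1/9$, as required. (The specific constant $1/5$ in the statement is precisely what makes the estimate $|10+\bar w|\ge 9$ yield exactly~$1/9$.)

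I do not anticipate any real obstacle here; the whole argument is an exercise in the pseudohyperbolic metric of~$\H_{\mathrm R}$. The only thing to be a bit careful about is the strictness of the inequality, which is why I kept $|w|<1$ rather than $|w|\leq 1$ in the bound; and the fact that Lemma~\ref{Le:V-sets}\+\ref{V-sets-fact2}\,--\,\ref{V-sets-fact3} was stated for~$\D$ but the proof (pure triangle inequality plus Schwarz\,--\,Pick) transfers without change to any hyperbolic simply connected domain, in particular to~$\H_{\mathrm R}$ with the self-map~$\Phi$.
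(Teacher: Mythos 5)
Your proof is correct and follows essentially the same path as the paper's: check that $\eta\in\H_{\mathrm R}$, reduce via the triangle inequality and Schwarz--Pick to estimating $\phd_{\H_{\mathrm R}}(\xi,\eta)$, and then compute that quantity explicitly and bound it by $1/9$. The only (cosmetic) differences are that the paper keeps the denominator as $|2+\overline{w}/5|$ rather than scaling to $|10+\overline w|$ and that it settles for the non-strict bound $\phd_{\H_{\mathrm R}}(\xi,\eta)\le 1/9$, which is already enough since $\phd_{\H_{\mathrm R}}(\xi,\Phi(\xi))<1/9$ is strict.
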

\begin{proof} Fix $\xi\in V_\Phi(1/9)$ and $w\in\D$ and denote $u:=\xi+\frac15 w\Re\xi$. First of all, we note that
	$$
	\Re u=(\Re\xi)\big(1+(1/5)\Re w\big)>0,
	$$
	thus $u\in\H_{\mathrm R}$. With the help of the triangle inequality and the Schwartz\,--\,Pick Lemma, we obtain
	\begin{eqnarray*}
	\phd_{\H_{\mathrm R}}(u,\Phi(u)) &\leq&
          \phd_{\H_{\mathrm R}}(u,\xi) + \phd_{\H_{\mathrm R}}(\xi,\Phi(\xi))
                                              + \phd_{\H_{\mathrm R}}(\Phi(\xi),\Phi(u))\\
          &\leq&
 	2\phd_{\H_{\mathrm R}}(u,\xi)+\phd_{\H_{\mathrm R}}(\xi,\Phi(\xi))<1/9+2\phd_{\H_{\mathrm R}}(u,\xi).
	\end{eqnarray*}
	Therefore, it remains to check that $\phd_{\H_{\mathrm R}}(u,\xi)\leq 1/9$, which is quite straightforward:
	\begin{equation*}
	\phd_{\H_{\mathrm R}}(u,\xi) =\left|\frac{\xi-u}{\xi+\overline{u}}\right|=\frac{(1/5)|w|\Re\xi}{\Re\xi|2+(1/5)\overline{w}|}\leq \frac{1/5}{2-1/5}=\frac19.\qedhere
	\end{equation*}	
\end{proof}
\medskip

\begin{proposition}\label{positivo2} 
Let $\varphi\in\Hol(\UD)$ be a parabolic self-map with Denjoy\,--\,Wolff point ${\tau\in\partial\D}$ and Koenigs function $h_{\varphi}$. Let $\psi\in\Zn(\varphi)\setminus\{\id_\D\}$.
Then
	$$
	\angle \lim_{z\to\tau}\dfrac{h_\varphi(\psi(z))-h_\varphi(z)}{h^\prime_\varphi(z)(\psi(z)-z)}=1.
	$$
\end{proposition}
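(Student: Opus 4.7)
The plan is to realize the target ratio as a remainder-free Taylor expansion of $h_\varphi$ along the Euclidean segment $[z,\psi(z)]$, and to show that the first-order approximation is asymptotically exact as $z\to\tau$ non-tangentially. Since $\psi\in\Zn(\varphi)\setminus\{\id_\UD\}$ and $\varphi$ is parabolic, Behan's theorem and Cowen's result recalled in Section~\ref{SS_commuting} guarantee that $\psi$ is parabolic with the same Denjoy\,--\,Wolff point~$\tau$ as $\varphi$, and in particular $\psi'(\tau)=1$. The Julia\,--\,Wolff\,--\,Carath\'eodory theorem then yields $\psi(z)-z=o(z-\tau)$ on non-tangential approach to~$\tau$, while the identity $1-\bar z\psi(z)=(1-|z|^2)+\bar z(z-\psi(z))$ together with $|z-\tau|\asymp 1-|z|$ gives $|1-\bar z\psi(z)|\asymp 1-|z|^2$. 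Dividing, $\delta(z):=\phd_\UD(z,\psi(z))\to 0$ as $z\to\tau$ non-tangentially.

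The geometric core uses the sets $V_\varphi(r)$, which are isogonal at~$\tau$ by Theorem~\ref{Thm:V-sets}\+\ref{IT_V-sets(3)}. For non-tangential $z$ sufficiently close to~$\tau$, we may assume $z\in V_\varphi(1/9)\cap V_\psi(1/9)$, so $\delta(z)<1/9$, and by Lemma~\ref{Le:V-sets}\+\ref{V-sets-fact3}, $\phdisk(z,1/9)\subset V_\varphi(1/3)$. The crucial observation is that $\phdisk(z,1/9)$ is a \emph{Euclidean} disc containing both $z$ and $\psi(z)$, hence contains the whole segment $[z,\psi(z)]$; and on $V_\varphi(1/3)$, $h_\varphi$ is univalent by Theorem~\ref{Thm:V-sets}\+\ref{IT_V-sets(4)}.

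Writing $h_\varphi(\psi(z))-h_\varphi(z)=(\psi(z)-z)\int_0^1 h'_\varphi(u_t)\,dt$ with $u_t:=z+t(\psi(z)-z)$, the target ratio becomes $\int_0^1 h'_\varphi(u_t)/h'_\varphi(z)\,dt$. To pass to the limit under the integral sign I would apply Koebe's distortion theorem to $h_\varphi\circ g_z$, where $g_z$ is the M\"obius automorphism of~$\UD$ sending $0\mapsto z$: since $g_z$ maps $\{|\zeta|<1/9\}$ conformally onto $\phdisk(z,1/9)$, the composition is univalent on the fixed disc $\{|\zeta|<1/9\}$. After normalization, the Koebe distortion estimates yield $(h_\varphi\circ g_z)'(\zeta)/(h_\varphi\circ g_z)'(0)\to 1$ uniformly for $|\zeta|\le\delta(z)$ as $\delta(z)\to 0$; combined with $g_z'(\zeta)/g_z'(0)=(1+\bar z\zeta)^{-2}\to 1$ uniformly in $z$ as $|\zeta|\to 0$ (because $|\bar z\zeta|\le|\zeta|$), this gives $h'_\varphi(u_t)/h'_\varphi(z)\to 1$ uniformly in $t\in[0,1]$, and the proposition follows.

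The main obstacle is that, unlike in the univalent case treated in~\cite{CDG-Centralizer}, $h_\varphi$ is univalent only on the proper subset $V_\varphi(1/3)$; verifying that the \emph{entire} Euclidean segment $[z,\psi(z)]$, not merely its endpoints, lies inside a round subregion of this univalence domain is the decisive technical point, and it hinges on the fact that pseudohyperbolic discs coincide with suitable Euclidean discs.
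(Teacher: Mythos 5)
Your proof is correct and rests on the same fundamental mechanism as the paper's: exploit the univalence of $h_\varphi$ on the sets $V_\varphi(1/3)$, scale to obtain a family of normalized univalent functions on a fixed disc, and invoke Koebe-type distortion to conclude that the first-order approximation is asymptotically exact. The main presentational differences are these. You work directly in the disc with the automorphisms $g_z$, whereas the paper transfers to the half-plane via the Cayley map and uses the explicit affine rescalings $\xi\mapsto\xi+\tfrac15 w\,\Re\xi$, which it proves (Lemma~\ref{lema_constante}) stay inside $V_\Phi(1/3)$. You also use the integral representation $h_\varphi(\psi(z))-h_\varphi(z)=(\psi(z)-z)\int_0^1 h_\varphi'(u_t)\,dt$ and then control the derivative ratio $h_\varphi'(u_t)/h_\varphi'(z)$; this requires verifying that the \emph{entire} Euclidean segment lies in the univalence region, which, as you note, works because pseudohyperbolic discs are Euclidean discs, hence convex. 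The paper avoids this by applying the normalized map $g_\xi$ directly to the point $w_n=(\Psi(\xi_n)-\xi_n)/(c\Re\xi_n)$ and using the explicit bound $|g_\xi(w)-w|\le 54|w|^2$, so no segment argument is needed. Both routes are sound; the paper's is slightly more quantitative (explicit constants via Cauchy's formula and the Growth Theorem), while your appeal to ``Koebe distortion estimates yield $\ldots\to1$ uniformly'' is correct in spirit but tacitly also requires a rotation-type bound on $\arg f'$ (not only the modulus bound), or equivalently a compactness/normality argument for the class $S$, to get the full complex convergence $f'(\zeta)\to1$ uniformly over the class. That is a detail you would want to spell out in a final write-up, but it is not a gap in the idea.
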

\begin{proof} Consider $\Psi:=C\circ\psi\circ C^{-1}$, where $C$ is the usual Cayley map with a pole at~$\tau$ and mapping~$\UD$ onto the right half-plane~$\H_{\mathrm R}$; i.e. ${C(z)=\frac{\tau+z}{\tau-z}}$, ${z\in\D}$. {By results of Behan and Cowen, see Section~\ref{SS_commuting}, $\psi$ is a parabolic self-map with Denjoy\,--\,Wolff point at~$\tau$. Therefore, $\Psi$ and $\Phi:={C\circ\varphi\circ C^{-1}}$ are parabolic self-maps of~$\H_{\mathrm R}$ with Denjoy\,--\,Wolff point at~$\infty$. In particular,
	\begin{equation}\label{Eq:positivo2}
		\angle\lim_{w\to\infty}\frac{\Psi(w)}w=1.
	\end{equation}

Fix for a while some ${z\in\UD}$. Denote ${\xi:=C(z)}$ and let $H:=h_\varphi\circ C^{-1}$}. Then
\begin{align*}
&h_\varphi(\psi(z))-h_\varphi(z)=H(\Psi(\xi))-H(\xi),\\[1ex]
&h_\varphi^\prime(z)=H^\prime\big(C(z)\big)C^\prime(z)=H^\prime(\xi)\frac{2\tau}{\big(\tau-C^{-1}(\xi)\big)^2}= H^\prime(\xi)\frac{(1+\xi)^2}{2\tau},\quad\text{and}\\[1ex]
&\psi(z)-z=C^{-1}(\Psi(\xi))-C^{-1}(\xi)= 2\tau\frac{\Psi(\xi)-\xi}{(1+\xi)(1+\Psi(\xi))}.
\end{align*}

\medskip\noindent Bearing in mind~\eqref{Eq:positivo2}, it is therefore sufficient to prove that
\begin{equation}\label{EQ_half-plane-version}
 \angle \lim_{\xi\to\infty}\frac{H(\Psi(\xi))-H(\xi)}{H^\prime(\xi)(\Psi(\xi)-\xi)}=1.
\end{equation}

To this end, set $c:=1/5$ and consider the family of functions
	$$
	g_\xi(w):=\frac{H(\xi+cw\Re\xi)-H(\xi)}{c\,H^\prime(\xi)\Re\xi},\quad  w\in \D,\quad \xi\in V_\Phi(1/9).
	$$
By Theorem~\ref{Thm:V-sets}\+\ref{IT_V-sets(4)}, we know that $h_\varphi$ is univalent in $V_\varphi(1/3)$. Hence, $H=h_\varphi\circ C^{-1}$ is univalent in
	$$
C\big(V_\varphi(1/3)\big)=V_\Phi(1/3), \quad V_\Phi(r):=\big\{w\in\H_{\mathrm R}:\phd_{\H_{\mathrm R}}(w,\Phi(w))<r\big\}.
	$$
	Therefore, using Lemma~\ref{lema_constante}, we may deduce that for every~${\xi\in V_\Phi(1/9)}$, the function $g_\xi$ is well-defined, holomorphic and moreover, univalent in~$\UD$. Furthermore, $g_\xi(0)={g_\xi^\prime(0)-1=0}$.

Now, we claim that
	\begin{equation}\label{EQ_claim}
		|g_\xi(w)-w|\leq 54|w|^2\quad  \text{when~$~|w|<1/2$.}
	\end{equation}
Assume for a moment that~\eqref{EQ_claim} holds and consider
	an arbitrary sequence $(\xi_n)\subset\H_{\mathrm R}$ converging non-tangentially to $\infty$. Denote ${w_n:=\big(\Psi(\xi_n)-\xi_n\big)/(c\Re \xi_n)}$. By \eqref{Eq:positivo2},
	\begin{equation}\label{EQ_to0}
		|w_n|=\left|\frac{\Psi(\xi_n)}{\xi_n}-1\right|\,\cdot\, c^{-1}\cdot\,\left|\frac{\xi_n}
		{\Re \xi_n}\right|
		~\longrightarrow~0\quad\text{as~$~n\to+\infty$}.
	\end{equation}
	In particular, omitting a finite number of terms, we may assume that  ${|w_{n}|<1/2}$ for all ${n\in\Natural}$. Moreover, recall that $\Psi$ is non-elliptic. Hence, ${w_{n}\neq0}$ for all ${n\in\Natural}$. Finally, by Theorem~\ref{Thm:V-sets}\+\ref{IT_V-sets(3)} the domain~$V_\varphi(1/9)$ is isogonal at~$\tau$. As a consequence, ${\xi_n\in V_\Phi(1/9)}$ for all~$n$ large enough. Thus,
	using~\eqref{EQ_claim} and~\eqref{EQ_to0}, we may conclude that
	$$ \lim_{n\to+\infty}
\frac{H\big(\Psi(\xi_n)\big)-H(\xi_n)}{H^\prime(\xi_n)\big(\Psi(\xi_n)-\xi_n\big)}
=\lim_{n\to+\infty}\frac{g_{\xi_n}(w_n)}{w_n}=1.
	$$

	It remains to prove~\eqref{EQ_claim}. Fix an arbitrary~$w$ with $|w|<1/2$. Using Cauchy's integral formula with the contour $\Gamma:=\{z:|z|=2/3\}$ oriented counterclockwise and the Growth Theorem, see e.g. \cite[Theorem~2.6 on p.\,33]{Duren}, we find that
	\begin{align*}
		\big|g_{\xi}(w)-w\big| &=\big|g_{\xi}(w)-g_{\xi}(0)-g_{\xi}^{\prime }(0)w\big|=\displaystyle\left\vert \frac{1}{2\pi i}\int_\Gamma g_{\xi}(z)\Big( \frac{1
		}{z-w}-\frac{1}{z }-\frac{w}{z^{2}}\Big) \di z \right\vert  \\
		&=\displaystyle\left\vert \frac{1}{2\pi i}\int_\Gamma g_{\xi}(z)\frac{w^{2}}{%
			z^{2}(z-w)}\di z \right\vert  \leq \displaystyle\frac{1}{2\pi }\int_\Gamma\frac{|z|}{(1-|z|)^{2}}\,
		\frac{|w|^{2}}{|z|^{2}\,|z-w|}\,|\di z |\leq 54|w|^{2}. \qedhere
	\end{align*}	
\end{proof}\medskip

\begin{proof}[\proofof{Theorem~\ref{TH_SLC(2formulas)}}]
\color{black} Recall that by Theorem~\ref{Thm:zero-hyperbolic-step}\+\ref{IT_THzero-SimLin}, ${h_\varphi\circ\psi-h_\varphi}$ is a constant, and that by the very definition, this constant is the simultaneous linearization coefficient~$c_{\varphi,\psi}$. In other words,
\begin{equation}\label{EQ_c-psi-phi}
c_{\varphi,\psi}=h_\varphi(\psi (z))-h_\varphi (z)\quad\text{ for all $~{z\in \UD}$}.
\end{equation}
Throughout the proof we suppose that ${\psi\neq\id_\UD}$, because for ${\psi=\id_\UD}$ both statements of Theorem~\ref{TH_SLC(2formulas)} are trivial.

\StepP{\ref{IT_SLC1}}
By \eqref{EQ_c-psi-phi} and Proposition~\ref{positivo2},
	$$
	\angle \lim_{z\to\tau}\dfrac{c_{\varphi,\psi}}{h^\prime_\varphi(z)(\psi(z)-z)}=\angle \lim_{z\to\tau}\dfrac{h_\varphi(\psi (z))-h_\varphi (z)}{h^\prime_\varphi(z)(\psi(z)-z)}=1,
	$$
	or equivalently
    \begin{equation}\label{EQ_another-formula-for-SLC}
      c_{\varphi,\psi} =\angle \lim_{z\to\tau}h_\varphi^\prime(z)\big(\psi(z)-z\big).
    \end{equation}

By the same argument applied to~$\varphi$ instead of~$\psi$, we have
\begin{equation}\label{EQ_another-formula-bis}
  1=c_{\varphi,\varphi}=\angle \lim_{z\to\tau}h_\varphi^\prime(z)\big(\varphi(z)-z\big).
\end{equation}
Combining~\eqref{EQ_another-formula-for-SLC} with~\eqref{EQ_another-formula-bis} immediately yields the desired formula
$$
  c_{\varphi,\psi}=\angle \lim_{z\to\tau}\frac{\psi(z)-z}{\varphi(z)-z}.
$$

\StepP{\ref{IT_SLC2}} As mentioned above, we may assume that $\psi \neq \id_\UD$. By the hypothesis,
\begin{equation}\label{EQ_double}
	h\circ \varphi=h+c_1,\qquad h\circ \psi=h+c_2,
\end{equation}
	for some holomorphic function $h:\UD\to\C$ and  some $(c_1,c_2)\in\C^2\setminus\{(0,0)\}$. We have to show that  ${c_1\neq0}$ and ${c_2/c_1=c_{\varphi,\psi}}$.

Note that since $\varphi\not\in\Aut$  and  $\psi\neq\id_\UD$, in view of Proposition~\ref{Prop:firstpropertiescoefficient}, we have $c_{\varphi,\psi}\in{\Complex\setminus(-\infty,0]}$.
In particular, if $c_{\varphi,\psi}\in\mathbb Q$, then ${c_{\varphi,\psi}=m/n}$ for some ${m,n\in\Natural}$, and consequently, by Proposition~\ref{Prop:firstpropertiescoefficient}\+\ref{firstpropereties2}, we have ${\psi^{\circ n}=\varphi^{\circ m}}$.  Using~\eqref{EQ_double}, we therefore obtain
$$
  h+mc_1=h\circ\varphi^{\circ m}=h\circ\psi^{\circ n}=h+nc_2.
$$
Thus, $c_2=mc_1/n=c_{\varphi,\psi}c_1$. This completes the proof in case ${c_{\varphi,\psi}\in\mathbb Q}$.

Suppose now that $c_{\varphi,\psi}\not\in\mathbb Q$. The first equality in~\eqref{EQ_double} implies that $(S',h,z\mapsto z+c_1)$, where $S':=\bigcup_{n\in\Natural} \big(h(\UD)-nc_1\big)$, is a holomorphic semimodel for~$\varphi$. Therefore, appealing to Lemma~\ref{Le:morphism}, we see that ${h=\beta\circ h_\varphi}$ for some~$\beta\in\Hol(\C)$. Substitute this representation for~$h$ in~\eqref{EQ_double} and recall the relations ${h_\varphi\circ\varphi=h_\varphi+1}$, ${h_\varphi\circ\psi=h_\varphi+c_{\varphi,\psi}}$. In this way, using the identity principle for holomorphic functions,  we easily obtain
\begin{equation}\label{EQ-beta-period}
\beta(w+1)=\beta(w)+c_1,\quad \beta(w+c_{\varphi,\psi})=\beta(w)+c_2\quad\text{for all~$~w\in\C$.}
\end{equation}
In particular, it follows that the derivative of~$\beta$ is periodic with two periods ${\omega_1=1}$ and ${\omega_2=c_{\varphi,\psi}}$. Since ${\omega_2/\omega_1}$ is not a rational real number, it follows that $\beta'$ is either constant or it is a non-constant function with at least two primitive periods. By basic results in the theory of elliptic functions, see e.g. \cite[pp.~2 and~8]{Akh}, a non-constant entire function can have at most one primitive period. Consequently, $\beta'$ is constant. Thus, $\beta(w)={aw+b}$ for all ${w\in\C}$ and some ${a,b\in\C}$. Recalling~\eqref{EQ-beta-period}, we have ${a=c_1}$ and ${ac_{\varphi,\psi}=c_2}$, which immediately yields the desired conclusion.
\end{proof}

\begin{proof}[\proofof{Theorem~\ref{TH_step-and-Koenigs}\+\ref{IT_SaK-both-zero-st}}]
Let $\varphi,\psi\in\Hol(\UD)$ be two parabolic self-maps of zero hyperbolic step. Suppose that ${\varphi\circ\psi=\psi\circ\varphi}$. We have to show that ${h_\psi=c_{\varphi,\psi}^{-1}h_\varphi}$.

By Theorem~\ref{Thm:zero-hyperbolic-step}\+\ref{IT_THzero-SimLin},
\begin{equation}\label{EQ_sys}
  h_\varphi\circ\psi = h_\varphi + c_{\varphi,\psi},\qquad
  h_\varphi\circ\varphi = h_\varphi + 1.
\end{equation}
Consider two cases.

\StepC1{$c_{\varphi,\psi}\not\in\mathbb Q$} Taking into account~\eqref{EQ_sys}, by Theorem~\ref{TH_SLC(2formulas)}\+\ref{IT_SLC2} applied with $\varphi$ and $\psi$ interchanged, we have $c_{\psi,\varphi}={1/c_{\varphi,\psi}\not\in\mathbb Q}$. Therefore, by Proposition~\ref{PR_SLC(cont)} for ${h:=h_\varphi}$, ${(c_1,c_2)}:={(c_{\varphi,\psi},1)}$ and for $\varphi$ and $\psi$ again interchanged, we have ${h_\varphi-h_\varphi(0)=c_{\varphi,\psi}h_\psi}.$ It remains to recall that according to the normalization of the Koenigs function adopted in this paper,  ${h_\varphi(0)=0}$; see Remark~\ref{RM_normalization}.

\StepC2{$c_{\varphi,\psi}=m/n\in\mathbb Q$} A parabolic self-map of zero hyperbolic step cannot be an automorphism of~$\UD$. Hence, recalling~\eqref{EQ_sys} and appealing to Proposition~\ref{Prop:firstpropertiescoefficient}, we conclude that ${c_{\varphi,\psi}>0}$ and that $f:={\psi^{\circ n}=\varphi^{\circ m}}$. From the definition (and uniqueness) of the Koenigs function it then easily follows that ${(1/n)h_\psi=h_f=(1/m)h_{\varphi}}$. Thus, $h_\varphi=(m/n)h_\psi$ as desired.
\end{proof}

\begin{proof}[\proofof{Theorem~\ref{TH_step-and-Koenigs}\+\ref{IT_SaK-composition}}]
Let $\varphi$ and $\psi$ be commuting parabolic self-maps of~$\UD$. We have to show that if~$\varphi$~is of zero hyperbolic step, then so is $\varphi\circ\psi$ and that ${\varphi\circ\psi\neq\id_\UD}$. The latter statement follows immediately from the fact that $\varphi$ is not an automorphism of~$\UD$.  Clearly, ${\varphi\circ\psi}$ commutes with~$\varphi$.
Furthermore, fixing ${z_0\in\D}$ and applying the Schwarz\,--\,Pick Lemma to the self-map $\psi^{\circ n}$, we have
\begin{eqnarray}
	\hd_\D\big((\varphi\circ\psi)^{\circ(n+1)}(z_0),(\varphi\circ\psi)^{\circ n}(z_0)\big)\notag
      &=&
      \hd_\D\big(\psi^{\circ n}(\varphi^{\circ n}\circ\varphi\circ\psi(z_0)),
                                    \psi^{\circ n}(\varphi^{\circ n}(z_0))\big)\\
      &\leq&
	\hd_\D\big(\varphi^{\circ n}(\varphi\circ\psi(z_0)),\varphi^{\circ n}(z_0)\big)\qquad\text{for all~$~n\in\Natural$.}\notag
\end{eqnarray}
	The latter quantity tends to zero as $n\to+\infty$ because $\varphi$ is of zero hyperbolic step, see e.g. \cite[Corollary~4.6.9\+(iv) on p.\,246]{Abate2}. Thus, ${\varphi\circ\psi}$ is of zero hyperbolic step, as desired.
\end{proof}

\begin{proof}[\proofof{Theorem~\ref{TH_step-and-Koenigs}\+\ref{IT_SaK-real-SLC}}]
 Let $\varphi,\psi\in\Hol(\UD)$ be two commuting parabolic self-maps. Suppose that $\varphi$ is of zero hyperbolic step and that there exists $h\in\Hol(\UD,\C)$ such that
\begin{equation}\label{EQ_sysh}
  h\circ\varphi = h + c_1,\qquad
  h\circ\psi = h + c_2,
\end{equation} for some $(c_1,c_2)\in\Real^2\setminus\{(0,0)\}$. We have to show that under these hypotheses, $\psi$ is of zero hyperbolic step.

By Theorem~\ref{TH_SLC(2formulas)}\+\ref{IT_SLC2},  ${c_{\varphi,\psi}\in\Real}$. Moreover, Proposition~\ref{Prop:firstpropertiescoefficient} allows us to conclude that ${c_{\varphi,\psi}>0}$.
By Theorem~\ref{Thm:V-sets}, the set $V:=V_\varphi(1/3)$ is a simply connected $\varphi$-absorbing domain. Moreover, ${\psi(V)\subset V}$ by Lemma~\ref{Le:V-sets}\+\ref{V-sets-fact1}. Consider any conformal map $f$ of~$\UD$ onto~$V$. According to Theorem~\ref{Thm:zero-hyperbolic-step}\+\ref{IT_THzero-SimLin}, ${h_\varphi\circ\psi}={h_\varphi+c_{\varphi,\psi}}$. It follows that
\begin{equation}\label{EQ_h-star-psi-tilde}
h_*\circ\mtilde\psi^{\circ n}=h_*+nc_{\varphi,\psi}\quad\text{for all~$~n\in\Natural$},
\end{equation}
where $h_*:=h_\varphi\circ f$ and $\mtilde\psi\in\Hol(\UD)$ is defined by $\mtilde\psi:={f^{-1}\circ\psi\circ f}$.
To prove that $\psi$ is of zero hyperbolic step, it is sufficient to check that~$\mtilde\psi$ is of zero hyperbolic step. Indeed, fixing a point ${\zeta_0\in\UD}$ and denoting $z_0:=f(\zeta_0)$, we have
$$
  \hd_\UD\big(\psi^{\circ(n+1)}(z_0),\psi^{\circ n}(z_0)\big)\le \hd_V\big(\psi^{\circ(n+1)}(z_0),\psi^{\circ n}(z_0)\big)=
  \hd_\UD\big(\mtilde\psi^{\circ(n+1)}(\zeta_0),\mtilde\psi^{\circ n}(\zeta_0)\big),
$$
which would force $\hd_\UD\big(\psi^{\circ(n+1)}(z_0),\psi^{\circ n}(z_0)\big)\to0$ as ${n\to+\infty}$ if we are able to show that $\mtilde\psi$ is of zero hyperbolic step.

Now let $\zeta_0\in\UD$. By Theorem~\ref{Thm:V-sets}\+\ref{IT_V-sets(4)}, $h_\varphi$ is univalent in~$V$. Hence, $h_*$ is univalent in~$\UD$. Taking this into account and using~\eqref{EQ_h-star-psi-tilde}, we obtain
$$
\mathrm{D}_h\mtilde\psi^{\circ n}(\zeta_0):=\frac{1-|\zeta_0|^2}{1-|\mtilde\psi^{\circ n}(\zeta_0)|^2}\,
  \big|\big(\mtilde\psi^{\circ n}\big)'(\zeta_0)\big|\,=\,\frac{H(\zeta_0)}{H(\mtilde\psi^{\circ n}(\zeta_0))}\neq0
$$
for all $n\in\Natural$, where $H(\zeta):=|h'_*(\zeta)|(1-|\zeta|^2)$, ${\zeta\in\UD}$. Since $\varphi$ is of zero hyperbolic step, the base space of the canonical holomorphic model for~$\varphi$ is the whole complex plane~$\C$. Therefore, recalling that $c_{\varphi,\psi}>0$  we conclude, with the help of Remark~\ref{Re:almostasemigroup}, that for any ${R>0}$ there exists $n_0=n_0(R)$ such that for all $n\ge n_0$,
$$
  K_n:=\big\{w:|w-h_*\big(\mtilde\psi^{\circ n}(\zeta_0)\big)|\le R\big\}
      \,=\,\big\{w:|w-h_*(\zeta_0)|\le R\big\} + n c_{\varphi,\psi}~\subset~h_\varphi(V)=h_*(\UD).
$$
Since $h_*$ is univalent, it follows (see e.g. \cite[Corollary~1.4]{Pommerenke:BB}) that $H(\mtilde\psi^{\circ n}(\zeta_0))> R$ for all ${n\in\Natural}$ large enough. With $R>0$ being in this argument arbitrary, it follows that ${H(\mtilde\psi^{\circ n}(\zeta_0))\to+\infty}$ and hence, ${\mathrm{D}_h\mtilde\psi^{\circ n}(\zeta_0)\to0}$ as ${n\to+\infty}$. Thus, Proposition~\ref{Pro:step-derivative}, proved in the Appendix, implies that $\mtilde\psi$ is of zero hyperbolic step, as desired.
\end{proof}

\section{Appendix: the hyperbolic step and derivatives of the iterates}\label{S_APP}
Below we give a necessary and sufficient condition for a non-elliptic self-map to be of zero hyperbolic step, which we used in the proof of Theorem~\ref{TH_step-and-Koenigs}\+\ref{IT_SaK-real-SLC} and which would be probably also useful when one tries to apply the results established in the previous sections.
On the one hand, this condition does not seem to be unexpected for specialists, but on the other hand, we could not find it in the literature. By this reason, we provide a detailed proof.
\begin{proposition}\label{Pro:step-derivative}
Let $\varphi\in\Hol(\UD)$ be a non-elliptic self-map. Then the following conditions are equivalent:
\begin{ourlist}
  \item\label{IT_step-derivative:z.h.st.}
      $\varphi$ is parabolic of zero hyperbolic step;
  \item\label{IT_step-derivative:exists-z}
      there exists $z_0\in \D$  with $(\varphi^{\circ n})'(z_0)\neq 0$ for all ${n\in \N}$  such that
      \begin{equation}\label{Eq:step-derivative6}
        \lim_{n\to+\infty}\frac{(\varphi^{\circ n})'(z_0)}{1-|\varphi^{\circ n}(z_0)|^{2}}=0;
      \end{equation}
  \item\label{IT_step-derivative:all-z}
     equality~\eqref{Eq:step-derivative6} holds for all~$z_0\in\UD$.
\end{ourlist}
\end{proposition}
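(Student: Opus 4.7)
The quantity in~\eqref{Eq:step-derivative6} is, up to the positive factor $1-|z_0|^2$ and modulus, the hyperbolic derivative of the iterate $\varphi^{\circ n}$ at $z_0$, so all three conditions amount to saying that these hyperbolic derivatives tend to zero as $n\to+\infty$. The plan is to run the cycle~\ref{IT_step-derivative:z.h.st.}~$\Rightarrow$~\ref{IT_step-derivative:all-z}~$\Rightarrow$~\ref{IT_step-derivative:exists-z}~$\Rightarrow$~\ref{IT_step-derivative:z.h.st.}. The middle implication is immediate: since $V_\varphi(1/3)$ is $\varphi$-invariant and $\varphi$ is univalent on it by Theorem~\ref{Thm:V-sets}\+\ref{IT_V-sets(4)}, every $z_0\in V_\varphi(1/3)$ satisfies $(\varphi^{\circ n})'(z_0)\neq 0$ for all $n\in\Natural$, so~\ref{IT_step-derivative:all-z} applied at such a point gives~\ref{IT_step-derivative:exists-z}.

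For~\ref{IT_step-derivative:z.h.st.}~$\Rightarrow$~\ref{IT_step-derivative:all-z}, fix $z_0\in\UD$ and set $z_n:=\varphi^{\circ n}(z_0)$. By Theorem~\ref{Thm:V-sets}\+\ref{IT_V-sets(5)}, $V:=V_\varphi(1/3)$ is $\varphi$-absorbing, so via the chain rule I may reduce to the case $z_0\in V$ (if a critical iterate appears along the way, the quantity vanishes eventually). On $V$ the Koenigs function $h_\varphi$ is univalent, so differentiating $h_\varphi\circ\varphi^{\circ n}=h_\varphi+n$ yields $|(\varphi^{\circ n})'(z_0)|=|h_\varphi'(z_0)|/|h_\varphi'(z_n)|$; hence it suffices to show that $|h_\varphi'(z_n)|(1-|z_n|^2)\to+\infty$. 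Let $f:\UD\to V$ be a conformal map, set $\zeta_n:=f^{-1}(z_n)$ and $H:=h_\varphi\circ f$. By Proposition~\ref{modelV_par}\+\ref{IT_modelV-zero-step}, $H$ is univalent on $\UD$ with image $h_\varphi(V)\subset\C$, and by Remark~\ref{Re:almostasemigroup}, $H(\UD)$ contains a Euclidean disc of any prescribed radius around $H(\zeta_n)=h_\varphi(z_0)+n$ for all $n$ large enough. Koebe's $1/4$-theorem then forces $(1-|\zeta_n|^2)|H'(\zeta_n)|\to+\infty$, while the Schwarz\,--\,Pick estimate $(1-|\zeta_n|^2)|f'(\zeta_n)|\leq 1-|z_n|^2$ applied to the self-map $f$ of~$\UD$ gives $|h_\varphi'(z_n)|(1-|z_n|^2)\geq(1-|\zeta_n|^2)|H'(\zeta_n)|\to+\infty$, as desired.

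For~\ref{IT_step-derivative:exists-z}~$\Rightarrow$~\ref{IT_step-derivative:z.h.st.} I would argue by contraposition. Assume $\varphi$ is hyperbolic or parabolic of positive hyperbolic step; then by Theorem~\ref{Thm:model}, the base space $S$ of the canonical model $(S,h,z\mapsto z+1)$ is either a bounded horizontal strip or a half-plane, in either case a hyperbolic Riemann surface whose density $\lambda_S$ is invariant under real translations. Thus $\lambda_S(h(z_n))=\lambda_S(h(z_0)+n)=\lambda_S(h(z_0))>0$ is a positive constant along the orbit. The main delicate point---and what I anticipate as the principal obstacle---is to rule out the possibility $h'(z_0)=0$: if $h'(z_0)=0$, then differentiating $h\circ\varphi=h+1$ and using $\varphi'(z_k)\neq 0$ for every $k$ (which is forced by the hypothesis $(\varphi^{\circ n})'(z_0)\neq 0$), one obtains inductively $h'(z_n)=0$ for all $n$; but by definition of a holomorphic model there is a $\varphi$-absorbing domain $V'$ on which $h$ is univalent, and since the orbit eventually enters $V'$ one has $h'(z_n)\neq 0$ for $n$ large, a contradiction. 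Hence $h'(z_0)\neq 0$, and differentiating $h\circ\varphi^{\circ n}=h+n$ yields $|(\varphi^{\circ n})'(z_0)|/(1-|z_n|^2)=|h'(z_0)|/(|h'(z_n)|(1-|z_n|^2))$. The Schwarz\,--\,Pick inequality $|h'(z_n)|(1-|z_n|^2)\leq 2/\lambda_S(h(z_n))$ then produces a strictly positive lower bound on the expression in~\eqref{Eq:step-derivative6}, contradicting~\ref{IT_step-derivative:exists-z}.
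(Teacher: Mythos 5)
Your proof is correct, but it takes a genuinely different route from the paper's. The paper avoids the Koenigs function entirely: it introduces the time-dependent change of variables $L_n(\zeta)=\frac{z_n-\zeta}{1-\overline{z_n}\zeta}$, rewrites the iterates as $\varphi^{\circ n}=L_n\circ g_n\circ\cdots\circ g_1\circ L_0^{-1}$ with $g_k(0)=0$, and then proves both nontrivial implications by elementary Schwarz--Pick estimates on the maps $G_{n,m}=g_m\circ\cdots\circ g_{n+1}$ in~$\UD$. In particular, for \ref{IT_step-derivative:exists-z}~$\Rightarrow$~\ref{IT_step-derivative:z.h.st.} the paper shows directly that $\ell:=\lim|\zeta_n|$ satisfies $\ell\leq\ell^2$, hence $\ell=0$, with no case split on the type of~$\varphi$. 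Your argument instead routes both implications through the canonical model: for \ref{IT_step-derivative:z.h.st.}~$\Rightarrow$~\ref{IT_step-derivative:all-z} you use absorption into $V_\varphi(1/3)$, univalence of the Koenigs function there, the asymptotic ``fattening'' of the Koenigs domain (Remark~\ref{Re:almostasemigroup}), and Koebe distortion; for the converse you argue by contraposition using the translation-invariance of the hyperbolic density of the base space (strip or half-plane) together with Schwarz--Pick for $h:\UD\to S$. Both are sound. The paper's argument is more self-contained and does not presuppose any canonical-model theory, so it fits better as an appendix; yours is conceptually tighter once that theory is granted, since it makes the relevant quantity visibly equal (up to a bounded factor) to $1/\bigl(|h'(z_n)|(1-|z_n|^2)\bigr)$ and reads the answer off the geometry of the base space. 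You also correctly anticipated the one delicate point in the contrapositive direction, namely ruling out $h'(z_0)=0$, and your inductive argument with the absorbing domain handles it properly.

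One small logical wrinkle: your proof of \ref{IT_step-derivative:all-z}~$\Rightarrow$~\ref{IT_step-derivative:exists-z} invokes $V_\varphi(1/3)$, which is only defined for parabolic~$\varphi$, whereas this implication should hold unconditionally for a non-elliptic~$\varphi$. The fix is immediate and doesn't require the $V$-sets at all: since $\varphi^{\circ n}$ is non-constant for every~$n$, the set $\bigcup_{n}\{(\varphi^{\circ n})'=0\}$ is a countable union of discrete sets, hence countable, so a point~$z_0$ with $(\varphi^{\circ n})'(z_0)\neq0$ for all $n$ always exists. With that replacement the cycle closes cleanly regardless of the type of~$\varphi$.
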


\begin{remark}
 It is worth mentioning, even though it is just a trivial remark,  that condition~\eqref{Eq:step-derivative6} can be stated in terms of the so-called \emph{hyperbolic distortion} $\mathrm D_h\varphi(z):=\lambda_\UD(\varphi(z))|\varphi'(z)|/\lambda(z)$, where $\lambda(z):=1/(1-|z|^2)$ is the density of the hyperbolic metric in~$\UD$. Clearly, \eqref{Eq:step-derivative6} is equivalent to ${\mathrm D_h\varphi^{\circ n}(z_0)\to0}$ as~${n\to+\infty}$. Properties of the hyperbolic distortion seem to be deeply linked to the dynamical behaviour of a holomorphic self-map, see e.g. \cite{GuKouMouRoth}.
\end{remark}

\begin{remark}
An analogue of Proposition~\ref{Pro:step-derivative} is trivially true for elliptic self-maps. More precisely, if
${\varphi\in\Hol(\UD,\UD)\setminus\Aut}$ is elliptic, then it is automatically of zero hyperbolic step, and \eqref{Eq:step-derivative6} holds for all~$z_0\in\UD$ because $\varphi^{\circ n}\to\const\in\UD$ locally uniformly in~$\UD$ as ${n\to+\infty}$. In contrast, if ${\varphi\in\Aut}$, then  ${\mathrm D_h\varphi^{\circ n}(z)=1}$ for all ${n\in\Natural}$ and all~$z\in\UD$, and as a result the limit in~\eqref{Eq:step-derivative6} is strictly positive for all~${z_0\in\UD}$.
\end{remark}

\begin{proof}[\proofof{Proposition~\ref{Pro:step-derivative}}]
If $\varphi\in\Aut$, then the statement of the proposition holds trivially because none of the conditions \ref{IT_step-derivative:z.h.st.}\,--\,\ref{IT_step-derivative:all-z} is satisfied for automorphisms.

So for the rest of the proof we suppose that ${\varphi\not\in\Aut}$.
Clearly, \ref{IT_step-derivative:all-z}$\,\Rightarrow\,$\ref{IT_step-derivative:exists-z}.

Fix an arbitrary point~${z_0\in\UD}$. If ${(\varphi^{\circ n})'(z_0)=0}$ for some ${n\in \N}$, then~\eqref{Eq:step-derivative6} holds trivially. So we assume that ${(\varphi^{\circ n})'(z_0)\neq0}$ for all ${n\in \N}$. In order to make the proof of the implications \hbox{\ref{IT_step-derivative:z.h.st.}$\,\Rightarrow\,$\ref{IT_step-derivative:all-z}} and \hbox{\ref{IT_step-derivative:exists-z}$\,\Rightarrow\,$\ref{IT_step-derivative:z.h.st.}} less technical, we are going to replace the autonomous dynamical system in~$\UD$ induced by the iterates of~$\varphi$ with a non-autonomous dynamical system in~$\UD$, with the advantage that to the orbit $\big(z_{n}:= \varphi^{\circ n}(z_{0})\big)_{n\in \N}$  there would correspond the \textit{fixed point} of the new dynamical system at the origin. To this end, we employ the time-dependent change of variables in~$\UD$ given by the automorphisms $L_{n}(\zeta):=\frac{z_{n}-\zeta}{1-\overline{z_{n}}\zeta}$, ${\zeta\in \D}$,  and define $g_{n}:={L_n^{-1}\circ\varphi\circ L_{n-1}}$, ${n\in\Natural}$.
It is not difficult to see that $g_n$'s are holomorphic self-maps of~$\UD$ with a fixed point at the origin and that
\begin{equation}\label{EQ_from-g_n-to-varphi}
\varphi^{\circ n} = {L_{n}\circ g_{n}\circ g_{n-1}\circ \dots\circ g_{1}\circ L_0^{-1}},
   \quad\text{for all $~{n\in\Natural}$.}
\end{equation}
Now we consider the sequence~${(\zeta_n)\subset\UD}$ defined recursively by
$$
  \zeta_{n}:=g_{n}(\zeta_{n-1})\quad\text{for all $~{n\in\Natural}$},\qquad \zeta_{0}:=L_0^{-1}(z_{1}).
$$
Using~\eqref{EQ_from-g_n-to-varphi} we see that $L_{n}(\zeta_{n})=z_{n+1}$ for all ${n\in\Natural\cup\{0\}}$. Recalling that $\varphi$ has no fixed point in~$\D$ by the hypothesis, we have ${L_n(\zeta_n)=z_{n+1}\neq z_n=L_n(0)}$ and therefore, ${\zeta_{n}\neq 0}$ for any  ${n\in\Natural\cup\{0\}}$.

By the invariance of the hyperbolic distance~$\hd_\UD$ under automorphisms of~$\UD$, we have
\begin{equation}\label{EQ_hset-from-z-to-zeta}
\hd_{\D}\big(\varphi^{\circ (n+1)}(z_{0}), \varphi^{\circ n}(z_{0})\big)
      =\hd_{\D}(z_{n+1}, z_n)
     =\hd_{\D}\big(L_{n}(\zeta_n),\,
                  L_{n}(0) \big)=\hd_{\D}(\zeta_{n}, 0 ).
\end{equation}
Therefore, $\varphi$ is parabolic of zero hyperbolic step if and only if the sequence $(\zeta_{n})$ converges to~$0$.

Given $m>n$, consider the function $G_{n,m}:\D\to \D$ defined by
$$
  G_{n,m}:={g_{m}\circ g_{m-1}\circ \dots\circ g_{n+1}}={L_m^{-1}\circ \varphi^{\circ (m-n)}\circ L_n}.
$$
Notice that
\begin{equation}\label{Eq:step-derivative3}
G_{0,n}(0)=0\quad\text{and}\quad
G'_{0,n}(0)= \frac{(\varphi^{\circ n})'(z_{0})}{1-|\varphi^{\circ n}(z_{0})|^{2}}(1-|z_{0}|^{2})
\end{equation}
for all $n\in\Natural$.
Fixed $m>n$, by the Schwarz Lemma we have
\begin{equation}\label{Eq:step-derivative2}
|G'_{0,m}(0)|=|G'_{0,n}(0)G'_{n,m}(0)|\leq |G'_{n,m}(0)|.
\end{equation}
Moreover, since $G_{n,m}\not\in\Aut$, we can apply
the Schwarz\,--\,Pick's Lemma to the function
$$
F_{n,m}(\zeta):=
\left\{
\begin{array}{ll}
G_{n,m}(\zeta)/\zeta, & \quad \textrm{if } \ \zeta\in \D\setminus\{0\},\\[.4ex]
G_{n,m}'(0), & \quad \textrm{if } \ \zeta=0,
\end{array}
\right.
$$
to obtain
\begin{equation}\label{Eq:step-derivative}
\begin{split}
\left|G_{n,m}'(0)-\frac{\zeta_{m}}{\zeta_{n}}\right| &=\big|F_{n,m}(0)-F_{n,m}(\zeta_{n})\big|\\
&= \big|1-\overline{F_{n,m}(0)}F_{n,m}(\zeta_{n})\big|\,
 \phd_\UD\big(F_{n,m}(0),\,F_{n,m}(\zeta_{n})\big) \\[.55ex]
&\leq  \big|1-\overline{F_{n,m}(0)}F_{n,m}(\zeta_{n})\big| \, |\zeta_n |~\leq~2|\zeta_{n}|.
\end{split}
\end{equation}

It follows that
\begin{equation}\label{Eq:step-derivative4}
|G'_{0,m}(0)|\leq |G'_{n,m}(0)|\leq \left|\frac{\zeta_{m}}{\zeta_{n}}\right| +2|\zeta_{n}|.
\end{equation}

Assume now that $\varphi$ is parabolic of zero hyperbolic step. Then $(\zeta_n)$ converges to~$0$. Fix an ${\varepsilon>0}$  and choose some ${n\in\Natural}$ such that~$|\zeta_n|<\varepsilon/3$. Then by~\eqref{Eq:step-derivative4}, we have ${|G'_{0,m}(0)|<\varepsilon}$ for all ${m\in\Natural}$ large enough. Since ${\varepsilon>0}$ here is arbitrary, it follows that ${|G'_{0,m}(0)|\to0}$ as ${m\to+\infty}$ and hence, in view of~\eqref{Eq:step-derivative3}, equality~\eqref{Eq:step-derivative6} holds. This proves the implication~\hbox{\ref{IT_step-derivative:z.h.st.}$\,\Rightarrow\,$\ref{IT_step-derivative:all-z}}.

Conversely, assume that there exists $z_0\in\UD$ satisfying ${(\varphi^{\circ n})'(z_0)\neq 0}$ for all ${n\in \N}$ and such that~\eqref{Eq:step-derivative6} holds.
Then ${G'_{0,m}(0)\to0}$ as ${m\to+\infty}$.
By \eqref{Eq:step-derivative3}, $G'_{0,n}(0)\neq 0$ for all $n\in\Natural$. With the help of~\eqref{Eq:step-derivative2}, it follows that for each fixed ${n\in\Natural}$, we have ${G'_{n,m}(0)\to0}$ as ${m\to+\infty}$. Since by the Schwarz Lemma, the  sequence $\big(r_n:=|\zeta_n|\big)$ does not increase, it has a limit $\ell\in [0,1)$.  We wish to prove that ${\ell=0}$. Let us recall that ${\zeta_n\neq0}$ for all ${n\in\Natural}$. According to~\eqref{Eq:step-derivative},
$$
\left|G_{n,m}'(0)-\frac{\zeta_{m}}{\zeta_{n}}\right|
    \leq \big|1-\overline{G_{n,m}'(0)}F_{n,m}(\zeta_{n})\big| \, r_n.
$$
Passing in this inequality to the limit as ${m\to+\infty}$ with $n$ fixed, we deduce that ${\ell/r_n\leq r_n}$. That is, ${\ell\leq r_n^2}$ for all ${n\in\Natural}$. Passing now to the limit as ${n\to+\infty}$, we finally get that $\ell^2\geq \ell\in [0,1)$. Thus $\ell=0$, which according to~\eqref{EQ_hset-from-z-to-zeta} means that $\varphi$ has zero hyperbolic step. To complete the proof of the implication \hbox{\ref{IT_step-derivative:exists-z}$\,\Rightarrow\,$\ref{IT_step-derivative:z.h.st.}}, it remains to recall that a non-elliptic holomorphic self-map of zero hyperbolic step is necessarily parabolic.
\end{proof}

As a corollary of Proposition~\ref{Pro:step-derivative}, we  obtain a necessary and sufficient condition of zero hyperbolic step for continuous one-parameter semigroups. Recall that
 a family $(\phi_{t})_{t\geq 0}$  of holomorphic self-maps $\phi_{t}:\D\to \D$ is called a \dff{one-parameter semigroup}\label{PAGE_one-paremeter-semigroup} if it verifies the following two algebraic properties:
\begin{enumerate}
\item[(i)] $\phi_{0}=\id_{\D}$;
\item[(ii)] $\phi_{t+s}=\phi_{t} \circ \phi_{s}$ for every $t,s\geq 0$.
\end{enumerate}
A one-parameter semigroup $(\phi_t)$ is said to be \dff{continuous} if ${\phi_{t}\to \phi_{0}=\id_\UD}$ locally uniformly in~$\D$ as ${t\to 0^{+}}$. Finally, a one-parameter semigroup~$(\phi_t)$ is called \dff{non-trivial} if ${\phi_t\neq\id_\UD}$ for at least one value of~${t>0}$.

By a classical result of Berkson and Porta~\cite{BP}, see also \cite[\S5.4]{Abate2}, \cite[\S10.1]{BCD-Book} or \cite{BCD:Koenigs}, every continuous one-parameter semigroup~$(\phi_t)$ is differentiable in~$t$ and admits an \dff{infinitesimal generator}, i.e. a holomorphic function ${G:\UD\to\C}$ such that
\begin{equation}\label{EQ_inf-gen}
\frac{\di \phi_t(z)}{\di t}=G\big(\phi_t(z)\big)\quad\text{ for all $~{t\ge0}~$ and all $~{z\in\UD}$.}
\end{equation}
Finally, it is known that all elements of a non-trivial continuous one-parameter semigroup different from~$\id_\UD$ have the same Denjoy\,--\,Wolff point; see e.g. \cite[\S5.5]{Abate2} or \cite[Theorem~8.3.1]{BCD-Book}. Therefore, it makes sense to talk about \dff{elliptic} and \dff{non-elliptic} continuous one-parameter semigroups, depending on whether the common Denjoy\,--\,Wolff point is contained in~$\UD$ or in~$\UC$. In a similar way, see e.g. \cite[\S8.3]{BCD-Book}, non-elliptic continuous one-parameter semigroups can be categorised into three types: hyperbolic semigroups, parabolic semigroups of positive hyperbolic step, and parabolic semigroups of zero hyperbolic step.

The necessary and sufficient condition for a non-elliptic one-parameter semigroup to have zero hyperbolic step, which we are going now to deduce from Proposition~\ref{Pro:step-derivative}, has a very simple geometric meaning: the hyperbolic norm of the velocity vector ${\lambda(\phi_t(z))\big|\tfrac{\di}{\di t}\phi_t(z)\big|}$ should tend to~$0$ as ${t\to+\infty}$. Here is the precise statement:
\begin{proposition}\label{Pro:step-derivative2}
Let $(\phi_t)$ be a non-elliptic continuous one-parameter semigroup in the unit disc and let $G$ be its infinitesimal generator.  Then the semigroup $(\phi_t)$  is parabolic of zero hyperbolic step if and only if
$$
\lim_{t\to +\infty}\frac{G(\phi_t(z))}{1-|\phi_t(z)|^{2}}=0
$$
for some~--- and hence for any~--- $z\in\UD$.
\end{proposition}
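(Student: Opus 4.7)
The plan is to deduce Proposition~\ref{Pro:step-derivative2} from Proposition~\ref{Pro:step-derivative} applied to $\varphi:=\phi_1$, using the Koenigs function of the semigroup as the bridge between the infinitesimal generator $G$ and the derivatives of the iterates~$\phi_n$. I would begin by invoking the classical Berkson\,--\,Porta-type facts about non-elliptic continuous one-parameter semigroups: there exists a univalent holomorphic function $h\colon\UD\to\C$, the Koenigs function of the semigroup, satisfying Abel's equation ${h\circ\phi_t=h+t}$ for every~$t\geq 0$; each $\phi_t$ with $t>0$ is univalent; and the semigroup is parabolic of zero hyperbolic step if and only if $\phi_1$~is.

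Next I would derive the key identity. Differentiating Abel's equation with respect to~$t$ at $t=0$ yields the fundamental relation $h'(z)\,G(z)=1$ for every $z\in\UD$; in particular, $h'$~never vanishes, consistently with univalence. Differentiating instead with respect to~$z$ gives $h'(\phi_t(z))\,\phi_t'(z)=h'(z)$, and combining the two identities produces $\phi_t'(z)=h'(z)\,G(\phi_t(z))$. Dividing by ${1-|\phi_t(z)|^2}$ we arrive at
$$
\frac{\phi_t'(z)}{1-|\phi_t(z)|^2}\,=\,h'(z)\cdot\frac{G(\phi_t(z))}{1-|\phi_t(z)|^2},\qquad t\geq 0,\ z\in\UD.
$$
Since $h'(z)\neq 0$, the two sides tend to zero as ${t\to+\infty}$ simultaneously, irrespective of the choice of~$z$.

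To conclude, I would apply Proposition~\ref{Pro:step-derivative} to $\varphi=\phi_1$: since $\phi_n=\phi_1^{\circ n}$ is univalent, $\phi_n'(z)\neq 0$ for every~$n$, and the condition ${\phi_n'(z)/(1-|\phi_n(z)|^2)\to 0}$ (for some, equivalently every, $z\in\UD$) is equivalent to $\phi_1$ being parabolic of zero hyperbolic step, hence to the semigroup being so. To upgrade convergence along the integers to convergence along the reals, I would rely on the monotonicity of the hyperbolic distortion: by the Schwarz\,--\,Pick Lemma applied to~$\phi_s$, one has $\mathrm{D}_h\phi_{t+s}(z)=\mathrm{D}_h\phi_s(\phi_t(z))\,\mathrm{D}_h\phi_t(z)\leq \mathrm{D}_h\phi_t(z)$, where ${\mathrm{D}_h\phi_t(z)=(1-|z|^2)|\phi_t'(z)|/(1-|\phi_t(z)|^2)}$. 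Therefore ${t\mapsto \mathrm{D}_h\phi_t(z)}$ is non-increasing on $[0,+\infty)$, so that its convergence to~$0$ through~$\Natural$ forces convergence to~$0$ along~$[0,+\infty)$. I expect no serious obstacle in this plan: the whole reduction boils down to the displayed identity and a one-line monotonicity argument, and the only ingredient beyond Proposition~\ref{Pro:step-derivative} is the standard semigroup machinery recalled in the first paragraph.
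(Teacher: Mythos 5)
Your proposal is correct and follows essentially the same route as the paper: reduce to $\phi_1$ via Proposition~\ref{Pro:step-derivative}, use the identity $\phi_t'(z)=G(\phi_t(z))/G(z)$ to translate between $\phi_t'$ and $G$, and use monotonicity of the hyperbolic distortion $t\mapsto\mathrm D_h\phi_t(z)$ to upgrade from the discrete limit $n\to\infty$ to the continuous limit $t\to+\infty$. The only cosmetic difference is that you derive the identity $\phi_t'(z)=G(\phi_t(z))/G(z)$ by differentiating Abel's equation for the semigroup Koenigs function, whereas the paper simply cites it (noting it follows from combining the ODE and PDE forms of the generator equation); both yield the same formula.
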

\begin{proof}
Since the function $\phi_1$ is univalent, see e.g. \cite[Theorem~8.1.17]{BCD-Book}, Proposition~\ref{Pro:step-derivative} implies that the semigroup $(\phi_t)$ is parabolic of zero hyperbolic step
if and only if
\begin{equation}\label{EQ_discrete-condition}
\lim_{\Natural\ni n\to+\infty}\frac{\phi_{n}'(z)}{1-|\phi_{n}(z)|^{2}}=0
\end{equation}
for some (and hence for all) ${z\in \D}$. Using the Schwarz\,--\,Pick Lemma, it is not difficult to see that for any ${z\in\UD}$, the function
$$
  F_z(t):=\frac{\phi_{t}'(z)}{1-|\phi_{t}(z)|^{2}},\quad t\in[0,+\infty),
$$
is non-increasing. Therefore,~\eqref{EQ_discrete-condition} is equivalent to ${F_z(t)\to0}$, as ${t\to+\infty}$. To complete the proof, it only remains to notice that $\phi_{t}'(z)={G(\phi_t(z))/G(z)}$ for all ${z\in \D}$ and all~${t\ge0}$. This equality can be easily deduced by combining~\eqref{EQ_inf-gen} with the PDE ${\partial \phi_t(z)/\partial t=G(z)\phi'_t(z)}$; see e.g. \cite[Proposition~10.1.8]{BCD-Book}.
\end{proof}

\end{document}